\documentclass[11pt]{article}

\usepackage[ruled,linesnumbered]{algorithm2e}
\usepackage{float}
\usepackage{graphicx,epsfig}
\usepackage{amsmath}
\usepackage{amssymb}
\usepackage{mathrsfs}

\usepackage{amsmath,amssymb,amsfonts,amsxtra}
\usepackage{amsthm,bm}
\usepackage{graphicx,psfrag}
\usepackage{booktabs}
\usepackage{subfigure}
\usepackage{multirow}
\usepackage{url}
\usepackage{color}
\usepackage[colorlinks,linktocpage,linkcolor=blue]{hyperref}
\usepackage[margin=1.1in]{geometry}
\usepackage{tikz}
\allowdisplaybreaks

\definecolor{cob}{HTML}{0072BD}
\definecolor{coo}{HTML}{D95319}
\definecolor{cog}{HTML}{77AC30}
\definecolor{cop}{HTML}{7E2F8E}
\definecolor{cor}{HTML}{D11717}
\definecolor{coy}{HTML}{EDB120}
\definecolor{coc}{HTML}{00A9CF}

\newcommand{\coltriup}[2][cob]{%
	\begin{tikzpicture}[scale=0.3]
		\fill[#1] (0, 0) -- (1, 0) -- (0.5, 0.866) -- cycle;
	\end{tikzpicture}
}

\definecolor{darkred}{rgb}{.7,0,0}

\definecolor{green}{rgb}{0,0.7,0}

\newtheoremstyle{thmm}{1.5ex plus 1ex minus .2ex}{1.5ex plus 1ex minus.2ex}{\rmfamily}{}{\bfseries}{}{1em}{} \theoremstyle{thmm}
\newtheorem{theorem}{Theorem}[section]
\newtheorem{lemma}{Lemma}[section]

\newtheorem{remark}{Remark}[section]
\renewenvironment{proof}[1][Proof]{\noindent\textit{#1. }
}{\hfill$\square$}

\def\vertt{|\!|\!|}

\def\sign{\operatorname{sign}}

\allowdisplaybreaks

\allowdisplaybreaks[4]

\title{Spectral Method for 1-D Neutron Transport Equation\thanks{The work is supported  by  the National Key R\&D Program of China (No. 2021YFB0300203) and the National Natural Science Foundation of China (NSFC U2230402, NSFC 12471348 and NSFC 12131005).}}
\author{
	Haonan Zhang\thanks{Beijing Computational Science Research Center, Beijing 100193, China. Email: {\tt zhanghaonan@csrc.ac.cn}.},
	\and
	Huiyuan Li\thanks{Corresponding author.  State Key Laboratory of Computer Science/Laboratory of Parallel Computing,  Institute of Software, Chinese Academy of Sciences, Beijing 100190, China. Email: {\tt huiyuan@iscas.ac.cn}.},
	\and
	Zhimin Zhang\thanks{Department of Mathematics, Wayne State University, MI 48202, USA. Email: {\tt  ag7761@wayne.edu}. }
}

\date{}

\begin{document}
	
	\maketitle
	
	\vspace{-10pt}
	
	\begin{abstract}
		In this paper, we present an efficient fully spectral approximation scheme for exploring the one-dimensional steady-state neutron transport equation. Our methodology integrates the spectral-(Petrov-)Galerkin scheme in the spatial dimension with the Legendre-Gauss collocation scheme in the directional dimension. The directional integral in the original problem is discretized with Legendre-Gauss quadrature. We furnish a rigorous proof of the solvability of this scheme and, to our best knowledge, conduct a comprehensive error analysis for the first time. Notably, the order of convergence is optimal in the directional dimension, while in the spatial dimension, it is suboptimal and, importantly, non-improvable. Finally, we verify the computational efficiency and error characteristics of the scheme through several numerical examples.
		\\ 
		\noindent{\bf Key words:} Neutron transport equation, Spectral Galerkin method, Collocation, Solvability, Error estimate
		\\
		\noindent{\bf 2000 Mathematics Subject Classification:}  65N35, 35Q49, 65N12, 35B45.
		\\
	\end{abstract}
	
	\numberwithin{equation}{section}
	\section{Introduction}
	The neutron transport equation comprehensively characterizes interactions involving the movement, scattering, absorption, and leakage of neutrons. Neutrons undergo scattering with atomic nuclei in the medium, resulting in alterations to their direction and energy. Additionally, absorption by the medium contributes to a decrease in the neutron population. Neutrons may also escape system boundaries, influencing the overall transport dynamics. This equation holds fundamental significance in various domains, including nuclear reactor design, nuclear weapons design, medical radiation therapy, nuclear material detection, radiation protection, geophysical exploration, and nuclear technology research. Therefore, achieving a precise understanding and solution to the neutron transport equation is of paramount importance.
	
	Mathematically, the neutron transport equation takes the form of an integro-differential equation, involving both differential and integral operators. In this paper, we focus on the one-dimensional steady transport equation. In the steady state, the neutron population does not change over time, and it is typically assumed that the scattering of neutrons is isotropic. Based on this assumption, we formulate the steady-state neutron transport equation to describe the behavior of neutrons as follows: For $x \in D$, $\mu \in \Lambda$,
	\begin{equation}\label{steady transport}
		\begin{aligned}
			\mathcal{L}\varphi(x,\mu) := \mu\frac{\partial \varphi(x,\mu)}{\partial x}+\Sigma_{t}(x,\mu) \varphi(x,\mu) - \frac{1}{2}\int_{\Lambda} \Sigma_{s}(x,\nu,\mu) \varphi(x,\nu) \mathrm{~d} \nu = \frac{1}{2}s(x,\mu),
		\end{aligned}
	\end{equation}
	where $D \subset \mathbb{R}$ is an open interval, and we suppose $D=(-1,1)$; $\Lambda = (-1,1)$ is the set of all directions of motion, which is the projection of the three-dimensional unit sphere onto $\mathbb{R}$, $\mu$ and $\nu$, belonging to the set $\Lambda$, represent the projection of the direction of motion of neutrons in three dimensions onto the real number line $\mathbb{R}$; $\varphi(x, \mu)$ is the neutron flux density at position $x$, in direction $\mu$; $\Sigma_t(x,\mu)$ is the total macroscopic cross section at position $x$ and direction $\mu$, $\Sigma_s(x, \nu,\mu)$ is the scattering cross section at position $x$ that shifts the direction of neutron motion from $\nu$ to $\mu$, and $s(x, \mu)$ is the neutron source term at position $x$, in direction $\mu$. For neutrons, typically, the total microscopic cross section, $\Sigma_t$, is primarily comprised of the absorption reaction cross section and scattering reaction cross section $\Sigma_s$. The absorption reaction cross section is itself a sum of cross sections for various types of physical reactions, with the most significant being capture and fission. Therefore, the following assumptions are reasonable:
	there exists a constant $c$ such that both $\Sigma_{t}$ and $\Sigma_{s}$ satisfy the following conditions:	
	\begin{equation}\label{hypothesis 1}
		\Sigma_{t}(x, \mu)-\frac{1}{2}\int_{\Lambda}\Sigma_{s}(x,\nu,\mu)d\nu \geq c > 0, \quad  \forall (x,\mu) \in D \times \Lambda,
	\end{equation}
	\begin{equation}\label{hypothesis 2}
		\Sigma_{t}(x, \nu)-\frac{1}{2}\int_{\Lambda}\Sigma_{s}(x, \nu,\mu)d\mu \geq c > 0, \quad  \forall (x,\nu) \in D \times \Lambda.
	\end{equation}	
	Under the above assumptions, the neutron transport equation possesses a unique solution given the vacuum boundary condition:
	\begin{equation}\label{bc}
		\begin{aligned}
			\varphi(-1,\mu) = g_l(\mu) ,\  \mu > 0 \text{ and } \varphi(1,\mu) = g_{r}(\mu),\ \mu < 0.
		\end{aligned}
	\end{equation}
	
	The numerical solution of the neutron transport equation holds significant importance. Two primary types of numerical methods are employed for solving the neutron transport equation. The first category comprises non-deterministic methods, exemplified by the Monte Carlo method (cf. \cite{lewis1984monte, verdu1994review}). The second category encompasses deterministic methods, including the finite difference method (FDM) (cf. \cite{wang2008simulation, yamamoto2003solution}), finite element method (FEM) (cf. \cite{ abbassi2011adaptive,  asadzadeh1998finite,galliara1979finite,lesaint1974finite,martin1977finite,miller1973application}), discontinuous methods (cf. \cite{fournier2013discontinuous,machorro2007discontinuous,reed1973triangularmesh,yuan2016high}). Most of these methods are known for their relatively low accuracy and computational efficiency. 
	
	It is recognized that high-order methods, such as  spectral methods, not only possess an exponential order of convergence for infinitely smooth problems but also provide  high-order accuracy  for non-smooth or even discontinuous solutions in a suitable negative norm. Consequently, researchers have sought to harness high order methods to tackle the neutron transport equation with the goal of enhancing both accuracy and computational efficiency. 	
	
	Cardona proposed an approach in  \cite{cardona1994solution}, which expands the angular flux in the $\mu$ variable using Chebyshev polynomials. Another technique in spatial space to address high-dimensional problems is to decompose them into a series of one- or two-dimensional problems using orthogonal polynomial expansions, coupled with utilizing the discrete ordinates method in the angular direction. This approach, as demonstrated by \cite{vilhena1999solutions} and \cite{kadem2007solution}, reduces the dimensionality of the problem and has been rigorously proven to converge.
	In recent years, there has been tremendous interest in using spectral element method or spectral method in both spatial and directional variables to solve the transport equation. A Legendre polynomial approximation is employed for directional variables, while the Gauss-Lobatto Chebyshev collocation method for spatial variables is utilized to solve the one-dimensional radiative heat transfer equations in \cite{li2008iterative}. Moreover, efforts are being made in software development to solve transport equations using spectral and high-order finite element methods  \cite{ceedYD,MisunMinSEM}. To our knowledge, little literature can be found on the numerical analysis of the convergence and error estimate of a fully  spectral discretization scheme for the transport equation.
	
	The purpose of this paper is to  propose an efficient spectral approximation scheme and then provide a rigorous  error analysis for  the one-dimensional steady-state neutron transport equation. Our approach integrates the spectral (Petrov-)Galerkin method for spatial discretization with the Legendre-Gauss collocation method for directional considerations. Notably, for the integral terms of the equation, we replace continuous integrals with Legendre-Gauss numerical integrals, resulting in a fully spectral discretization.
	For our theoretical purpose, we first construct a projection and present approximation theory related to the projection error. These results serve as a foundation for rigorously proving the solvability theorem for the proposed scheme. An $L^2$-error estimate is subsequently established for our fully spectral approximation scheme, where the order of convergence is optimal in the directional dimension and suboptimal but sharp in the spatial dimension. The suboptimality in the spatial dimension arises from the intricate boundary effect left in a Galerkin approximation. On the other hand, in this particular problem, the convergence rate in the spatial dimension cannot be improved. Finally, we validate the effectiveness of our approach together with numerical analysis theory  through a series of numerical experiments. 


	The remainder of this paper is structured as follows. In Section \ref{sec2}, we present the key properties of Legendre and Jacobi polynomials, which are crucial for developing the spectral method. Section \ref{3} is devoted to deriving the variational form of the spectral method and showcasing its efficacy in solving one-dimensional transport problem. A comprehensive illustrative example is provided, along with a step-by-step elucidation of the implementation process. In Section \ref{4}, we establish error estimates for the one-dimensional transport scenario, meticulously assessing the accuracy of the spectral method. Section \ref{sec5} is dedicated to a series of numerical experiments, which align with our error analysis and demonstrate the performance and dependability of our method. The paper concludes with a summary and conclusion in Section \ref{sec6}.
	
	\section{Preliminaries}\label{sec2}
	This section provides an overview of some preliminary knowledge.
	
	\subsection{Notation}
	We  introduce some notations that will be used throughout this paper. 
	
	Let $\Omega \subset \mathbb{R}^n$ be a bounded domain and $\omega(x)$ be a weight function over $\Omega$. We denote by $\Vert \cdot \Vert_{\omega,\Omega}$  and  $(\cdot, \cdot)_{\omega,\Omega}$ the norm and  inner product in $L^2_{\omega}(\Omega)$ ,
	\begin{align*}
		\Vert u \Vert_{\omega,\Omega} = (u,u)_{\omega,\Omega}^{1/2}, \qquad (u, v)_{\omega,\Omega} = \int_{\Omega}u(x)v(x)\omega(x) dx.
	\end{align*}
	We shall omit the subscripts  $\Omega$ and/or $\omega$ (if $\omega=1$)  whenever no confusion would arise.
	
	Denote by  $P_{N}(D)$ the collection of polynomials on  $D \subset \mathbb{R}$ of degree $\leq N$.
	
	Let $\Lambda = (-1,1)$. For  anisotropic  functions defined over $D \times \Lambda$,  we simply write   $\vertt u \vertt  = \Vert u \Vert_{D\times \Lambda}$, $ \langle u, v \rangle=(u, v)_{D\times \Lambda} $, i.e., 
	\begin{equation*}
		\vertt u \vertt =  \langle u, u \rangle^{1/2}, \qquad \langle u, v \rangle = \int_{\Lambda} \int_{D} u(x,\mu) v(x,\mu) dx d\mu 
		=  \int_{\Lambda}(u(\mu), v(\mu)) d\mu,
	\end{equation*}
	hereafter, the spatial variable of a function $u$ will be dropped whenever no confusion would arise.
	
	Besides,  the following anisotropic Sobolev spaces  will be frequently used:
	\begin{align*}
		&H^{m}(\Lambda;H^{k}(D)) = \big\{ u:  \Lambda  \mapsto H^k(D)\   \big| \       \|u(\mu)\|_{H^k(D)} \in  H^m(\Lambda) \big\}, \\
		&C(\Lambda;L^{2}(D)) = \big\{ u:  \Lambda  \mapsto L^2(D)\   \big| \      \|u(\mu)\|_{L^2(D)} \in C(\Lambda)  \big\}.
	\end{align*}
	Other symbols such  as $P_{M}(\Lambda;P_{N}(D))$ and $H^{p}(\Lambda_{\nu};H^{q}(\Lambda_{\mu};H^{k}(D)))$ can be understood similarly, where $\Lambda_{\mu} = \Lambda$, $\Lambda_{\nu} = \Lambda$ stand for the directional interval with respect to variables $\mu$ and $\nu$, respectively. 
	
	\subsection{Jacobi polynomials}
	Next, we will present several essential formulas for Jacobi and Legendre functions that have been used to solve the transport problem (cf. \cite{shen2011spectral}).

	The Jacobi polynomials, denoted by $J_{n}^{\alpha,\beta}(x)$, $n=0,1,\dots$, for $\alpha,\beta>-1$, are defined as follows:
	$$J_n^{\alpha, \beta}(x) = \frac{\Gamma(n+\alpha+1)}{n ! \Gamma(n+\alpha+\beta+1)}\sum_{k=0}^{n}\binom{n}{k} \frac{\Gamma(n+k+\alpha+\beta+1)}{\Gamma(k+\alpha+1)}\Big(\frac{x-1}{2}\Big)^{k},$$
	which are mutual orthogonal over $D$ with respect to the Jacobi weight function $\omega^{\alpha,\beta}(x) := (1-x)^{\alpha} (1+x)^{\beta}$, and we define $J_{n}^{\alpha,\beta}(x) = 0 \text{, }\text{when } n < 0$. Interested readers are referred to \cite{shen2011spectral}.
	
	Specifically, let $L_{n}(x)$ be the $n$-th Legendre polynomial, which is an important special case of the Jacobi polynomials,
	\begin{align*}
		L_{n}(x) = J_{n}^{0,0}(x), \quad n \ge 0,\quad x \in D = (-1,1).
	\end{align*}
	Legendre polynomials are orthogonal with respect to the weight function $\omega^{0,0}(x) = 1$ over $D$,
	\begin{equation*}
		\int_{-1}^{1} L_n(x)L_m(x) dx = \gamma_n \delta_{m,n}, \quad \gamma_n = \| L_n \|^2 = \frac{2}{2n+1},
	\end{equation*}
	where $\delta_{m n}$ is the Kronecker delta.
	
	The Legendre polynomials and their first derivatives satisfy the following relation: 
	\begin{equation*}\label{Legendre and derivative}
		(2n+1)L_{n}(x) = L_{n+1}^{\prime}(x)-L_{n-1}^{\prime}(x), \quad  n \ge 1. 
	\end{equation*}
	
	\subsection{Useful lemmas}

		At first, we present a trace theory.
	\begin{lemma}\label{boundary error}
		Assuming $\varphi \in L_{\omega^{0,-1}}^{2}(D)$ and $ \partial_x\varphi \in L_{\omega^{1,0}}^{2}(D)$, we have following relation:
		\begin{equation}\label{right}
			\vert \varphi(1) \vert^{2} \lesssim \Vert \varphi \Vert_{  \omega^{0,-1}}\,  \| \partial_x\varphi \big\|_{ \omega^{1,0} };
		\end{equation}
		Similarly, when $\varphi \in L_{\omega^{-1,0}}^{2}(D)$  and $ \partial_x\varphi \in L_{\omega^{0,1}}^{2}(D)$, we have
		\begin{equation}\label{left}
			| \varphi(-1) |^{2} \lesssim \Vert \varphi \Vert_{  \omega^{-1,0} }\,  \| \partial_x\varphi \big\|_{ \omega^{0,1} }.
		\end{equation}
	Hereafter, the notation $A \lesssim B$ means that there exists a generic positive constant $C$, independent of $N$ and any function, such that $A\leq CB$.
	\end{lemma}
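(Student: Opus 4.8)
The plan is to establish the right-endpoint estimate \eqref{right} directly and then deduce \eqref{left} from it by the reflection $x \mapsto -x$, which exchanges the two endpoints of $D=(-1,1)$ and interchanges the weights $\omega^{0,-1}\leftrightarrow\omega^{-1,0}$ together with their companion derivative weights; hence only one of the two inequalities needs a genuine argument, and the hypotheses map to one another under the same reflection.

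For \eqref{right} the obstacle to a one-line proof is that the naive identity $|\varphi(1)|^2=\tfrac12\int_{-1}^1\partial_x\big[(1+x)\varphi^2\big]\,\mathrm{d}x$ produces, after Cauchy--Schwarz, a leftover term of the form $\Vert\varphi\Vert_{\omega^{0,-1}}^2$, i.e. a \emph{squared} norm rather than the \emph{product} $\Vert\varphi\Vert_{\omega^{0,-1}}\Vert\partial_x\varphi\Vert_{\omega^{1,0}}$ demanded by the statement. To reach a true product I would instead represent the trace through a weighted fundamental theorem of calculus. For a one-parameter family of nonnegative weights $w_\gamma$ normalized by $\int_{-1}^1 w_\gamma\,\mathrm{d}x=1$ (a convenient choice is $w_\gamma\propto(1+x)^{\gamma-1}$, which is integrable at the singular endpoint $x=-1$), one writes $\varphi(1)=\varphi(x)+\int_x^1\partial_x\varphi(t)\,\mathrm{d}t$, multiplies by $w_\gamma(x)$, integrates in $x$, and interchanges the order of integration to obtain
\begin{equation*}
\varphi(1)=\int_{-1}^1\varphi(x)\,w_\gamma(x)\,\mathrm{d}x+\int_{-1}^1\partial_x\varphi(t)\,W_\gamma(t)\,\mathrm{d}t,\qquad W_\gamma(t)=\int_{-1}^t w_\gamma,\ \ W_\gamma(-1)=0,\ W_\gamma(1)=1.
\end{equation*}

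Next I would apply the Cauchy--Schwarz inequality to each term, distributing the factor $1+x$ so that the first factor reproduces $\Vert\varphi\Vert_{\omega^{0,-1}}$ and the second is controlled by the weighted derivative norm appearing on the right-hand side of \eqref{right}. This gives a bound of the shape $|\varphi(1)|\le A(\gamma)\Vert\varphi\Vert_{\omega^{0,-1}}+B(\gamma)\Vert\partial_x\varphi\Vert_{\omega^{1,0}}$, and the decisive computation is to verify that the product $A(\gamma)B(\gamma)$ is \emph{independent} of $\gamma$ while the ratio $A(\gamma)/B(\gamma)$ sweeps all of $(0,\infty)$. Choosing $\gamma$ to balance the two contributions and invoking the arithmetic--geometric mean inequality then upgrades this sum into the desired product $|\varphi(1)|^2\lesssim\Vert\varphi\Vert_{\omega^{0,-1}}\Vert\partial_x\varphi\Vert_{\omega^{1,0}}$ with a clean constant.

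The step I expect to be delicate is analytic rather than algebraic: one must justify that $\varphi$ possesses a well-defined pointwise value at $x=1$, and that the fundamental theorem of calculus, the Fubini interchange, and the vanishing of the endpoint contribution at the singular point $x=-1$ are all legitimate under the sole integrability hypotheses $\varphi\in L^2_{\omega^{0,-1}}(D)$ and $\partial_x\varphi\in L^2_{\omega^{1,0}}(D)$. The natural remedy is to prove the identity and the estimate first for smooth $\varphi$, control the boundary term at $-1$ by the weighted integrability, and then pass to the general case by a density argument; the uniform constant furnished by the balancing step ensures the inequality survives the closure.
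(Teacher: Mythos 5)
Your reflection reduction of \eqref{left} to \eqref{right} is unobjectionable and is exactly the paper's ``similarly''. The real content of your plan, however, dies at the one computation you deferred. In the representation $\varphi(1)=\int_{-1}^{1}\varphi\, w_\gamma\,\d x+\int_{-1}^{1}\partial_x\varphi\, W_\gamma\,\d t$, the identity forces $W_\gamma(1)=\int_{-1}^{1}w_\gamma\,\d x=1$, and Cauchy--Schwarz on the second term against $\|\partial_x\varphi\|_{\omega^{1,0}}$ produces the dual factor $B(\gamma)^2=\int_{-1}^{1}W_\gamma(t)^2(1-t)^{-1}\,\d t$. Since $W_\gamma$ is continuous with $W_\gamma(1)=1$ while $(1-t)^{-1}$ is not integrable at $t=1$, this integral diverges logarithmically for \emph{every} admissible $w_\gamma$; for your choice $w_\gamma\propto(1+x)^{\gamma-1}$ one gets $W_\gamma(t)=\big(\tfrac{1+t}{2}\big)^{\gamma}$ and $B(\gamma)=\infty$ for all $\gamma>0$. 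So $A(\gamma)B(\gamma)$ is never finite, there is nothing to balance, and the AM--GM upgrade never gets off the ground. The delicate point is this weight bookkeeping at the trace endpoint, not the density and Fubini technicalities you flagged at the end.

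Moreover, the failure is not an artifact of your route: \eqref{right} as stated is false, by exactly the borderline ($H^{1/2}$-type point-evaluation) obstruction your divergent $B(\gamma)$ is detecting. Take $\varphi_\ep(x)=\min\{\ln\tfrac{2}{1-x},\,\ln\tfrac{2}{\ep}\}$: then $\varphi_\ep(1)=\ln(2/\ep)$ and $\|\partial_x\varphi_\ep\|_{\omega^{1,0}}^2=\ln(2/\ep)$, while $\|\varphi_\ep\|_{\omega^{0,-1}}$ stays bounded as $\ep\to0$ (note $\varphi_\ep(-1)=0$), so the left side of \eqref{right} grows like $\ln^{2}(2/\ep)$ against a right side of order $\ln^{1/2}(2/\ep)$. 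For comparison, the paper's own proof expands $\varphi=\sum_k\hat\varphi_k(L_k+L_{k+1})$, computes $\|\varphi\|^2_{\omega^{0,-1}}=\sum_k\tfrac{2}{k+1}\hat\varphi_k^2$, $\|\partial_x\varphi\|^2_{\omega^{1,0}}=2\sum_k(k+1)\hat\varphi_k^2$ and $\varphi(1)=2\sum_k\hat\varphi_k$, and then asserts $\big(\sum_k\hat\varphi_k\big)^2\lesssim\big(\sum_k\tfrac{2}{k+1}\hat\varphi_k^2\big)^{1/2}\big(2\sum_k(k+1)\hat\varphi_k^2\big)^{1/2}$ ``by Cauchy--Schwarz''; that step fails as well (try $\hat\varphi_k=(k+1)^{-1}$ for $k\le K$: left side $\sim\ln^{2}K$, right side $\sim\ln^{1/2}K$) --- it is the discrete incarnation of the same obstruction. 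A correct statement needs a margin: either strengthen the derivative norm to one controlling $\sum_k(k+1)^{1+\delta}\hat\varphi_k^2$ for some $\delta>0$, so Cauchy--Schwarz has a summable counterweight, or accept a logarithmic factor. In the lemma's sole application, \eqref{Value inequality} with $u=(\pi_N-I)\varphi$ and $k\ge1$, either fix is harmless --- alternatively the unweighted trace inequality $|u(\pm1)|^2\lesssim\|u\|_{L^2(D)}\|u\|_{H^1(D)}$ yields the needed $N^{1-2k}$ directly --- so the downstream rate $N^{1/2-k}$ survives up to at most a logarithm; but neither your argument nor the paper's, as written, proves the lemma.
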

	\begin{proof}
		Denote $\Phi_{k}(x) := (1+x)J_{k}^{0,1}(x) = L_{k+1}(x)+L_k(x)$, we can express any $\varphi \in L_{\omega^{0,-1}}^{2}(D)$ as the following expansion:
		$$\varphi(x) = \sum_{k=0}^{\infty} \hat{\varphi}_{k}\Phi_{k}(x).$$
		From \cite[(3.94), (3.106)]{shen2011spectral}, we know
		\begin{equation}\label{value 1}
			|\varphi(1)|^{2} = 4\big(\sum_{k=0}^{\infty}\hat{\varphi}_{k}J_{k}^{0,1}(1)\big)^{2}  = 4\big(\sum_{k=0}^{\infty}\hat{\varphi}_{k}\big)^{2}.
		\end{equation}
		By the orthogonality of $J_{n}^{\alpha,\beta}$ \cite[(3.109)]{shen2011spectral}, it follows that
		\begin{equation}\label{norm L2}
			\begin{aligned}
				\Vert \varphi \Vert_{ \omega^{0,-1} }^{2} = \int_{-1}^{1}\sum_{k=0}^{\infty} \hat{\varphi}_{k}J_{k}^{0,1}(x) \sum_{l=0}^{\infty} \hat{\varphi}_{l}J_{l}^{0,1}(x)(1+x) dx = \sum_{k=0}^{\infty}\hat{\varphi}_{k}^{2}\frac{2}{k+1}.
			\end{aligned}
		\end{equation}
		Owing to the fact that $\Phi_{k}'(x) = (k+1)J_{k}^{1,0}(x)$, it is easy to verify that
		\begin{equation}\label{norm H1}
			\begin{aligned}
				\Vert \partial_x \varphi \Vert_{ \omega^{1,0} }^{2} &= \big(\sum_{k=0}^{\infty}\hat{\varphi}_{k} \Phi_{k}',\sum_{l=0}^{\infty}\hat{\varphi}_{l} \Phi_{l}'\big)_{\omega^{1,0}} = \big(\sum_{k=0}^{\infty}\hat{\varphi}_{k} (k+1)J_{k}^{1,0},\sum_{l=0}^{\infty}\hat{\varphi}_{l} (l+1)J_{l}^{1,0}\big)_{\omega^{1,0}} \\
				&= \sum_{k=0}^{\infty}\hat{\varphi}_{k}^{2} (k+1)^{2} \frac{4}{2k+2} = 2\sum_{k=0}^{\infty} (k+1) \hat{\varphi}_{k}^{2}.
			\end{aligned}
		\end{equation}
		From \eqref{value 1}, \eqref{norm L2} and \eqref{norm H1}, and by Cauchy-Schwarz inequality, we find 
		\begin{equation}\nonumber
			\begin{aligned}
				| \varphi(1) |^{2}
				= 4\big(\sum_{k=0}^{\infty}\hat{\varphi}_{k}\big)^{2} \lesssim \big(\sum_{k=0}^{\infty}\hat{\varphi}_{k}^{2}\frac{2}{k+1}\big)^{\frac{1}{2}}\big(2\sum_{k=0}^{\infty} (k+1) \hat{\varphi}_{k}^{2}\big)^{\frac{1}{2}} =   \Vert \varphi \Vert_{  \omega^{0,-1}}\,  \| \partial_x\varphi \big\|_{ \omega^{1,0} },
			\end{aligned}
		\end{equation}
		which gives the proof of \eqref{right}. Similarly, we derive \eqref{left} and complete the proof.		
	\end{proof}

	Next, denote
	\begin{align*}
		{}^0\!P_{N}(D) = \{u \in P_{N}(D)\ \big|
		\ u(1) = 0 \} \text{, } \quad{}_0P_{N}(D) = \{u \in P_{N}(D) \  \big| \  u(-1) = 0 \}.
	\end{align*}
	We consider two special orthogonal projections: ${}_0\pi_{N}: L_{\omega^{0,-1}}^{2}(D)\mapsto {}_0P_{N}(D)$, 
	\begin{equation}\label{projection 1}
		\begin{aligned}
			(u-{}_0\pi_{N}u, v)=0,\quad \forall v \in P_{N-1}(D), 
		\end{aligned}
	\end{equation}
	and $ {}^0\pi_{N}: L_{\omega^{-1,0}}^{2}(D) \mapsto {}^0\!P_{N}(D)$, 
	\begin{equation}\label{projection 2}
		\begin{aligned}
			(u-{}^0\pi_{N}u,v)=0,\quad \forall v \in P_{N-1}(D).
		\end{aligned}
	\end{equation}
	These projections are then extended for $u \in H^1(D)$ as follows:
	\begin{align}\label{ext orth1}
			&{}_0\pi_N u  = {}_0\pi_N (u - u(-1) ) + u(-1), \\
			\label{ext orth2}
			&{}^0\pi_N u  = {}^0\pi_N (u - u(1) ) + u(1).
	\end{align} 
	For any $u \in H^{1}(D)$ and $v \in P_{N}(D)$, we have following relation,
	\begin{align}
		\label{propIN}
		\begin{split}
			&(\partial_x(u-{}_0\pi_{N}u),  v) =  (u- {}_0\pi_{N}u)(1)  v(1),
			\\
			&(\partial_x(u-{}^0\pi_{N}u),  v) =  ( {}^0\pi_{N}u - u)(-1)  v(-1).
		\end{split}
	\end{align}
	From \cite{shen2011spectral}, we deduce that ${}_0\pi_{N} = \pi_{N}^{0,-1}$ and $ {}^0\pi_{N} = \pi_{N}^{-1,0}$, and derive the following estimations for the truncation errors ${}_0\pi_{N}u - u$ and ${}^0\pi_{N}u - u$. 
	\begin{lemma}\label{error}
		For any $u \in 
		H^m(D)$, we have that for $0 \leq p \leq m \leq N+1$, and $m \geq 1$,
		\begin{equation}\label{Lemma 1}
			\begin{aligned}
				\| \partial_x^p({}_0\pi_N u-u) \|_{\omega^{p, p-1}} \lesssim N^{(p-m)}\left\|\partial_x^m u\right\|_{\omega^{m, m-1}} \lesssim N^{(p-m)}\left\|\partial_x^m u\right\|, \\
				\| \partial_x^p({}^0\pi_N u-u) \|_{\omega^{p-1, p}} \lesssim N^{(p-m)}\left\|\partial_x^m u\right\|_{\omega^{m-1, m}} \lesssim N^{(p-m)}\left\|\partial_x^m u\right\|.
			\end{aligned}
		\end{equation}
	\end{lemma}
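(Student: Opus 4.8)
The plan is to reduce both estimates to the classical approximation theory for one-dimensional Jacobi projections, exploiting the identification ${}_0\pi_N = \pi_N^{0,-1}$ and ${}^0\pi_N = \pi_N^{-1,0}$ recorded just above. First I would make this identification explicit using the orthogonal system $\Phi_k(x) = (1+x)J_k^{0,1}(x)$ introduced in the proof of Lemma~\ref{boundary error}. Since the $\{\Phi_k\}$ are mutually orthogonal in $L^2_{\omega^{0,-1}}(D)$ and each $\Phi_k$ has degree $k+1$ with $\Phi_k(-1)=0$, the functions $\Phi_0,\dots,\Phi_{N-1}$ form an orthogonal basis of ${}_0P_N(D)$. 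Writing $u = \sum_k \hat u_k \Phi_k$, the $\omega^{0,1}$-orthogonality of the $J_k^{0,1}$ shows that the unweighted condition \eqref{projection 1} forces ${}_0\pi_N u = \sum_{k=0}^{N-1}\hat u_k\Phi_k$; that is, the projection is exactly the truncation of this expansion, which coincides with the weighted orthogonal projection in $L^2_{\omega^{0,-1}}(D)$, i.e.\ the generalized Jacobi projection $\pi_N^{0,-1}$ of \cite{shen2011spectral}.

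With this identification in hand, the first inequality in each line of \eqref{Lemma 1} is a direct application of the weighted Jacobi approximation estimate of \cite{shen2011spectral}: for $\pi_N^{\alpha,\beta}$ with $(\alpha,\beta)=(0,-1)$ one has, for $0 \le p \le m \le N+1$,
\begin{equation*}
	\|\partial_x^p({}_0\pi_N u-u)\|_{\omega^{p,p-1}} \lesssim N^{p-m}\|\partial_x^m u\|_{\omega^{m,m-1}},
\end{equation*}
which is precisely the asserted bound. The parameter choice $(\alpha,\beta)=(-1,0)$ yields the second line identically; alternatively it follows from the first by the reflection $x \mapsto -x$, which interchanges the two weights, the spaces ${}_0P_N(D)$ and ${}^0P_N(D)$, and the boundary points $\pm 1$, while leaving the $L^2$-norms of derivatives invariant.

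The remaining (second) inequality in each line is then elementary: for $m \ge 1$ the weight $\omega^{m,m-1}(x) = (1-x)^m(1+x)^{m-1}$ has nonnegative exponents and is bounded on $\overline{D}$ by $2^{2m-1}$, so $\|\partial_x^m u\|_{\omega^{m,m-1}} \lesssim \|\partial_x^m u\|$ with a constant depending only on $m$; the bound for $\omega^{m-1,m}$ is the same.

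The only genuinely delicate point is the first step. Because $\beta=-1$ (resp.\ $\alpha=-1$) is inadmissible for classical Jacobi polynomials, one must argue within the generalized Jacobi framework, where $J_k^{0,-1}$ carries the vanishing factor $(1+x)$ that builds the boundary condition of ${}_0P_N(D)$ into the basis. The crux is verifying that the unweighted Galerkin-type definition \eqref{projection 1} really coincides with the $\omega^{0,-1}$-weighted truncation projection, since it is this coincidence that legitimizes the transfer to the standard estimate; fortunately it rests entirely on the orthogonality computations already carried out in Lemma~\ref{boundary error}, so no new technical machinery is needed.
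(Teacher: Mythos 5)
Your proposal follows essentially the same route as the paper, which gives no detailed proof but simply notes the identifications ${}_0\pi_N = \pi_N^{0,-1}$ and ${}^0\pi_N = \pi_N^{-1,0}$ and invokes the generalized Jacobi projection estimates of \cite{shen2011spectral}. Your argument is correct and in fact supplies the detail the paper leaves implicit, namely the verification (via the $\omega^{0,1}$-orthogonality of the basis $\Phi_k$) that the unweighted Petrov--Galerkin condition \eqref{projection 1} coincides with the weighted truncation projection, plus the trivial weight bound for the final inequality.
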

	
	Further, denote by $\mu_i,\,  0\le i\le M$  the  zeros of  the  Legendre polynomial  $L_{M+1}(\mu)$,   arranged in the ascending order such that  $-1<\mu_0 <\mu_1 < \dots <\mu_M<1$.  For any $u\in C(\Lambda)$,  we define the Gauss-Legendre  interpolation polynomial $I_{M}u \in P_{M}(\Lambda)$ of $u$ such that
	\begin{align*}
	  (I_{M}u)(\mu_i) =  u(\mu_i), \quad i=0,1,\dots,M.
	 \end{align*}	  
	 Let $\omega_{i}= \frac{2}{(1-\mu_{i}^2) [\partial_{\mu}L_{M+1}(\mu_{i})]^2} $ be  the corresponding  Christoffel numbers. We define the 
	  discrete inner product on $\Lambda$,  
	$$ (u,\phi)_{\Lambda, M}  = \sum_{i=0}^{M}u(\mu_i)\phi(\mu_i)\omega_{i}, \qquad u ,\phi\in C(\Lambda). $$ 	   
	\begin{lemma}[\cite{canuto2007spectral}]\label{difference}
		For any $u \in C(\Lambda)$ and $ \phi \in P_{M}(\Lambda)$, 
		\begin{equation}
			\left|(u, \phi)_{\Lambda}-(u, \phi)_{\Lambda, M} \right| \leq\left\|u-I_M u\right\|_{L^2(\Lambda)}\|\phi\|_{L^2(\Lambda)}.
		\end{equation}
		Further suppose  $u \in H^m(\Lambda)$ with $m \geq 1$, one has
		\begin{equation}\label{interperror}
			\| u-I_M u \|_{L^2(\Lambda)} \lesssim M^{-m} | u |_{H^m(\Lambda)}.
		\end{equation}
	\end{lemma}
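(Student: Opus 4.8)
The plan is to treat the two assertions separately: the quadrature-error bound follows directly from the exactness of the Gauss--Legendre rule, while the interpolation estimate \eqref{interperror} is reduced to a standard projection bound plus an aliasing argument.

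For the first inequality I would start from the fundamental property that the $(M+1)$-point Gauss--Legendre rule integrates every polynomial of degree at most $2M+1$ exactly, so that $(p,q)_{\Lambda,M}=(p,q)_{\Lambda}$ whenever $pq\in P_{2M+1}(\Lambda)$. Since $I_M u$ agrees with $u$ at every node $\mu_i$, the two discrete inner products coincide, $(u,\phi)_{\Lambda,M}=(I_M u,\phi)_{\Lambda,M}$; and because $I_M u,\phi\in P_M(\Lambda)$ forces $(I_M u)\phi\in P_{2M}(\Lambda)$, exactness gives $(I_M u,\phi)_{\Lambda,M}=(I_M u,\phi)_{\Lambda}$. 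Subtracting collapses the error to the continuous pairing of the interpolation remainder,
\[
(u,\phi)_{\Lambda}-(u,\phi)_{\Lambda,M}=(u-I_M u,\phi)_{\Lambda},
\]
and a single Cauchy--Schwarz step yields the claimed bound. This part is essentially mechanical once the exactness degree is pinned down.

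For the interpolation estimate I would exploit that $I_M$ reproduces polynomials of degree $\le M$, so that $I_M\Pi_M u=\Pi_M u$ for the $L^2(\Lambda)$-orthogonal projection $\Pi_M$ onto $P_M(\Lambda)$. Writing
\[
u-I_M u=(u-\Pi_M u)-I_M(u-\Pi_M u)
\]
splits the error into a projection part and an interpolation part acting on the high-frequency tail $w:=u-\Pi_M u$. The first term is handled by the classical Legendre projection bound $\|u-\Pi_M u\|_{L^2(\Lambda)}\lesssim M^{-m}|u|_{H^m(\Lambda)}$. For the second I would note that, since $(I_M w)^2\in P_{2M}(\Lambda)$ is integrated exactly, the continuous norm of $I_M w$ equals its discrete norm, $\|I_M w\|_{L^2(\Lambda)}^2=\sum_{i}w(\mu_i)^2\omega_i$, which reduces matters to controlling the nodal samples of the tail $w$.

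The main obstacle is precisely this last step: the discrete norm depends on pointwise values of $w$, which are not directly dominated by $\|w\|_{L^2(\Lambda)}$ or $|u|_{H^m(\Lambda)}$. I would resolve it through an aliasing argument. Expanding $u=\sum_{k\ge0}\hat u_k L_k$ and using exactness to identify the discrete Legendre coefficients $\tilde u_k=\gamma_k^{-1}(u,L_k)_{\Lambda,M}$ of $I_M u$, one finds for $k\le M$ that $\tilde u_k=\hat u_k+\gamma_k^{-1}\sum_{j>M}\hat u_j(L_j,L_k)_{\Lambda,M}$, so the interpolation part is governed entirely by the high-mode tail $\{\hat u_j\}_{j>M}$. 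Estimating the quadrature pairings $(L_j,L_k)_{\Lambda,M}$ via Cauchy--Schwarz in the discrete inner product and invoking the tail bound $\sum_{j>M}\gamma_j\hat u_j^2\lesssim M^{-2m}|u|_{H^m(\Lambda)}^2$ (which is the squared Legendre projection error) then controls both the truncation and the aliasing contributions by $M^{-m}|u|_{H^m(\Lambda)}$, completing the proof. Since the statement is quoted from \cite{canuto2007spectral}, in practice I would only indicate this aliasing step and defer the detailed coefficient bookkeeping to that reference.
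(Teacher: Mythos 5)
The first half of your proposal (the quadrature-error bound) is correct and is precisely the standard argument behind the cited result: nodal agreement gives $(u,\phi)_{\Lambda,M}=(I_M u,\phi)_{\Lambda,M}$, exactness of the $(M+1)$-point Gauss rule for degree $2M+1$ gives $(I_M u,\phi)_{\Lambda,M}=(I_M u,\phi)_{\Lambda}$, and Cauchy--Schwarz finishes. Note the paper itself offers no proof --- the lemma is quoted from the reference --- so the only real question is whether your argument for the second half would survive; it does not.

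The gap is in the aliasing step for the interpolation estimate \eqref{interperror}. Your reduction is fine up to the point $\|I_M w\|_{L^2(\Lambda)}^2=\sum_{i=0}^M w(\mu_i)^2\omega_i$ with $w=u-\Pi_M u$, but the proposed control of the aliasing sums does not close. The only bound discrete Cauchy--Schwarz gives on the pairings is $|(L_j,L_k)_{\Lambda,M}|\le \big(\sum_i L_j(\mu_i)^2\omega_i\big)^{1/2}\big(\sum_i L_k(\mu_i)^2\omega_i\big)^{1/2}\lesssim \gamma_k^{1/2}$ (using $|L_j|\le 1$, $\sum_i\omega_i=2$, and exactness for $L_k^2$). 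Feeding this into $\tilde u_k-\hat u_k=\gamma_k^{-1}\sum_{j>M}\hat u_j (L_j,L_k)_{\Lambda,M}$ and invoking the tail bound $\sum_{j>M}\gamma_j\hat u_j^2\lesssim M^{-2m}|u|^2_{H^m(\Lambda)}$ via Cauchy--Schwarz in $j$ produces the factor $\big(\sum_{j>M}\gamma_j^{-1}\big)^{1/2}$, which is infinite since $\gamma_j^{-1}=(2j+1)/2$. If instead you apply discrete Cauchy--Schwarz to $(w,L_k)_{\Lambda,M}$ as a whole, you merely recover $\|I_M w\|_{L^2}\lesssim (M+1)^{1/2}\big(\sum_i w(\mu_i)^2\omega_i\big)^{1/2}$, i.e.\ the quantity you were trying to estimate --- the argument is circular. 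Even upgrading to the sharper weighted tail bound $\sum_{j>M}\gamma_j j^{2m}\hat u_j^2\lesssim |u|^2_{H^m(\Lambda)}$, the crude pairing estimate only yields $O(M^{3/2-m})$, short of the claimed $O(M^{-m})$. Unlike the Fourier and Chebyshev cases, Legendre aliasing has no closed form, so the ``coefficient bookkeeping'' cannot simply be deferred to the reference --- the reference does not argue this way.

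What is actually needed, and what the proof behind the cited lemma (Bernardi--Maday) uses, are two ingredients your sketch is missing: (i) a nodal stability inequality for the Gauss points, $\sum_{i=0}^M g(\mu_i)^2\omega_i\lesssim \|g\|^2_{L^2(\Lambda)}+M^{-2}\|g'\|^2_{L^2(\Lambda)}$ for all $g\in H^1(\Lambda)$, proved by local Sobolev/trace inequalities on subintervals matched to the spacing of the nodes and the size of the Christoffel numbers; and (ii) the replacement of the bare $L^2$ projection $\Pi_M$ in the splitting by an operator $v_M\in P_M(\Lambda)$ that is simultaneously optimal in $L^2$ and $H^1$, i.e.\ $\|u-v_M\|_{L^2}+M^{-1}\|u-v_M\|_{H^1}\lesssim M^{-m}\|u\|_{H^m}$. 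The second point matters because the Legendre $L^2$ projection is suboptimal in the $H^1$ norm by a factor $M^{1/2}$, so even with the stability inequality (i), your choice $w=u-\Pi_M u$ would pollute the final rate; with both ingredients in place the estimate $\|I_M(u-v_M)\|_{L^2}^2=\sum_i (u-v_M)(\mu_i)^2\omega_i\lesssim M^{-2m}\|u\|^2_{H^m}$ follows and the proof closes.
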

	
	\section{Spectral approximation scheme and implementation}\label{3}
	In this section, we will present the variational form and algorithm for the one-dimensional neutron transport equation that correspond to the spectral method.
	
	\subsection{Approximation scheme}
	To approximate the angular direction, we will employ the discrete ordinates method as outlined in \cite{lewis1984computational}. This method entails approximating \eqref{steady transport} for a finite number of distinct angles  and subsequently applying a compatible quadrature approximation to the integral term.
	
	Firstly, the transport equation is satisfied  at the  $M+1$ Legendre-Gauss collocation points in the directional space with the Legendre-Gauss quadrature for the directional integral:  for any $  \mu \in \{ \mu_{i}, i =0, 1,\dots,M \}$,
	$$\mathcal{L}_{M} \varphi (x,\mu) := \mu \frac{\partial \varphi (x,\mu)}{\partial x}+\Sigma_{t}(x,\mu)\varphi (x,\mu)-\frac{1}{2}\sum_{i = 0}^{M}\omega_{i} \Sigma_{s}(x,\nu_{i},\mu)\varphi(x,\nu_{i}) = \frac12s(x,\mu),$$
	where $\nu_m = \mu_{m}, m =0, 1,\dots,M$.
	The  Legendre-Gauss nodes  are  symmetric around the zero, which reflects  the typical equal significance of contributions from right and left particle flows.

	Next,  define the approximation function spaces as follows:
	\begin{equation}
		\begin{aligned}
			& W_{N}^{M} = P_{M}(\Lambda;P_{N}(D)),
			\\
			& P_N^{m}(D) =  {}_0\!P_{N}(D) \text{ if } \mu_m>0 \text{ and }   P_N^{m}(D) =  {}^0\!P_{N}(D) \text{ if } \mu_m<0.
		\end{aligned}
	\end{equation}
	Then fully spectral approximation scheme with boundary conditions being  essentially treated  for the one-dimensional transport equation \eqref{steady transport} reads: to find   $\varphi_{N}^{M} \in W_{N}^{M}$ such that	
	\begin{subequations}\label{Galerkin 1D}
	\begin{align}
			&\big([\mathcal{L}_{M} \varphi_{N}^{M}](\mu_{m}), v_{N}(\mu_{m})\big) = \frac{1}{2}\big(s(\mu_{m}),v_{N} (\mu_{m})\big), \quad v_{N} \in P^m_{N},\ \  m = 0, 1, \cdots, M, 
			\label{Galerkin 1Da} \\
			&  \varphi_{N}^{M}(-1,\mu_{m}) = g_l(\mu_{m}), \quad m = \lceil (M+1)/2 \rceil, \cdots, M,   \label{Galerkin 1Db}  \\
			&  \varphi_{N}^{M}(1,\mu_{m}) = g_r(\mu_{m}), \quad m = 0, \cdots, \lfloor (M-1)/2 \rfloor. \label{Galerkin 1Dc} 
        \end{align}	
	\end{subequations}
	
	Let us introduce the following semi-discrete inner product and norm
	\begin{equation*}
		\langle u, v \rangle_{M} = \sum_{m=0}^{M}\omega_{m}(u(\mu_{m}), v(\mu_{m})), \quad \vertt u \vertt_{M} = \langle u, u \rangle_{M}^{1/2}.
	\end{equation*}
	It is then worthy to point out that  \eqref{Galerkin 1Da} is equivalent to 
	\begin{align}
	\label{Galerkin 1D1}
				&\big\langle  \mathcal{L}_{M} \varphi_{N}^{M}, v_{N}^M\big \rangle_M  = \frac{1}{2}\big \langle  I_{M}^{\mu}s,v_{N}^M  \big \rangle_M , \quad v_{N}^M \in \mathring{W}^M_{N},
	\end{align}
	where $I_{M}^{\mu}$ is the $(M+1)$-point Legendre-Gauss interpolation operator with respect to the directional variable $\mu$, and
	\begin{align*}
		\mathring W_{N}^{M} = \big\{ u \in W_N^M \  :  \ u(\mu_m) \in  P_N^{m}(D),  \, m=0,1,\dots,M \big\}. 
	\end{align*}

Before the conclusion of the current subsection, we  establish a theory on the well-posedness and uniqueness of solution for our approximation scheme.
	\begin{theorem}\label{thm 3.1}
		Let  $\Sigma_{t} \in C(\Lambda;L^{\infty}(D))$, $\Sigma_{s}\in H^{1}(\Lambda_{\nu}; H^{1}(\Lambda_{\mu}; L^{\infty}(D)))$. Under the assumptions \eqref{hypothesis 1} and \eqref{hypothesis 2},  the discrete problem \eqref{Galerkin 1D}  has a unique solution when $M$ is sufficiently large. 
		Specifically, for  \eqref{Galerkin 1D}  with the vacuum boundary conditions,   i.e.,  $\varphi_{N}^{M}\in \mathring{W}_{N}^{ M }$, it holds that
		\begin{equation}\label{ineq}
			\begin{aligned}
				\langle \mathcal{L}_{M}\varphi_{N}^{M}, \varphi_{N}^{M}\rangle_{M} \geq \frac{c}{2} \vertt \varphi_{N}^{M}\vertt^{2}+\frac{1}{2}\sum_{m=0}^{M}\omega_{m}| \mu_{m} |\big(\varphi_{N}^{M}(\sign(\mu_{m}),\mu_{m}) \big)^2.
			\end{aligned}
		\end{equation}
	\end{theorem}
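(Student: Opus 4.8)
The plan is to first prove the coercivity estimate \eqref{ineq} and then deduce existence and uniqueness from it by a finite-dimensional argument. I would expand $\langle \mathcal{L}_{M}\varphi_{N}^{M}, \varphi_{N}^{M}\rangle_{M}$ into three contributions according to the definition of $\mathcal{L}_M$: a transport part $T_1=\sum_{m}\omega_m\mu_m(\partial_x\varphi_N^M(\mu_m),\varphi_N^M(\mu_m))$, a total-cross-section part $T_2=\sum_m\omega_m(\Sigma_t(\mu_m)\varphi_N^M(\mu_m),\varphi_N^M(\mu_m))$, and a scattering part $T_3$. Throughout I would use that on $W_N^M$ the semidiscrete norm coincides with the continuous one: since $\varphi_N^M(x,\cdot)^2\in P_{2M}(\Lambda)$, the $(M+1)$-point Gauss rule is exact and hence $\vertt\varphi_N^M\vertt_M=\vertt\varphi_N^M\vertt$.

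For $T_1$ I would integrate by parts in $x$, writing $(\partial_x\varphi_N^M(\mu_m),\varphi_N^M(\mu_m))=\tfrac12[\varphi_N^M(1,\mu_m)^2-\varphi_N^M(-1,\mu_m)^2]$. Because $\varphi_N^M\in\mathring{W}_N^M$, the vacuum boundary conditions force $\varphi_N^M(-1,\mu_m)=0$ when $\mu_m>0$ and $\varphi_N^M(1,\mu_m)=0$ when $\mu_m<0$; combining both cases (the node $\mu_m=0$, if present, contributing nothing) produces exactly the boundary term $\tfrac12\sum_m\omega_m|\mu_m|\,\varphi_N^M(\sign(\mu_m),\mu_m)^2$ appearing in \eqref{ineq}.

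The heart of the proof is to show $T_2+T_3\ge\tfrac{c}{2}\vertt\varphi_N^M\vertt^2$. Using the physical nonnegativity $\Sigma_s\ge0$ together with $-\varphi_N^M(\nu_i)\varphi_N^M(\mu_m)\ge-\tfrac12[\varphi_N^M(\nu_i)^2+\varphi_N^M(\mu_m)^2]$, I would bound $T_3$ from below, split $T_2$ into two equal halves, and regroup. This produces a $\mu$-indexed sum weighted by $\Sigma_t(x,\mu_m)-\tfrac12\sum_i\omega_i\Sigma_s(x,\nu_i,\mu_m)$ and a $\nu$-indexed sum weighted by $\Sigma_t(x,\nu_i)-\tfrac12\sum_m\omega_m\Sigma_s(x,\nu_i,\mu_m)$. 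The main obstacle is that these are quadrature approximations of the quantities controlled by \eqref{hypothesis 1} and \eqref{hypothesis 2}, whose lower bound $c$ is only known for the exact integrals. I would close this gap with Lemma \ref{difference} applied with $\phi\equiv1$: the regularity $\Sigma_s\in H^{1}(\Lambda_\nu;H^{1}(\Lambda_\mu;L^{\infty}(D)))$, via the one-dimensional embedding $H^1\hookrightarrow L^\infty$ to evaluate at a node and the $L^\infty(D)$ regularity to make the bound uniform in $x$, yields a quadrature error of order $M^{-1}$ uniformly in $x$ and $m$. Hence for $M$ sufficiently large each weight stays $\ge c/2$, and \eqref{hypothesis 1}--\eqref{hypothesis 2} give $T_2+T_3\ge\tfrac{c}{4}\vertt\varphi_N^M\vertt^2+\tfrac{c}{4}\vertt\varphi_N^M\vertt^2=\tfrac{c}{2}\vertt\varphi_N^M\vertt^2$.

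Summing $T_1+T_2+T_3$ establishes \eqref{ineq}. To conclude unique solvability, I would note that \eqref{Galerkin 1D} is a square linear system, so it suffices to show its homogeneous version has only the trivial solution. Lifting the boundary data \eqref{Galerkin 1Db}--\eqref{Galerkin 1Dc} reduces the general problem to one with $\varphi_N^M\in\mathring{W}_N^M$; choosing the test function equal to $\varphi_N^M$ in \eqref{Galerkin 1D1} with vanishing data and invoking \eqref{ineq} forces $\tfrac{c}{2}\vertt\varphi_N^M\vertt^2\le0$, whence $\varphi_N^M=0$. Therefore the scheme is uniquely solvable for $M$ large enough.
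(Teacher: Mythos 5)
Your proposal is correct and follows essentially the same route as the paper's proof: the same three-term decomposition of $\langle \mathcal{L}_{M}\varphi_{N}^{M},\varphi_{N}^{M}\rangle_{M}$, the same integration by parts producing the boundary term under the vacuum conditions, and the same quadrature-error argument (equivalent to Lemma \ref{difference} with $\phi\equiv 1$) that perturbs \eqref{hypothesis 1}--\eqref{hypothesis 2} into discrete lower bounds $\geq c/2$ for $M$ sufficiently large. The only minor deviations are that you symmetrize the scattering term with Young's inequality $-ab\geq -\tfrac12(a^{2}+b^{2})$ where the paper uses a $\Sigma_{s}$-weighted Cauchy--Schwarz inequality (both arguments tacitly require $\Sigma_{s}\geq 0$), and that you conclude unique solvability by elementary finite-dimensional linear algebra where the paper appeals to Hille--Yosida theory.
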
  
	\begin{proof}Owing to the linearity of the discrete problem   \eqref{Galerkin 1D}, it suffices to prove \eqref{ineq} for $\varphi_{N}^{M}\in \mathring{W}_{N}^{ M }$.
	
		According to Hille-Yosida theory \cite{dautray1999mathematical}, it is sufficient to prove that for any $ \psi \in \mathring{W}_{N}^{ M } $,
		$$\langle \mathcal{L}_{M}\psi, \psi\rangle_{M} \geq \frac{c}{2} \| \psi \|_{L^{2}(\Lambda;L^{2}(D))}^{2}+\frac{1}{2}\sum_{m=0}^{M}\omega_{m}| \mu_{m} |\big(\psi(\sign(\mu_{m}),\mu_{m}) \big)^2.$$
		Indeed, for any $ \psi \in \mathring{W}_{N}^{ M }$, we have
		\begin{equation*}
			\begin{aligned}
				\langle \mathcal{L}_{M}\psi, \psi\rangle_{M} &= \langle \mu \partial_{x}\psi, \psi \rangle_{M} + \langle \Sigma_{t}\psi, \psi \rangle_{M} \\
				&- \frac{1}{2}\sum_{m=0}^{M}\omega_{m}\sum_{i = 0}^{M}\omega_{i} \big(\Sigma_{s}(\nu_{i},\mu_{m})\psi(\nu_{i}) , \psi(\mu_{m}) \big).
			\end{aligned}
		\end{equation*} 
		Integration by parts and according to the vacuum boundary condition,
		\begin{equation}\label{mu}
			\begin{aligned}
				\langle \mu\partial_{x}\psi, \psi \rangle_{M} = \sum_{m=0}^{M}\omega_{m}\frac{| \mu_{m} |}{2}\big(\psi(\sign(\mu_{m}),\mu_{m}) \big)^2 \geq 0.
			\end{aligned}
		\end{equation}
		Because of $\Sigma_{s} \in H^{1}(\Lambda_{\nu};H^{1}(\Lambda_{\mu};L^{\infty}(D)))$, we know that for almost everywhere $x \in D$,
		\begin{equation*}
			\begin{aligned}
				\bigg| \int_{\Lambda} \Sigma_{s}&(x,\nu,\mu) d\nu - \sum_{i=0}^{M}\omega_{i} \Sigma_{s}(x,\nu_{i},\mu) \bigg| 
				= \bigg| \int_{\Lambda} (I-I_{M}^{\nu})\Sigma_{s}(x,\nu,\mu) d\nu \bigg| \\
				\leq & 2\bigg( \int_{\Lambda} \big((I-I_{M}^{\nu})\Sigma_{s}(x,\nu,\mu)\big)^{2} d\nu \bigg)^{1/2} 
				\lesssim M^{-1} \bigg( \int_{\Lambda} \big(\partial_{\nu}\Sigma_{s}(x,\nu,\mu)\big)^{2} d\nu \bigg)^{1/2},
			\end{aligned}
		\end{equation*}
		where $I_{M}^{\nu}$ is the $(M+1)$-point Legendre-Gauss interpolation operator with respect to variable $\nu$. 
		
		According to \eqref{hypothesis 1} and \eqref{hypothesis 2}, and owing to $\Sigma_{t}(\cdot,\mu) \in L^{\infty}(D)$, we can observe that: when $M$ is sufficiently large, for $\mu \in \{\mu_{m}, m = 0, 1,\dots, M\}$ and almost everywhere $x \in D$,
		\begin{equation}\label{assume1}
			\begin{aligned}
				\Sigma_{t}&(x,\mu)-\frac{1}{2}\sum_{i=0}^{M}\omega_{i} \Sigma_{s}(x,\nu_{i},\mu) \\
				&= \Sigma_{t}(x,\mu)-\frac{1}{2} \int_{\Lambda} \Sigma_{s}(x,\nu,\mu) d\nu +  \frac{1}{2}\bigg( \int_{\Lambda} \Sigma_{s}(x,\nu,\mu) d\nu - \sum_{i=0}^{M}\omega_{i} \Sigma_{s}(x,\nu_{i},\mu) \bigg) \\
				& \geq \frac{c}{2}.
			\end{aligned}
		\end{equation}
		Similarly, it is easy to see that for $\nu \in \{ \nu_{m}, m = 0, \cdots, M\}$
		\begin{equation}\label{assume2}
			\Sigma_{t}(x,\nu)-\frac{1}{2}\sum_{i=0}^{M}\omega_{i} \Sigma_{s}(x,\nu,\mu_{i}) \geq \frac{c}{2}, \quad \text{a.e. } x \in D.
		\end{equation}
		On the other hand, by using the Cauchy-Schwarz inequality and according to \eqref{assume1} and \eqref{assume2}, we notice that 
		\begin{equation}\label{sigma}
			\begin{aligned}
				&\big|\sum_{m,i=0}^{M}\omega_{m}\omega_{i} \Sigma_{s}(x,\nu_{i},\mu_{m})\psi(x,\nu_{i})\psi(x,\mu_{m}) \big| \\
				&\leq \Big(\sum_{m, i=0}^{M}\omega_{m}\omega_{i}\Sigma_{s}(x,\nu_{i},\mu_{m})
				(\psi(x,\nu_{i}))^2 \Big)^{1/2}  \Big( \sum_{m, i=0}^{M}\omega_{m}\omega_{i}\Sigma_{s}(x,\nu_{i},\mu_{m})(\psi(x,\mu_{m}))^2 \Big)^{1/2} \\
				& \leq 2\sum_{m=0}^{M}\omega_{m}\Sigma_{t}(x,\mu_{m})(\psi(x,\mu_{m}))^{2}-c\sum_{m=0}^{M}\omega_{m}(\psi(x,\mu_{m}))^{2}, \qquad \text{a.e. } x \in D.
			\end{aligned}
		\end{equation}
		Integrating equation \eqref{sigma} over $D$ with respect to $x$, and noting $\vertt \psi \vertt_{M} = \vertt \psi \vertt$,
		\begin{equation*}
			\begin{aligned}
				\langle \mathcal{L}&_{M}\psi,\psi\rangle_{M} \geq \frac{1}{2}\sum_{m=0}^{M}\omega_{m}| \mu_{m} |\big(\psi(\sign(\mu_{m}),\mu_{m})\big)^{2} + \sum_{m=0}^{M}\omega_{m}(\Sigma_{t}(\mu_{m})\psi(\mu_{m}),\psi(\mu_{m})) \\  
				&\quad - \frac{1}{2}\big( 2\sum_{m=0}^{M}\omega_{m}(\Sigma_{t}(\mu_{m})\psi(\mu_{m}),\psi(\mu_{m}))- c \vertt \psi \vertt_{M}^2 \big) \\ 
				& \geq \frac{c}{2}\vertt \psi \vertt^{2} + \frac{1}{2}\sum_{m=0}^{M}\omega_{m}| \mu_{m} |\big(\psi(\sign(\mu_{m}),\mu_{m}) \big)^2.
			\end{aligned}
		\end{equation*} 
		Specifically, taking $\psi = \varphi_{N}^{M}$, we derive \eqref{ineq}. The proof is completed. 
	\end{proof}
	\subsection{Implementation}

	Define the basis functions $\{\Phi_{n}(x)\}_{n=0}^{N}$ on the interval $D = (-1,1)$ as follows:
	\begin{equation}\label{expand2}
		\begin{aligned}
			\begin{split}
				\Phi_{n}(x) = \Phi_{n}^{N}(x):= \left\{ 
				\begin{array}{ll}
					\frac{1-x}{2},    & n = 0, \\[0.5em]	
					\frac{1}{\sqrt{4n+2}}\big(L_{n-1}(x)-L_{n+1}(x)\big),  & 0 < n < N, \\[0.8em]	
					\frac{1+x}{2},    & n = N.
				\end{array}
				\right.
			\end{split}
		\end{aligned}
	\end{equation}
	We expand the approximation solution $\varphi_N^M \in W_N^M$ to the unknown function $\varphi(x,\mu)$ in terms of the basis function $\Phi_{n}(x), n = 0,1,\cdots, N$,
	\begin{equation}\label{expand}
		\varphi_{N}^{M}(x,\mu) := \sum_{n=0}^{N}\hat{\varphi}_{n}(\mu) \Phi_{n}(x) \in W_{N}^{M},
	\end{equation}
	with the inflow  boundary condition to be satisfied explicitly in the final algebraic system. 
	 
	Based on \eqref{Galerkin 1D}, we provide the structure of the coefficient matrix for the neutron transport equation:
	\begin{equation*}
		\begin{aligned}
			&B = diag( \mu_0\hat{B}, \mu_1\hat{B}, \cdots, \mu_M\hat{B} ), && \hat{B} := \big[\big(\partial_{x}\Phi_{j}, \Phi_{i} \big) \big]_{0 \leq i,j \leq N}, \\
			&T = diag(T_{0}, T_{1}, \cdots, T_{M} ), && T_{m} := \big[ \big(\Sigma_{t}(\mu_{m})\Phi_{j}, \Phi_{i} \big) \big]_{0 \leq i,j \leq N}, \\
			&S = [S_{m,n}]_{0 \leq m,n \leq M}, && S_{m,n} := \big[ \frac{1}{2} \omega_{n}\big(\Sigma_{s}(\nu_{n}, \mu_{m})\Phi_{j}, \Phi_{i} \big) \big]_{0 \leq i,j \leq N}, 
		\end{aligned}
	\end{equation*}
	where $T$ and $S$ are symmetric matrices. 
	
	Denote 
	\begin{equation*}
		\begin{aligned}
			&\mathbf{u} = (\hat{\varphi}_{0,0}, \hat{\varphi}_{1,0}, \cdots, \hat{\varphi}_{N,0}, \hat{\varphi}_{0,1}, \cdots, \hat{\varphi}_{N,M})^{T}, \quad \hat{\varphi}_{j,m} = \hat{\varphi}_{j}(\mu_{m}), \quad 0 \leq j \leq N, 0 \leq m \leq M, \\
			&\mathbf{f} = (f_{0,0}, f_{1,0}, \cdots, f_{N,0}, f_{0,1}, \cdots, f_{N,M})^{T}, \quad f_{i,m} = \frac{1}{2}\big(s(\mu_{m}),\Phi_{i} \big), \quad 0 \leq i \leq N, 0 \leq m \leq M.
		\end{aligned}
	\end{equation*}
	Denote $DoF=(N+1)(M+1)-2\lfloor (M+1)/2\rfloor$.
	Let $\mathcal{Q} \in \mathbb{R}^{DoF \times(N+1)(M+1)}$ be the mask matrix  is obtained by 
	removing the $ 2\lfloor (M+1)/2\rfloor$  rows associated with the inflow boundary conditions from 
	  the identity matrix of size $(N+1)(M+1)\times(N+1)(M+1)$.  Meanwhile, let $\mathcal{R} \in \mathbb{R}^{2\lfloor (M+1)/2\rfloor \times(N+1)(M+1)}$ be the complementary matrix of  $\mathcal{Q}$,  which retains only the rows  of the identity matrix corresponding to the inflow boundary condition.
	
	Then we get the matrix form of \eqref{Galerkin 1D}: 
	\begin{align*}
		&\big[\mathcal{Q} \bf A\mathcal{Q}^T\big]( \mathcal{Q}\bf{u}) = \mathcal{Q}\big [\bf{f} - A\mathcal{R}^T(\mathcal{R}u)\big], \quad A = B+T-S,
		\\
		& \mathcal{R} \mathbf{u} = ( g_r(\mu_{0}), \cdots, g_r(\mu_{\lfloor (M-1)/2 \rfloor}), g_l(\mu_{\lceil (M+1)/2 \rceil}), \cdots, g_l(\mu_{M}) )^T.
	\end{align*}
	 For example, when we take $N = 5$, $M = 3$, Figure \ref{matbc} gives the global indexing of  the unknowns, along with the corresponding mask matrices $ \mathcal{Q}$ and $ \mathcal{R}$.
	\begin{figure}[H]
		\begin{minipage}{0.5\textwidth}
		\centering
		\includegraphics[width=0.7\textwidth]{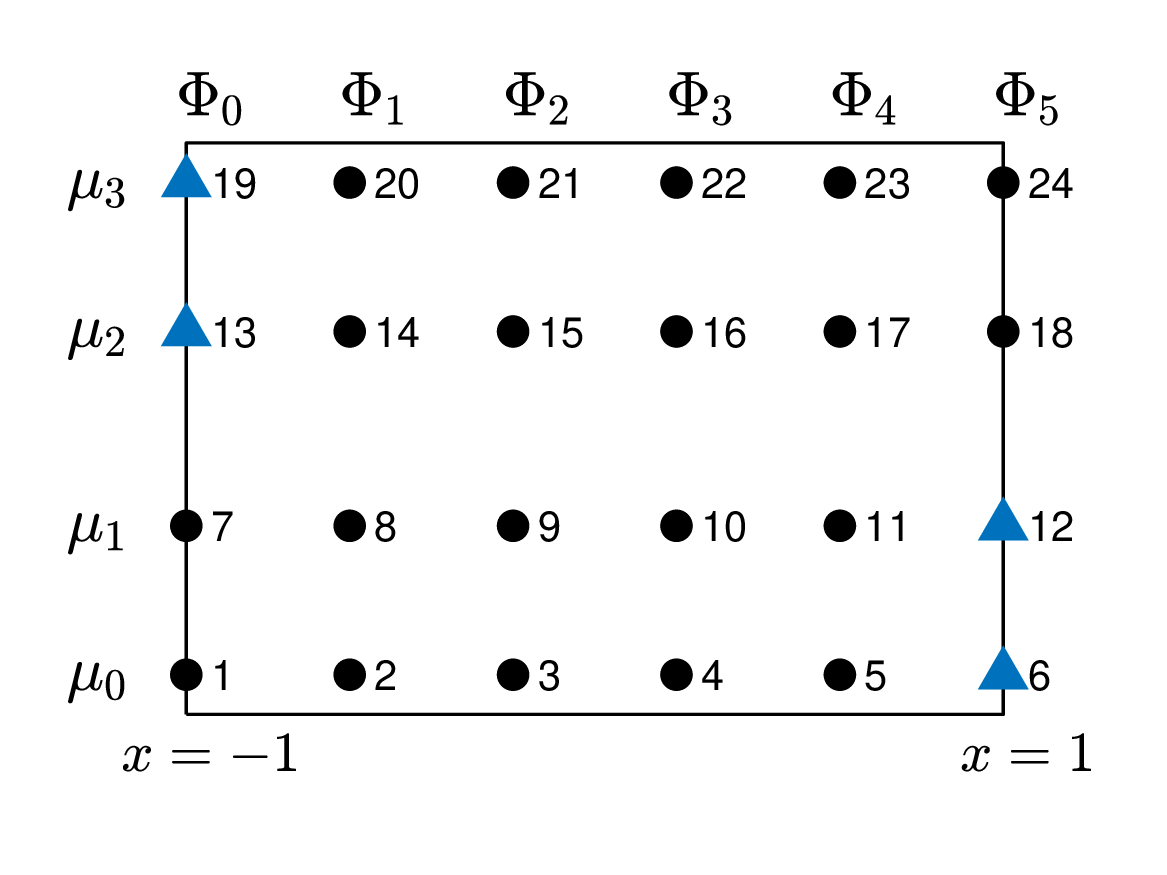}
		\end{minipage}\begin{minipage}{0.5\textwidth}
		\begin{align*}
		   &idx_{io} = [1:5,\ 7:11,\ 14:18,\ 20:24], \\
		    &idx_{bc} = [6, 12, 13, 19],
		   \\
		   &\mathcal{Q} = I_{24}(idx_{io},:), 
		   \\
		    &\mathcal{R} = I_{24}(idx_{bc},:).
		\end{align*}
		\end{minipage}
		\caption{Global indexing of  the unknowns and the  mask matrices. \coltriup[cob]: the inflow boundary condition; $\bullet$: interior and the outflow boundary.}
		\label{matbc}
	\end{figure}

	\section{Error estimate}\label{4}
	In this section, we will present the error estimate for the one-dimensional steady-state neutron transport problem.
	
	Let us first consider the orthogonal projection. When $\mu > 0$, $ \pi_{N}^\mu := {}_0\pi_{N}$ defined in \eqref{projection 1} and \eqref{ext orth1}, while when $\mu < 0$, $\pi_{N}^\mu := {}^0\pi_{N}$ defined in \eqref{projection 2} and \eqref{ext orth2}.  When there is no need to distinguish between the positive and negative values of $\mu$, we present $\pi_{N}^\mu$  simply as $\pi_{N}$.
	
	To simplify the formulas, we introduce two operators, $Q$ and $Q_{M}$:
	\begin{align*}
		&[Qf](x,\mu) = \frac12\int_{\Lambda}\Sigma_{s}(x,\nu,\mu)f(x,\nu) d\nu,\\
		&[Q_{M}f](x,\mu) = \frac12 \sum_{i=0}^{M}\omega_{i}\Sigma_{s}(x,\nu_{i},\mu)f(x,\nu_{i}) = \frac12 \int_{\Lambda}I_{M}^{\nu}\Sigma_{s}(x,\nu,\mu)I_{M}^{\nu}f(x,\nu) d\nu.
	\end{align*}
	Denote $\hat{e}_{N}^{M} = \varphi_{N}^{M}-I_{M}^{\mu}\pi_{N}\varphi \in \mathring{W}_N^M$. We derive from  \eqref{steady transport} and \eqref{Galerkin 1D1} that

	\begin{equation}
	\label{1st}
		\begin{split}
			\langle \mathcal{L}_{M}\hat{e}_{N}^{M},v_{N}^{M} \rangle_{M}
			=&\, \big[ \langle \mathcal{L}_M \varphi_N^M, v_{N}^{M} \rangle_{M} - \langle \mathcal{L} \varphi, v_{N}^{M} \rangle\big] +\big[ \langle \mathcal{L} \varphi, v_{N}^{M} \rangle  - \langle \mathcal{L}_{M}I_{M}^{\mu}\pi_{N} \varphi, v_{N}^{M} \rangle_{M}\big] \\
			=&\,\frac12\langle (I_{M}^{\mu} - I)s, v_{N}^{M} \rangle +[ \langle \mu\partial_{x}\varphi, v_{N}^{M} \rangle - \langle \mu\partial_{x}I_{M}^{\mu}\pi_{N}\varphi, v_{N}^{M} \rangle _{M} ] \\
			& +[ \langle \Sigma_{t}\varphi, v_{N}^{M} \rangle - \langle \Sigma_{t}I_{M}^{\mu}\pi_{N}\varphi, v_{N}^{M} \rangle_{M} ]  + [ \langle Q_{M}I_{M}^{\nu}\pi_{N}\varphi, v_{N}^{M} \rangle_{M} - \langle Q\varphi, v_{N}^{M} \rangle ].
		\end{split}
	\end{equation}
	Noting that  $ \langle \mu u, v\rangle_M=\langle \mu u,v \rangle$ for any $u,v\in W_N^M$ , we know from  \eqref{propIN} that
	\begin{equation}
		\begin{aligned}
			\langle \mu&\partial_{x}\varphi, v_{N}^{M} \rangle - \langle \mu\partial_{x}I_{M}^{\mu}\pi_{N}\varphi, v_{N}^{M} \rangle_M
		= \langle \mu \partial_{x}  (I-I_{M}^{\mu})  \varphi, v_{N}^{M} \rangle + \langle \mu\partial_{x}I_{M}^{\mu}(I-\pi_{N})\varphi, v_{N}^{M} \rangle_M
			 \\
			&=  \langle \mu \partial_{x}  (I-I_{M}^{\mu})  \varphi, v_{N}^{M} \rangle +\sum_{i=0}^M   \omega_i | \mu_i| \big[  v_{N}^{M} I_{M}^{\mu} (I - \pi_{N}) \varphi\big] (\sign(\mu_i),\mu_i)  ,
		\end{aligned}
	\end{equation}
	where have used that $[( \pi_N - I )\varphi] (-1,\mu_{m}) = 0$ when $\mu_{m} > 0$, and  $[( \pi_N - I )\varphi](1,\mu_{m}) = 0$ when $\mu_{m} < 0$.


	For terms that contain $\Sigma_{t}$ in \eqref{1st}, we have
	\begin{equation}
		\begin{aligned}
			\langle \Sigma&_{t}\varphi, v_{N}^{M} \rangle - \langle \Sigma_{t}I_{M}^{\mu}\pi_{N}\varphi, v_{N}^{M} \rangle_{M}
			= \langle (I-I_{M}^{\mu})(\Sigma_{t}\varphi), v_{N}^{M} \rangle + \langle \Sigma_{t}(\varphi-I_{M}^{\mu}\pi_{N}\varphi), v_{N}^{M} \rangle_{M}.
		\end{aligned}
	\end{equation}
	Similarly, 
	\begin{equation}
	\label{Qequality}
		\begin{aligned}
			\langle Q_{M}&I_{M}^{\nu}\pi_{N}\varphi, v_{N}^{M} \rangle_{M} - \langle Q\varphi, v_{N}^{M} \rangle
			= \langle Q_{M}(I_{M}^{\nu}\pi_{N}-I)\varphi, v_{N}^{M} \rangle_{M} + \langle I_{M}^{\mu}(Q_{M}\varphi) - Q\varphi, v_{N}^{M} \rangle.
		\end{aligned}
	\end{equation}
	Taking $v_{N}^{M} = \hat{e}_{N}^{M}$ in \eqref{1st}-\eqref{Qequality} and employing \eqref{ineq} along with the Cauchy-Schwarz inequality, we derive the error equation when $M$ is sufficiently large,
	\begin{equation}
	\label{L2_Bnd_Bounds}
		\begin{aligned}
			\frac{c}{2} &\vertt \hat{e}_{N}^{M} \vertt^{2} + \frac12\sum_{i=0}^{M} \omega_{i} |\mu_{i}| \, |\hat{e}_{N}^{M}(\sign(\mu_{i}),\mu_{i})|^{2}  \leq 	\langle \mathcal{L}_{M}\hat{e}_{N}^{M},\hat{e}_{N}^{M} \rangle_{M} 
			\\
			& \leq \frac12 \vertt (I_{M}^{\mu}-I)s \vertt \vertt \hat{e}_{N}^{M} \vertt + \vertt \partial_{x}(I-I_{M}^{\mu})\varphi \vertt \vertt \hat{e}_{N}^{M} \vertt \\
			&\quad + \bigg(\sum_{i=0}^{M}\omega_{i} |\mu_{i}| \, | [I_{M}^{\mu}(I - \pi_{N})\varphi] (\sign(\mu_{i}),\mu_{i})|^{2} \bigg)^{\frac12} \bigg(\sum_{i = 0}^{M} \omega_{i} |\mu_{i}| (\hat{e}_{N}^{M}(\sign(\mu_{i}),\mu_{i}))^{2}\bigg)^{\frac12}	\\
			&\quad + \vertt (I-I_{M}^{\mu})(\Sigma_{t}\varphi) \vertt \vertt \hat{e}_{N}^{M} \vertt + \vertt \Sigma_{t}(I-I_{M}^{\mu}\pi_{N})\varphi \vertt_{M} \vertt \hat{e}_{N}^{M} \vertt_{M} \\
			& \quad + \vertt Q_{M}(I_{M}^{\nu}\pi_N - I)\varphi \vertt_{M} \vertt \hat{e}_{N}^{M} \vertt_{M} + \vertt I_{M}^{\mu}(Q_{M}\varphi) - Q\varphi \vertt \vertt \hat{e}_{N}^{M} \vertt,
\\
			& \leq \frac{1}{16\varepsilon} \vertt (I_{M}^{\mu}-I)s \vertt^2 + \varepsilon \vertt \hat{e}_{N}^{M} \vertt^2 + \frac{1}{4\varepsilon} \vertt \partial_{x}(I-I_{M}^{\mu})\varphi \vertt^2 + \varepsilon \vertt \hat{e}_{N}^{M} \vertt^2 \\
			&\quad + \sum_{i=0}^{M}\omega_{i} |\mu_{i}| \, | I_{M}^{\mu}(I - \pi_{N})\varphi(\sign(\mu_{i}),\mu_{i})|^{2}   + \frac14 \sum_{i = 0}^{M} \omega_{i} |\mu_{i}| (\hat{e}_{N}^{M}(\sign(\mu_{i}),\mu_{i}))^{2} \\
			&\quad +\frac{1}{4\varepsilon} \vertt (I-I_{M}^{\mu})(\Sigma_{t}\varphi) \vertt^2 + \varepsilon \vertt \hat{e}_{N}^{M} \vertt^2 + \frac{1}{4\varepsilon} \vertt \Sigma_{t}(I-I_{M}^{\mu}\pi_{N})\varphi \vertt_{M}^2 + \varepsilon \vertt \hat{e}_{N}^{M} \vertt^2 \\
			& \quad + \frac{1}{4\varepsilon} \vertt Q_{M}(I_{M}^{\nu}\pi_N - I)\varphi \vertt_{M}^2 + \varepsilon \vertt \hat{e}_{N}^{M} \vertt^2 + \frac{1}{4\varepsilon} \vertt I_{M}^{\mu}(Q_{M}\varphi) - Q\varphi \vertt^2 + \varepsilon \vertt \hat{e}_{N}^{M} \vertt^2,	
		\end{aligned}
	\end{equation}
	where we have used  the basic inequality $ab \leq \frac{1}{4\varepsilon} a^2 + \varepsilon b^2$ for any $\varepsilon > 0$, 
	together with  $\vertt \hat{e}_{N}^{M} \vertt_{M} = \vertt \hat{e}_{N}^{M} \vertt$ for the second inequality sign.

	 Taking $\varepsilon = \frac{c}{24}$ in \eqref{L2_Bnd_Bounds}  and noting that
	\begin{align*}
			&\vertt \varphi - \varphi_{N}^{M} \vertt^2  \leq 2 \vertt \varphi - I_{M}^{\mu}\pi_{N}\varphi \vertt^2 + 2\vertt \hat{e}_{N}^{M} \vertt^2,
			\\
				 & \sum_{i = 0}^M \omega_{i} |\mu_{i}| \big( (\varphi - \varphi_{N}^{M})(\sign(\mu_{i}),\mu_{i}) \big)^2
				  \le  2\sum_{i = 0}^M \omega_{i} |\mu_{i}| \big( \hat{e}_{N}^{M} (\sign(\mu_{i}),\mu_{i}) \big)^2
				\\
				&\qquad\qquad   +  2\sum_{i = 0}^M \omega_{i} |\mu_{i}| \big( [ I_M^{\mu} (I - \pi_{N})\varphi ] (\sign(\mu_{i}),\mu_{i}) \big)^2 ,
		\end{align*} 
	we arrive at  the following estimate:
	\begin{equation}\label{2st}
		\begin{aligned}
			\frac{c}{4} \vertt & \varphi - \varphi_{N}^{M} \vertt^{2} + \frac14 \sum_{i=0}^{M} \omega_{i} |\mu_{i}| \, |[\varphi - \varphi_{N}^{M}](\sign(\mu_{i}),\mu_{i})|^{2} 
			\\
				\leq&\,  \frac3{c} \vertt (I_{M}^{\mu}-I)s \vertt^{2}  + \frac{12}c \vertt \partial_{x}(I-I_{M}^{\mu})\varphi \vertt^{2}
			+ \frac{c}2  \vertt  \varphi - I_{M}^{\mu}\pi_{N}\varphi \vertt^2 
			\\
			& + \frac{12}c \vertt (I-I_{M}^{\mu})(\Sigma_{t}\varphi) \vertt^{2} + \frac{12}c \vertt \Sigma_{t}(I-I_{M}^{\mu}\pi_{N})\varphi \vertt_{M}^{2} + \frac{12}c \vertt Q_{M}(I_{M}^{\nu}\pi_N - I)\varphi \vertt_{M}^{2} \\
			& + \frac{12}c \vertt I_{M}^{\mu}(Q_{M}\varphi) - Q\varphi \vertt^{2} + \frac52\sum_{i=0}^{M}\omega_{i} |\mu_{i}| \, | I_{M}^{\mu}(I - \pi_{N})\varphi(\sign(\mu_{i}),\mu_{i})|^{2}.
		\end{aligned}
	\end{equation}
	We divide \eqref{2st} into eight terms. From the \eqref{interperror} and Lemma \ref{error}, it is easy to show that 
	\begin{equation}\label{inequality 1}
		\begin{aligned}
			& \vertt (I_{M}^{\mu} - I)s \vertt \lesssim M^{-q}\| s \|_{H^{q}(\Lambda;L^{2}(D))}, \quad q \geq 1, \\
			& \vertt \partial_{x}(I-I_{M}^{\mu})\varphi \vertt \lesssim M^{-p}\| \varphi \|_{H^{p}(\Lambda;H^{1}(D))}, \quad p \geq 1, \\
			& \vertt  \varphi - I_{M}^{\mu}\pi_{N}\varphi \vertt  \le \vertt  (I - I_{M}^{\mu})\varphi \vertt +  \vertt (I_{M}^{\mu}-I)(I - \pi_{N})\varphi \vertt+ \vertt (I - \pi_{N})\varphi \vertt,
			\\
			& \quad \lesssim M^{-\sigma} \| \varphi \|_{H^{\sigma}(\Lambda;L^2(D))}+ M^{-p}N^{-1} \| \varphi \|_{H^{p}(\Lambda;H^{1}(D))}
			+ N^{-k} \| \varphi \|_{L^{2}(\Lambda;H^{k}(D))}, \quad k \geq 1.
		\end{aligned}
	\end{equation}
	When $\Sigma_{t}, \varphi \in H^{\sigma}(\Lambda;L^2(D))$, $\sigma \geq 1$, we can derive from the \eqref{interperror} that
	\begin{equation}
		\begin{aligned}
			\vertt (I-I_{M}^{\mu})(\Sigma_{t}\varphi) \vertt & \lesssim M^{-\sigma} \| \Sigma_{t}\varphi \|_{H^{\sigma}(\Lambda;L^2(D))} \\
			&\lesssim M^{-\sigma} \| \Sigma_{t} \|_{H^{\sigma}(\Lambda;L^2(D))} \| \varphi \|_{H^{\sigma}(\Lambda;L^2(D))}.
		\end{aligned}
	\end{equation}
	From the assumption $\Sigma_{t} \in C(\Lambda;L^{\infty}(D))$, we know that for almost everywhere $x \in D$ and any $\mu \in \Lambda$, $|\Sigma_{t}(x,\mu)| \leq \|\Sigma_{t} \|_{C(\Lambda;L^{\infty}(D))}$, and due to Lemma \ref{error}, we derive
	\begin{equation}
		\begin{aligned}
			\vertt \Sigma_{t}&(I-I_{M}^{\mu}\pi_{N})\varphi \vertt_{M} \leq \|\Sigma_{t}\|_{C(\Lambda;L^{\infty}(D))} \vertt (I-I_{M}^{\mu}\pi_{N})\varphi \vertt_{M}  \\
			&= \|\Sigma_{t}\|_{C(\Lambda;L^{\infty}(D))} \vertt (I_{M}^{\mu}-I_{M}^{\mu}\pi_{N})\varphi \vertt \\
			&\leq \|\Sigma_{t}\|_{C(\Lambda;L^{\infty}(D))} (\vertt (I-I_{M}^{\mu})(I - \pi_{N})\varphi \vertt + \vertt (I-\pi_{N})\varphi \vertt ) \\
			&\lesssim \|\Sigma_{t}\|_{C(\Lambda;L^{\infty}(D))}(M^{-p}N^{-1} \| \varphi \|_{H^{p}(\Lambda;H^{1}(D))} + N^{-k} \| \varphi \|_{L^{2}(\Lambda;H^{k}(D))} ), \quad p,k \geq 1.
		\end{aligned}
	\end{equation}
	And when $\Sigma_{s} \in H^{1}(\Lambda_{\nu};H^{1}(\Lambda_{\mu};L^{\infty}(D)))$, it is obvious that for almost everywhere $x \in D$ and all $\mu \in \Lambda$, $\nu \in \Lambda$, $| \Sigma_{s}(x,\mu,\nu) | \leq \| \Sigma_{s} \|_{C(\Lambda_{\nu};C(\Lambda_{\mu};L^{\infty}(D)))}$. So when $p,k \geq 1$, we have
	\begin{equation}
		\begin{aligned}
			\vertt &Q_{M}(I_{M}^{\nu}\pi_{N}-I)\varphi \vertt_{M}^{2} = \vertt Q_{M}(I_{M}^{\nu}\pi_{N}-I_{M}^{\nu})\varphi \vertt_{M}^{2} \\
			&\leq \frac12 \sum_{m = 0}^{M} \omega_{m} \int_{D} \big(\sum_{i=0}^{M}\omega_{i} | \Sigma_{s}(x,\nu_{i},\mu_{m}) |\, | ( I_{M}^{\nu}\pi_{N}-I_{M}^{\nu})\varphi(x,\nu_{i}) | \big)^{2}dx \\
			&\lesssim \| \Sigma_{s} \|_{C(\Lambda_{\nu};C(\Lambda_{\mu};L^{\infty}(D)))}^{2} \| I_{M}^{\nu}(\pi_{N}-I)\varphi \|_{L^{2}(\Lambda;L^{2}(D))}^{2} \\
			&\lesssim \| \Sigma_{s} \|_{C(\Lambda_{\nu};C(\Lambda_{\mu};L^{\infty}(D)))}^{2} (\| (I_{M}^{\nu}-I)(\pi_{N}-I)\varphi \|_{L^{2}(\Lambda;L^{2}(D))}^{2} + \| (\pi_{N}-I)\varphi \|_{L^{2}(\Lambda;L^{2}(D))}^{2}) \\
			&\lesssim \| \Sigma_{s} \|_{C(\Lambda_{\nu};C(\Lambda_{\mu};L^{\infty}(D)))}^{2}(M^{-2p}N^{-2} \| \varphi \|_{H^{p}(\Lambda;H^{1}(D))}^{2} + N^{-2k} \| \varphi \|_{L^{2}(\Lambda;H^{k}(D))}^{2} ).
		\end{aligned}
	\end{equation}
	When $\Sigma_{s} \in H^{\sigma}(\Lambda_{\nu};L^2(\Lambda_{\mu};L^{2}(D))) \cap L^{2}(\Lambda_{\nu};H^{\gamma}(\Lambda_{\mu};L^{2}(D))) \cap H^{\sigma'}(\Lambda_{\nu};H^{\gamma'}(\Lambda_{\mu};L^{2}(D)))$, $\varphi \in H^{\max\{\sigma,\sigma'\}}(\Lambda;L^{2}(D))$, and $\sigma, \gamma, \sigma', \gamma' \geq 1$, it can easily be shown that
	\begin{equation}
		\begin{aligned}
			\vertt I_{M}^{\mu}&(Q_{M}\varphi) - Q\varphi \vertt^{2} = \frac12 \int_{\Lambda} \int_{D} \big(\int_{\Lambda} (I_{M}^{\mu}I_{M}^{\nu}-I)(\Sigma_{s}(x,\nu,\mu)\varphi(x,\nu))  d\nu\big)^{2} dx d\mu \\
			& \lesssim \int_{\Lambda} \int_{D} \big(\int_{\Lambda} (I_{M}^{\nu}-I)((I_{M}^{\mu}-I)\Sigma_{s}(x,\nu,\mu)\varphi(x,\nu))  d\nu\big)^{2} dx d\mu \\
			&\quad + \int_{\Lambda} \int_{D} \big(\int_{\Lambda} ((I_{M}^{\mu}-I)\Sigma_{s}(x,\nu,\mu)\varphi(x,\nu))  d\nu\big)^{2} dx d\mu \\
			& \quad + \int_{\Lambda} \int_{D} \big(\int_{\Lambda}  (I_{M}^{\nu}-I) (\Sigma_{s}(x,\nu,\mu)\varphi(x,\nu))  d\nu\big)^{2} dx d\mu \\
			&\lesssim M^{-2(\sigma'+\gamma')} \| \Sigma_{s} \|_{  H^{\sigma'}(\Lambda_{\nu};H^{\gamma'}(\Lambda_{\mu};L^{2}(D)))  }^{2} \| \varphi \|_{H^{\sigma'}(\Lambda;L^{2}(D))}^{2} \\
			& \quad + M^{-2\sigma} \| \Sigma_{s} \|_{  H^{\sigma}(\Lambda_{\nu};L^2(\Lambda_{\mu};L^{2}(D)))  }^{2} \| \varphi \|_{H^{\sigma}(\Lambda;L^{2}(D))}^{2} \\
			&\quad+ M^{-2\gamma} \| \Sigma_{s} \|_{L^{2}(\Lambda_{\nu};H^{\gamma}(\Lambda_{\mu};L^{2}(D)))}^{2} \| \varphi \|_{L^{2}(\Lambda;L^{2}(D))}^{2}, 
		\end{aligned}
	\end{equation}
	where the last inequality is derived by using  the error estimate \eqref{interperror} and the Cauchy-Schwarz inequality.
	%
	
	Finally, from Lemma \ref{error}, it is obvious that for $\mu \in \{ \mu_{m}, m = \lceil (M+1)/2 \rceil, \dots, M \}$,
	\begin{equation}\nonumber
		\Vert (\pi_{N}^{\mu}-I)\varphi(\mu) \Vert_{\omega^{0,-1}(D)} \lesssim N^{-k} \Vert \varphi(\mu) \Vert_{H^{k}(D)}, \quad	\| \partial_x (\pi_{N}^{\mu}-I)\varphi(\mu) \|_{\omega^{1,0}(D)} \lesssim N^{1-k} \Vert \varphi(\mu) \Vert_{H^{k}(D)},
	\end{equation}
	for $\mu \in \{ \mu_{m}, m = 0, 1, \dots, \lfloor (M-1)/2 \rfloor \}$,
	\begin{equation*}
		\Vert (\pi_{N}^{\mu}-I)\varphi(\mu) \Vert_{\omega^{-1,0}(D)} \lesssim N^{-k} \Vert \varphi(\mu) \Vert_{H^{k}(D)}, \quad	\| \partial_x (\pi_{N}^{\mu}-I)\varphi(\mu) \|_{\omega^{0,1}(D)} \lesssim N^{1-k} \Vert \varphi(\mu) \Vert_{H^{k}(D)}.
	\end{equation*}
	From Lemma \ref{boundary error}, it is easy to see that 
	\begin{equation*}
		\begin{aligned}
			| (\pi_{N}\varphi - \varphi)(-1,\mu) |^{2}+| (\pi_{N}\varphi - \varphi)(1,\mu) |^{2} \lesssim N^{1-2k} \Vert \varphi(\mu) \Vert_{H^{k}(D)}^{2}.
		\end{aligned}
	\end{equation*}
	Thus we derive the estimate of the last term of \eqref{2st}:
	\begin{equation}\label{Value inequality}
		\begin{aligned}
			\sum_{i=0}^{M}&\omega_{i} |\mu_{i}| \,|[ I_{M}^{\mu}(I - \pi_{N})\varphi] (\sign(\mu_{i}),\mu_{i})|^{2} \\ 
			&\lesssim \int_{\Lambda} \Big(| [(I_{M}^{\mu}-I)(\pi_{N}-I)\varphi] (-1,\mu)|^{2} + | [(\pi_{N}-I)\varphi](-1,\mu)|^{2} \\
			&\quad + | [(I_{M}^{\mu}-I)(\pi_{N}-I)\varphi](1,\mu)|^{2} + | [(\pi_{N}-I)\varphi](1,\mu)|^{2} \Big) d\mu \\	
			&\lesssim M^{-2p}N^{-1} \| \varphi \|_{H^{p}(\Lambda;H^{1}(D))}^{2}	+ N^{1-2k} \| \varphi \|_{L^{2}(\Lambda;H^{k}(D))}^{2}.
		\end{aligned}
	\end{equation}
	
	Finally, we deduce from \eqref{inequality 1} to \eqref{Value inequality} immediately that for $\sigma', \gamma',p,q,k,\sigma,\gamma \geq 1$, and $M$ being sufficiently large,
	\begin{equation}\label{eN}
		\begin{aligned}
			& c\| \varphi - \varphi_{N}^{M} \|_{L^{2}(\Lambda;L^{2}(D))}^2 + \sum_{i=0 }^{M}\omega_{i}|\mu_{i}|\big( [\varphi - \varphi_{N}^{M}] (\sign(\mu_{i}),\mu_{i})\big)^{2}  \\ 
			&\lesssim c(M^{-2\sigma} \| \varphi \|_{H^{\sigma}(\Lambda;L^2(D))}^2 + M^{-2p}N^{-2} \| \varphi \|_{H^{p}(\Lambda;H^{1}(D))}^2 + N^{-2k} \| \varphi \|_{L^{2}(\Lambda;H^{k}(D))}^2 )  \\
			& \ \, +N^{1-2k} \| \varphi \|_{L^{2}(\Lambda;H^{k}(D))}^2 + M^{-2p}N^{-1}\| \varphi \|_{H^{p}(\Lambda;H^{1}(D))}^{2} + \frac1c \Big( M^{-2p}\| \varphi \|_{H^{p}(\Lambda;H^{1}(D))}^{2} \\
			&\ \, + M^{-2\sigma} \| \Sigma_{t} \|_{H^{\sigma}(\Lambda;L^2(D))}^{2} \| \varphi \|_{H^{\sigma}(\Lambda;L^2(D))}^{2} + M^{-2\sigma} \| \Sigma_{s} \|_{  H^{\sigma}(\Lambda_{\nu};L^2(\Lambda_{\mu};L^{2}(D)))  }^{2} \| \varphi \|_{H^{\sigma}(\Lambda;L^{2}(D))}^{2} \\
			& \ \, + M^{-2(\sigma'+\gamma')} \| \Sigma_{s} \|_{  H^{\sigma'}(\Lambda_{\nu};H^{\gamma'}(\Lambda_{\mu};L^{2}(D)))  }^{2} \| \varphi \|_{H^{\sigma'}(\Lambda;L^{2}(D))}^{2} \\
			& \ \,
			+ M^{-2\gamma} \| \Sigma_{s} \|_{L^{2}(\Lambda_{\nu};H^{\gamma}(\Lambda_{\mu};L^{2}(D)))}^{2} \| \varphi \|_{L^{2}(\Lambda;L^{2}(D))}^{2} + M^{-2q} \| s \|_{H^{q}(\Lambda;L^{2}(D))}^{2}  \Big). 
		\end{aligned}
	\end{equation} 
	We are now in a position to provide the final error estimate based on the above analysis, Lemma \ref{error} and \eqref{interperror}. 
	\begin{theorem}\label{thm1}
		Let $\varphi$ and $\varphi_{N}^{M}$ be the solutions of \eqref{steady transport} and \eqref{Galerkin 1D} with the boundary condition \eqref{bc}, respectively, $M$ is sufficiently large. When $\Sigma_{t} \in H^{\sigma}(\Lambda;L^2(D))$ $\cap$ $ C(\Lambda;L^{\infty}(D))$, $\Sigma_{s} \in H^{\sigma}(\Lambda_{\nu};L^2(\Lambda_{\mu};L^{2}(D)))$ $\cap$ $L^{2}(\Lambda_{\nu};H^{\gamma}(\Lambda_{\mu};L^{2}(D)))$ $\cap$ $ H^{\sigma'}(\Lambda_{\nu};H^{\gamma'}(\Lambda_{\mu};L^{2}(D)))$, $s \in H^{q}(\Lambda;L^{2}(D))$, and under the assumptions \eqref{hypothesis 1} and \eqref{hypothesis 2}, if $\varphi \in H^{\max\{\sigma,\sigma'\}}(\Lambda;L^{2}(D))$ $\cap$ $L^{2}(\Lambda;H^{k}(D))$ $\cap$ $H^{p}(\Lambda;H^{1}(D))$, $p,q,k,\sigma,\gamma,\sigma',\gamma' \geq 1$, we have 
		\begin{equation}\label{ERROR}
			\begin{aligned}
				\sqrt{c} \| &\varphi - \varphi_{N}^{M} \|_{L^{2}(\Lambda;L^{2}(D))} + \Big( \sum_{i=0 }^{M}\omega_{i}|\mu_{i}|\big( [\varphi - \varphi_{N}^{M}] (\sign(\mu_{i}),\mu_{i})\big)^{2} \Big)^{1/2} \\ 
				& \lesssim N^{1/2-k} \| \varphi \|_{L^{2}(\Lambda;H^{k}(D))} + \frac{1}{\sqrt{c}} \Big( M^{-p}\| \varphi \|_{H^{p}(\Lambda;H^{1}(D))} \\
				&\quad  + \big( \| \Sigma_{t} \|_{H^{\sigma}(\Lambda;L^2(D))} + \| \Sigma_{s} \|_{  H^{\sigma}(\Lambda_{\nu};L^2(\Lambda_{\mu};L^{2}(D)))  } + c \big) M^{-\sigma} \| \varphi \|_{H^{\sigma}(\Lambda;L^{2}(D))}  \\
				& \quad  + M^{-(\sigma'+\gamma')} \| \Sigma_{s} \|_{  H^{\sigma'}(\Lambda_{\nu};H^{\gamma'}(\Lambda_{\mu};L^{2}(D)))  } \| \varphi \|_{H^{\sigma'}(\Lambda;L^{2}(D))}   \\
				&\quad + M^{-\gamma} \| \Sigma_{s} \|_{L^{2}(\Lambda_{\nu};H^{\gamma}(\Lambda_{\mu};L^{2}(D)))} \| \varphi \|_{L^{2}(\Lambda;L^{2}(D))} + M^{-q} \| s \|_{H^{q}(\Lambda;L^{2}(D))} \Big), 
			\end{aligned}
		\end{equation} 
		where $c$ is the constant in assumptions \eqref{hypothesis 1} and \eqref{hypothesis 2}.
	\end{theorem}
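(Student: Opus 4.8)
The plan is to obtain \eqref{ERROR} directly from the consolidated bound \eqref{eN}, since essentially all the analysis has already been performed: the error equation \eqref{1st} (assembled from \eqref{steady transport} and \eqref{Galerkin 1D1}), the discrete coercivity \eqref{ineq} of Theorem \ref{thm 3.1}, and the term-by-term interpolation and projection estimates \eqref{inequality 1}--\eqref{Value inequality} have all been combined into \eqref{eN}. Its left-hand side is exactly the square of the quantity bounded in \eqref{ERROR}, so the only remaining task is to pass from squared norms to norms. I will also carry along the requirement that $M$ be sufficiently large, inherited from Theorem \ref{thm 3.1} and from the derivation of \eqref{L2_Bnd_Bounds}.

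First I would take square roots of \eqref{eN}, using the subadditivity $\sqrt{\sum_j a_j}\le\sum_j\sqrt{a_j}$ on the right and $\sqrt{a+b}\ge\tfrac1{\sqrt2}(\sqrt a+\sqrt b)$ on the left. Each summand $M^{-2r}\|\cdot\|^2$ then becomes the first-power factor $M^{-r}\|\cdot\|$ appearing in \eqref{ERROR}, with every Sobolev norm of $\Sigma_t,\Sigma_s,s,\varphi$ carried over unchanged. The split $\sqrt c$ on the left versus $1/\sqrt c$ on the right is pure bookkeeping: the summands in \eqref{eN} carry a prefactor $c$, no factor, or a factor $1/c$, and the square root converts these into $\sqrt c$, $1$, and $1/\sqrt c$. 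In particular, the ``$+c$'' inside the bracket multiplying $M^{-\sigma}\|\varphi\|_{H^\sigma(\Lambda;L^2(D))}$ in \eqref{ERROR} comes from the prefactor-$c$ term $c\,M^{-2\sigma}\|\varphi\|_{H^\sigma(\Lambda;L^2(D))}^2$ in \eqref{eN}, which itself is the $M^{-\sigma}$ contribution of the retained projection term $\tfrac c2\vertt\varphi-I_M^\mu\pi_N\varphi\vertt^2$ of \eqref{2st} after inserting \eqref{inequality 1}.

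Second, I would consolidate the spatial contributions. Among the $\|\varphi\|_{L^2(\Lambda;H^k(D))}^2$ terms, \eqref{eN} contains both $N^{-2k}$ and $N^{1-2k}$; since $N^{1-2k}$ dominates $N^{-2k}$ for $N\ge1$, only the single spatial rate $N^{1/2-k}$ survives, and the subdominant $\sqrt c\,N^{-k}$ piece is absorbed into it. This is precisely the suboptimal-but-sharp spatial order flagged in the introduction: the lost half order is traceable to the boundary term $|(\pi_N\varphi-\varphi)(\pm1,\mu)|^2$, which the trace inequality of Lemma \ref{boundary error} bounds only by $N^{1-2k}\|\varphi(\mu)\|_{H^k(D)}^2$ rather than by the optimal $N^{-2k}$.

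I do not foresee a substantive obstacle; the theorem is in essence a repackaging of \eqref{eN}, and the only care needed is clerical. One must match each norm to the correct anisotropic space and regularity index among $p,q,k,\sigma,\gamma,\sigma',\gamma'$, and check that the smoothness hypotheses imposed on $\Sigma_t,\Sigma_s,s$ and $\varphi$ are exactly those under which \eqref{inequality 1}--\eqref{Value inequality} were established, so that no new assumption is introduced. Collecting the resulting terms then reproduces \eqref{ERROR}.
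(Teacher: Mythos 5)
Your proposal is correct and follows essentially the same route as the paper: the paper likewise treats the analysis culminating in \eqref{eN} as the substance of the proof and obtains \eqref{ERROR} from it by taking square roots, tracking the powers of $c$ (prefactor $c$, $1$, or $1/c$ becoming $\sqrt{c}$, $1$, or $1/\sqrt{c}$), and absorbing the subdominant contributions (e.g.\ $\sqrt{c}\,N^{-k}$ into $N^{1/2-k}$, and $\sqrt{c}\,M^{-p}N^{-1}$ into $M^{-p}/\sqrt{c}$). Your identification of the boundary-trace term from Lemma \ref{boundary error} as the source of the suboptimal spatial rate, and of the retained projection term $\tfrac{c}{2}\vertt \varphi - I_{M}^{\mu}\pi_{N}\varphi \vertt^2$ as the origin of the ``$+c$'' inside the bracket, also matches the paper's accounting.
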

	\begin{remark}
		The first term, $ N^{1/2-k} \| \varphi \|_{L^{2}(\Lambda;H^{k}(D))} $, on the right side of \eqref{ERROR} indicates that our estimate is  suboptimal in the space. However, as demonstrated in the numerical example 7 in section \ref{5.7}, this estimate is sharp.
		The second term, $M^{-p}\| \varphi \|_{H^{p}(\Lambda;H^{1}(D))} $, reflects the requirement for the regularity of the projection.
	\end{remark}
	
	\section{Numerical results}\label{sec5}
	In this section, we present numerical scalar flux results and $L^{2}$-error results for various model problems in one-dimensional settings, aiming to showcase the accuracy of the spectral method. The scalar flux is defined as the integral of $\varphi$ over all directions: 
	$$u(x) = \int_{\Lambda} \varphi(x, \mu)d\mu,\quad u_{N}^M(x) = \sum_{i = 0}^{M}\omega_{i} \varphi_{N}^{M}(x, \mu_{i}), \quad x \in D.$$
	Firstly, we consider two one-dimensional slab problems, both of which have exact solutions expressed explicitly. 
	\subsection{Example 1}
	In this example, we thoroughly examine the convergence accuracy of the proposed spectral method by considering a slab with vacuum boundary condition. The problem is characterized by the following specifications:
	$$D = (0, 1), \quad \Sigma_{t} = 1 ,\quad \Sigma_{s} = 0.2 ,\quad s(x,\mu) = 2[(3x^2-12x^3+15x^4-6x^5) \mu ] + 2(\Sigma_{t}-\Sigma_{s})x^3(1-x)^3,$$
	The exact solution of the linear transport equation is 
	$$\varphi(x,\mu) = x^3(1-x)^3.$$
	In this study, we employ a fully spectral discretization scheme to solve the equation. 
	The numerical results are presented in Figure \ref{ex4N20}, where the solid line represents the flux of the exact solution for Example 1, and the scattered points represent the numerical flux of our spectral method
	by using a polynomial  of  degree $N=10$ in spatial space  and   of degree $M = 11$ in angular space.
	\begin{figure}[H]
		\centering
		\subfigure[Numerical flux with $N=10$ and $M=11$ and exact flux.]{
			\begin{minipage}[t]{0.45\linewidth}
				\centering
				\includegraphics[width=\linewidth]{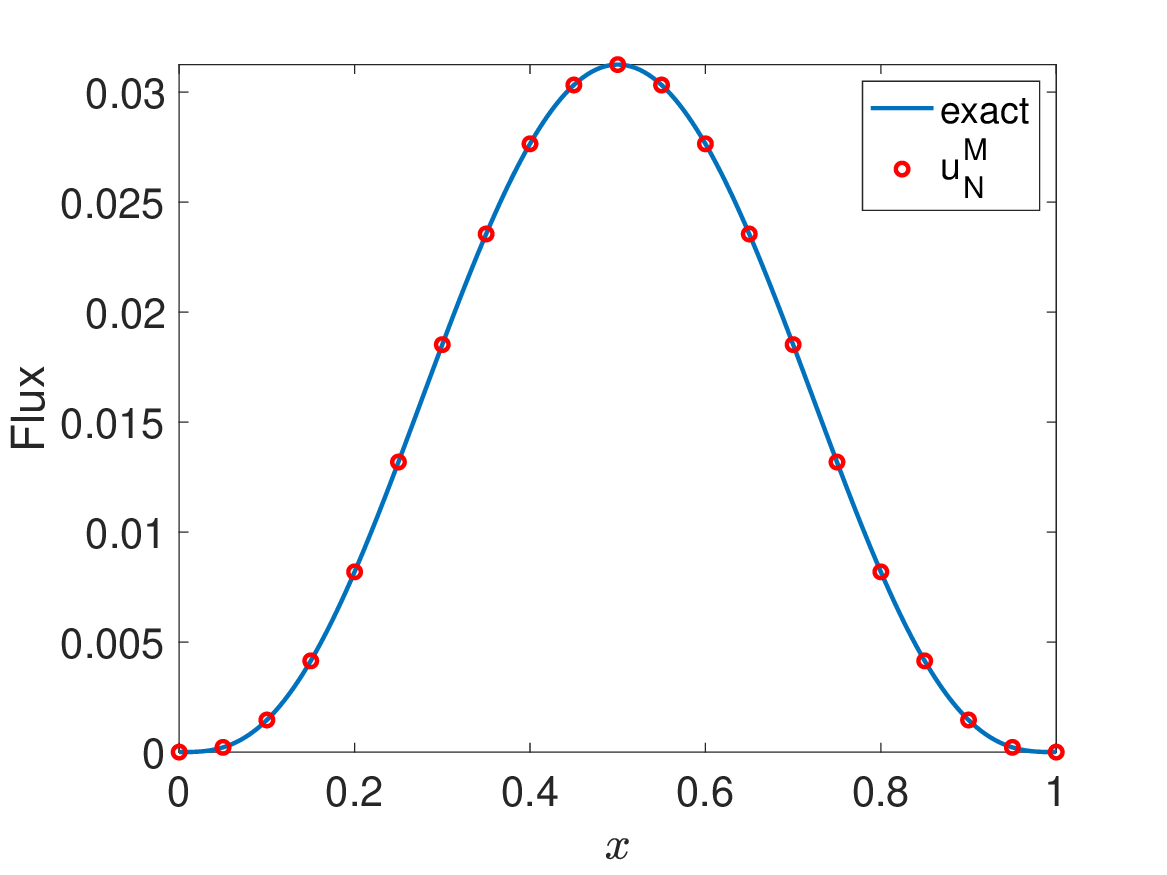}
				\label{ex4N20}
			\end{minipage}
		}
		\hspace{0.6em}
		\subfigure[$L^1$-errors of flux evaluated by the HWENO method.]{
			\begin{minipage}[t]{0.45\linewidth}
				\centering
				\includegraphics[width=\linewidth]{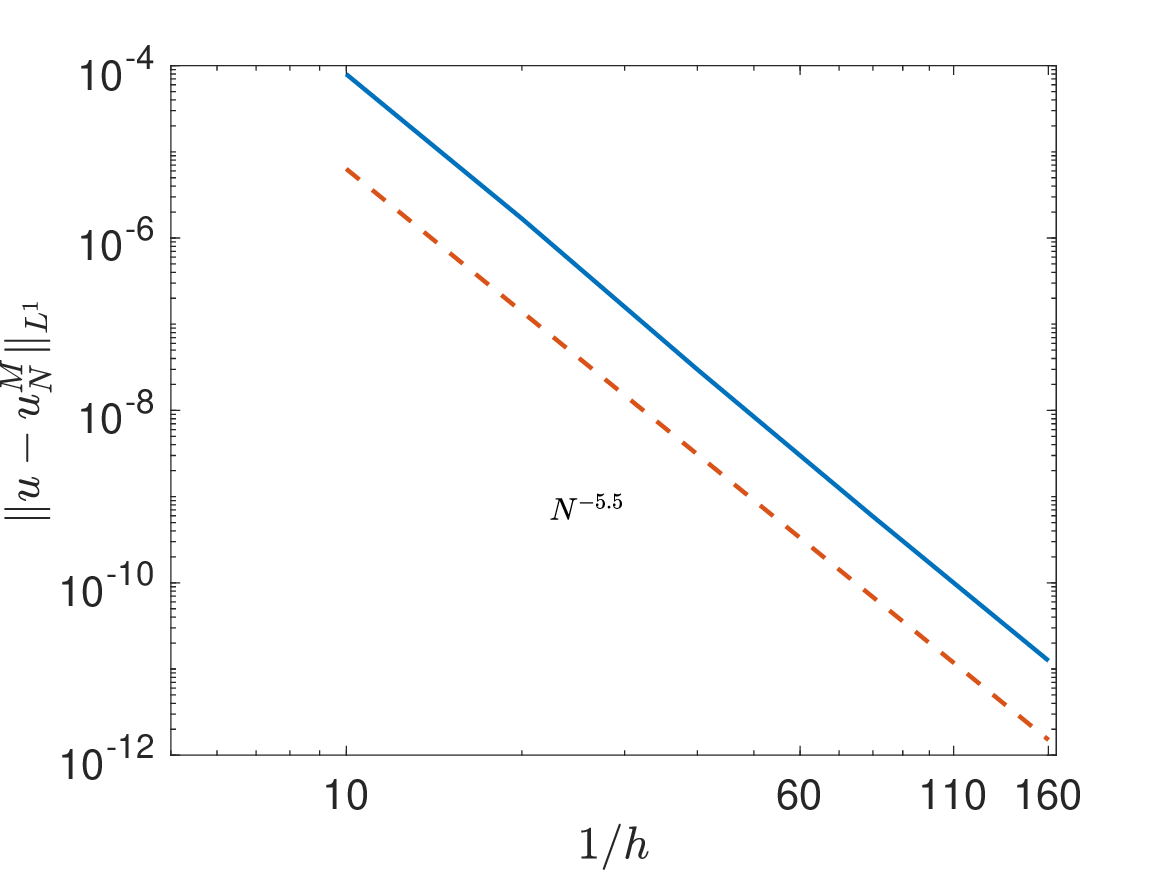} 
				\label{errorex1HWENO}
			\end{minipage}
		}
		\caption{The flux of Example 1 for spectral method and $L^1$-errors of flux evaluated by the HWENO method.}
		\label{figex4}
	\end{figure}
	While Figure \ref{errorex1HWENO} illustrates the relationship between the $L^1$-errors and $1/h$ for Example 1 using the high order HWENO (Hermite weighted essentially non-oscillatory) method with $M = 11$, where $h$ is the spatial size. The error data,  which is obtained from the research work presented in \cite{ren2022high},  roughly exhibits a convergence order  of $\mathcal{O}(h^{5.5})$.
	
	The results demonstrate the convergence of the proposed spectral method in approximating the neutron transport equation. The scattered points closely align with the solid line, indicating that the numerical solution effectively converges to the exact solution as the degree of the polynomial increases.
	
	We present the $L^2$-errors between the numerical flux of Example 1 and the exact solution flux in Figure \ref{errorex4}. Figure \ref{errorex4N} illustrates the $L^2$-errors with respect to the spatial approximation polynomial degree $N$ at $M=11$, while Figure \ref{errorex4M} displays the $L^2$-errors with respect to the angular discretization number $M+1$ at $N=30$.
	\begin{figure}[H]
		\centering
		\subfigure[Numerical errors $\Vert u-u_{N}^{M} \Vert_{L^{2}}$ vs. $N$ $(M=11)$.]{
			\begin{minipage}[t]{0.45\linewidth}
				\centering
				\includegraphics[width=\linewidth]{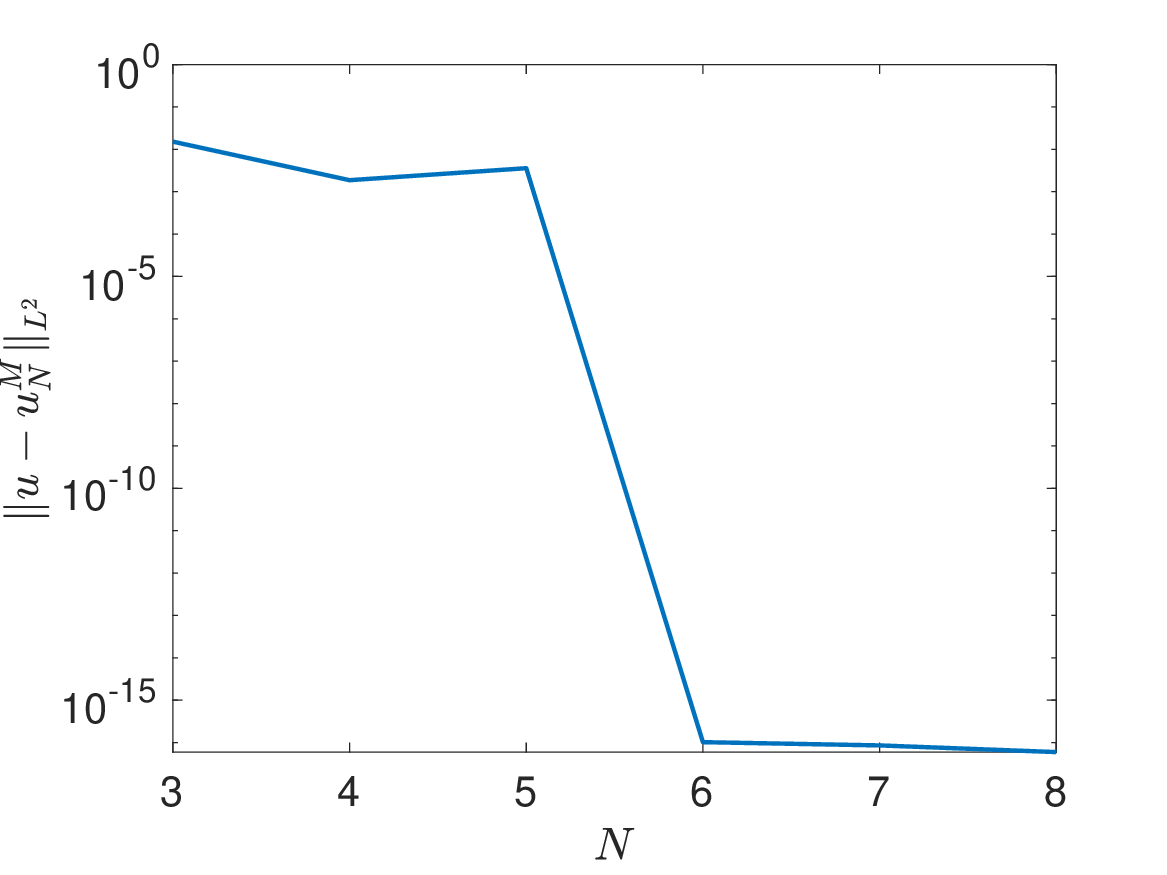}
				\label{errorex4N}
			\end{minipage}
		}
		\hspace{0.6em}
		\subfigure[Numerical errors $\Vert u-u_{N}^{M} \Vert_{L^{2}}$ vs. $M+1$ $(N=30)$.]{
			\begin{minipage}[t]{0.45\linewidth}
				\centering
				\includegraphics[width=\linewidth]{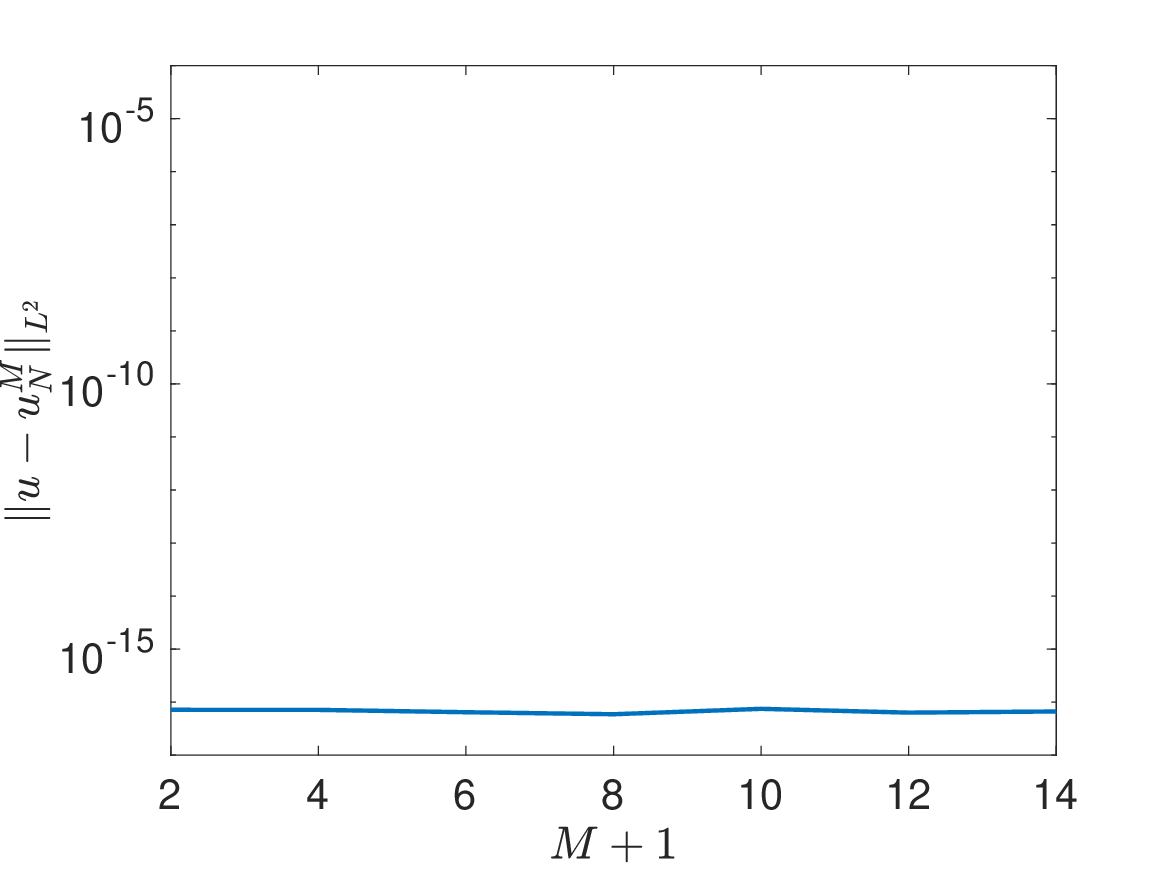}
				\label{errorex4M}
			\end{minipage}
		}
		\caption{The $L^{2}$-errors versus $N$ and $M+1$ of Example 1 using the spectral method.}
		\label{errorex4}
	\end{figure}
	From Figure \ref{errorex4N}, it is evident that the proposed method exhibits spectral accuracy concerning the polynomial degree $N$. As the degree $N$ increases to 6, the $L^2$-errors approach the machine epsilon, indicating the convergence of the numerical solution to the exact solution. Similarly, in Figure \ref{errorex4M}, it can be observed that when the angular discretization number $M+1$ is small (e.g., $M=1$), the $L^2$-errors also approach the machine epsilon. This occurs because the exact solution is a sixth-degree polynomial solely in the spatial variable $x$ and is entirely independent of the angular variable $\mu$. As a result, selecting $N = 6$ and $M = 1$ enables the $L^2$-error to reach machine epsilon.
	
	Furthermore, by comparing Figure \ref{errorex1HWENO} and Figure \ref{errorex4N}, it is evident that for problems with a higher regularity of the exact solution, the spectral method requires fewer degrees of freedom compared to the Hermite WENO fast sweeping method to achieve the same level of accuracy.
	\subsection{Example 2}
	In this test, we solve the absorbing-scattering transfer problem described by the equation \eqref{steady transport} with 
	$$\Sigma_{t} = 22000, \quad \Sigma_{s} = 1,$$ 
	$$s(x,\mu) = -4 \pi \mu^{3} \cos^{3}\pi x \sin\pi x + \Sigma_{t}(\mu^{2}\cos^{4}\pi x + const) - \Sigma_{s}(const + \frac{\cos^{4}\pi x}{3}).$$
	Here $const = 10^{-14}$ is a small positive constant which is used to ensure the source term to be nonnegative. The computational domain is $D = (0,1)$. The boundary condition is given as follows 
	\begin{equation*}
		\left\{
		\begin{aligned}
			\varphi(0,\mu) &= \mu^{2} + const, \quad \text{if } \mu > 0, \\
			\varphi(1,\mu) &= \mu^{2} + const, \quad \text{if } \mu < 0.
		\end{aligned}
		\right. 
	\end{equation*}
	For this problem, we have the exact solution given as \cite{yuan2016high} 
	$$\varphi(x,\mu) = \mu^{2}\cos^{4}\pi x + const.$$
	
	\begin{figure}[H]
		\centering
		\subfigure[Flux with $N=10$ and $M=11$.]{
			\begin{minipage}[t]{0.45\linewidth}
				\centering
				\includegraphics[width=\linewidth]{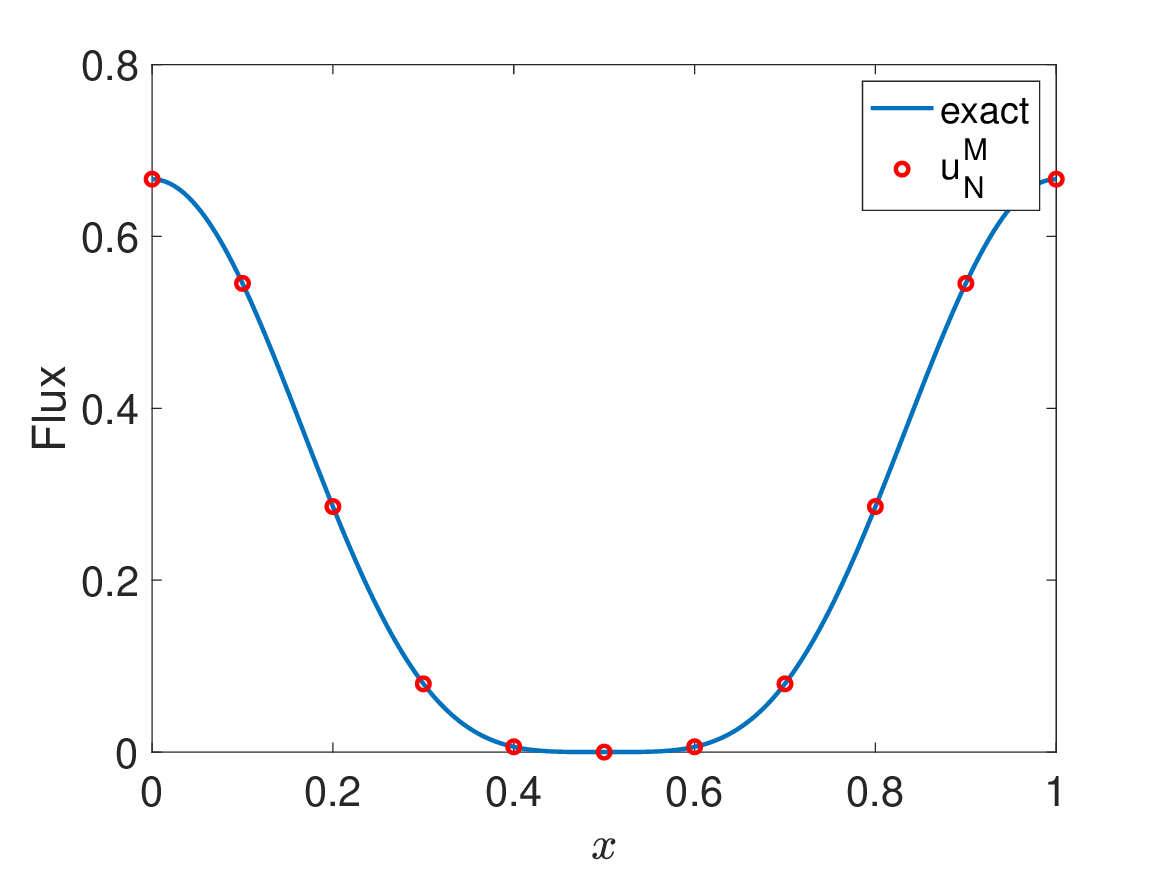}
				\label{ex3N10}
			\end{minipage}
		}
		\subfigure[$L^2$-errors of the $P_1$ DG scheme with the positivity-preserving limiter.]{
			\begin{minipage}[t]{0.45\linewidth}
				\centering
				\includegraphics[width=\linewidth]{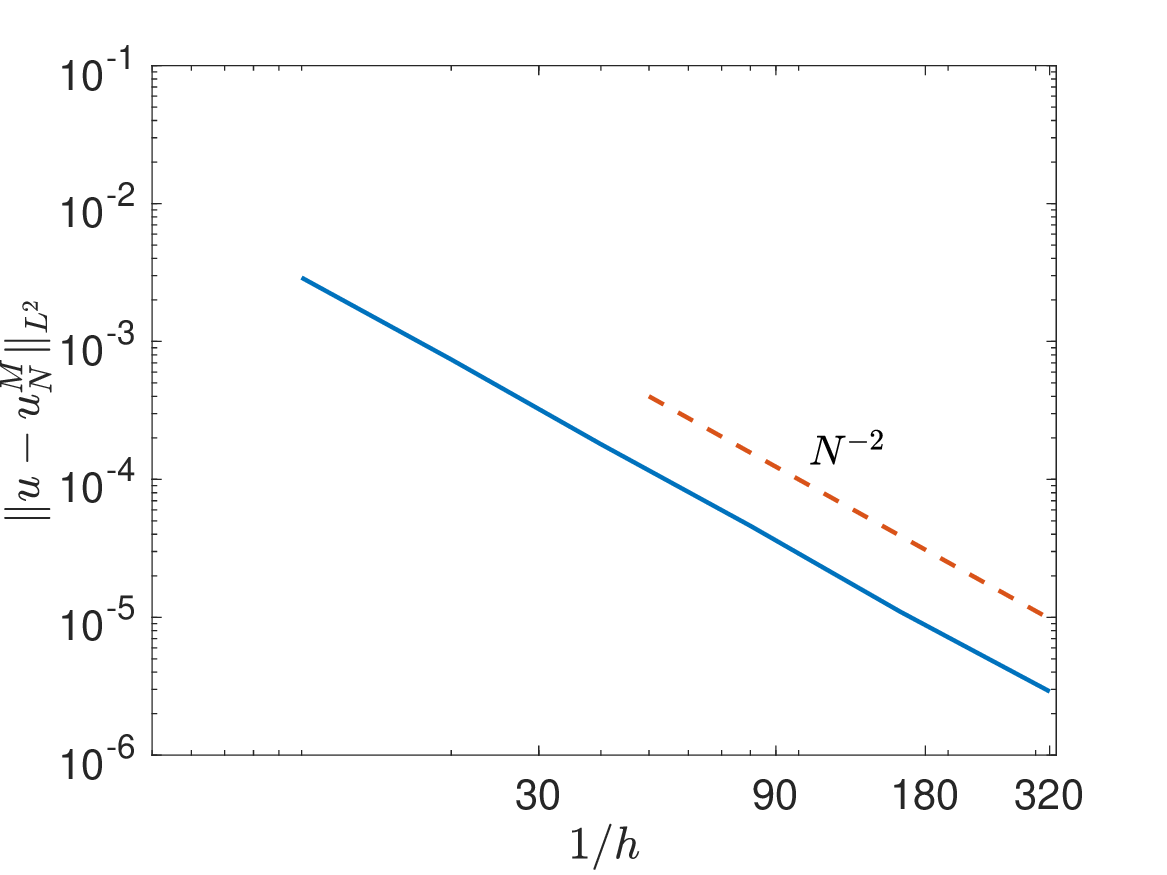}
				\label{ex3DG}
			\end{minipage}
		}
		\caption{The flux of Example 2 obtained using the spectral method and $L^2$-errors of $P_{1}$ DG scheme with the positivity-preserving limiter.}
		\label{figex3}
	\end{figure}
	\begin{figure}[H]
		\centering
		\subfigure[Numerical errors $\Vert u-u_{N}^{M} \Vert_{L^{2}}$ vs. $N$ $(M=11)$.]{
			\begin{minipage}[t]{0.45\linewidth}
				\centering
				\includegraphics[width=\linewidth]{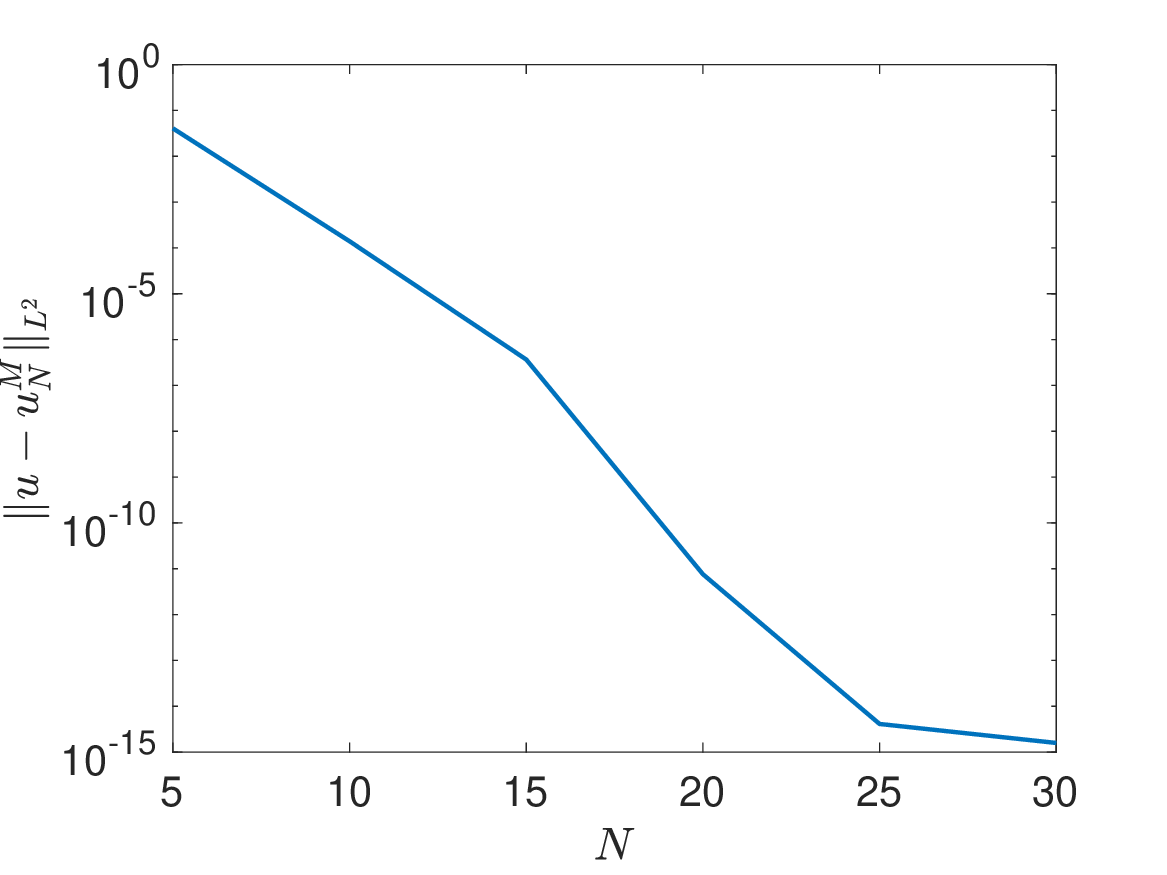}
				\label{errorex3N}
			\end{minipage}
		}
		\subfigure[Numerical errors $\Vert u-u_{N}^{M} \Vert_{L^{2}}$ vs. $M+1$ $(N=30)$.]{
			\begin{minipage}[t]{0.45\linewidth}
				\centering
				\includegraphics[width=\linewidth]{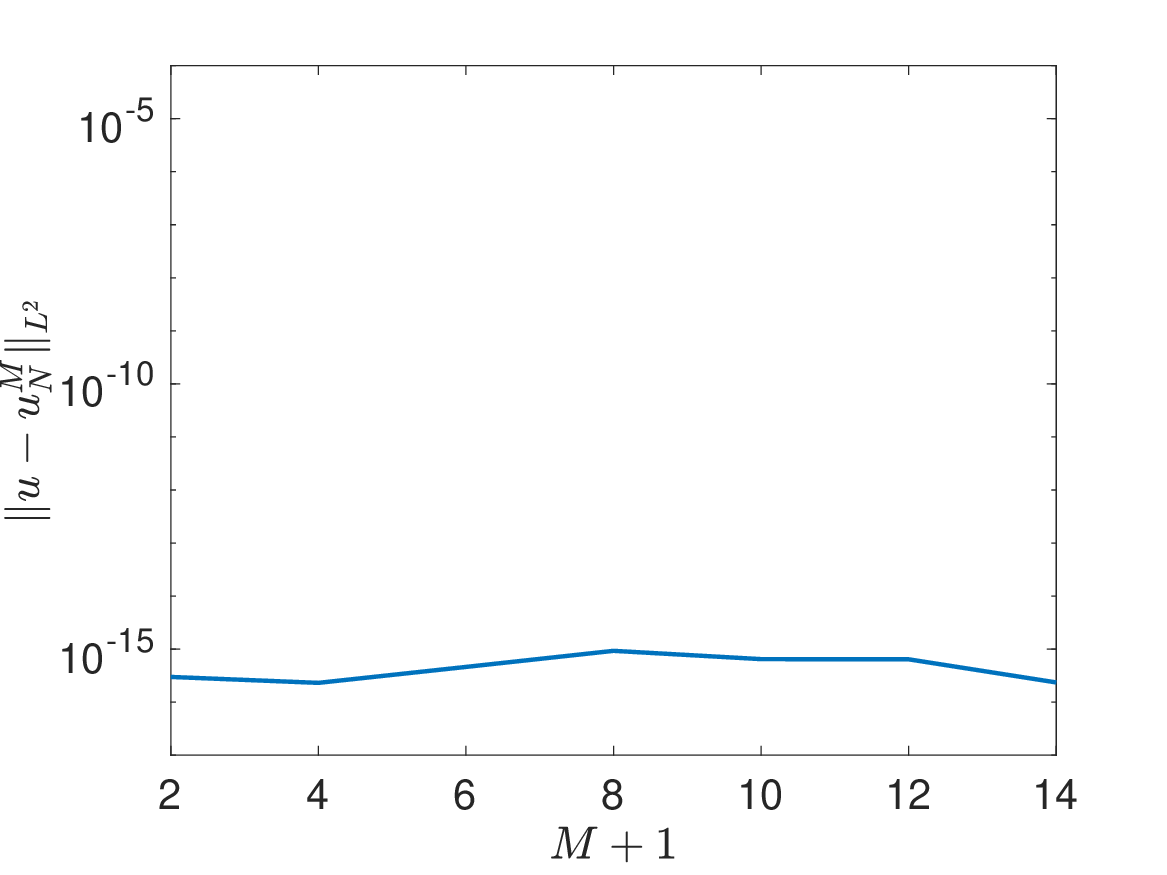}
				\label{errorex3M}
			\end{minipage}
		}
		\caption{The $L^{2}$-errors versus $N$ and $M+1$ of Example 2 for the spectral method.}
		\label{errorex3}
	\end{figure}
	Figure \ref{figex3} consists of two subplots. In the left subplot, the flux of the exact solution and the numerical solution obtained using the spectral method are depicted with the polynomial degree $N = 10$ and the  angular discretization   parameter  $M = 11$. The right subplot presents the $L^2$-errors of the $P_{1}$ DG scheme with the positivity-preserving limiter for Example 2, as obtained from the research in \cite{yuan2016high}.
	
	We present the $L^2$-error between the numerical flux and the exact solution flux of Example 2 in Figure \ref{errorex3}. Figure \ref{errorex3N} displays the $L^2$-errors with respect to the spatial approximation polynomial degree $N$ at $M=11$, while Figure \ref{errorex3M} illustrates the $L^2$-errors with respect to the angular discretization number $M+1$ at $N=30$. 
	
	From Figure \ref{errorex3N}, it is evident that our method exhibits spectral accuracy with respect to the polynomial degree $N$. As $N$ increases to 25, the $L^2$-errors approach the machine epsilon, indicating the convergence of the numerical solution to the exact solution. Similarly, from Figure \ref{errorex3M}, it can be observed that when the angular discretization parameter $M$ is small, the $L^2$-errors also approach the machine epsilon. This is because the exact solution itself is a trigonometric function in the spatial variable $x$ and a polynomial of degree 2 in the direction variable $\mu$, so when $N$ reaches 25 and $M$ reaches 2, the $L^2$-errors reach machine epsilon.
	
	By comparing Figure \ref{ex3DG}  and Figure \ref{errorex3N}, it becomes evident that, under the same degree of freedom, the spectral method achieves higher accuracy than the low order DG scheme.
	\subsection{Example 3}
	Let us consider an example of a non-vacuum boundary condition, the setup of the problem takes the form
	$$D = (0,1), \quad \Sigma_{t}=100, \quad \Sigma_{s}=99.992, \quad s=0.01,$$
	with the boundary condition: 
	$$\varphi(0, \mu)=5-5\mu, \text{ for } \mu>0 \text{ and }\varphi(1, \mu)=0, \text{ for }\mu<0.$$
	%
	\begin{figure}[H]
		\centering
		\subfigure[Flux with $N=20$ and $M=11$.]{
			\begin{minipage}[t]{0.45\linewidth}
				\centering
				\includegraphics[width=\linewidth]{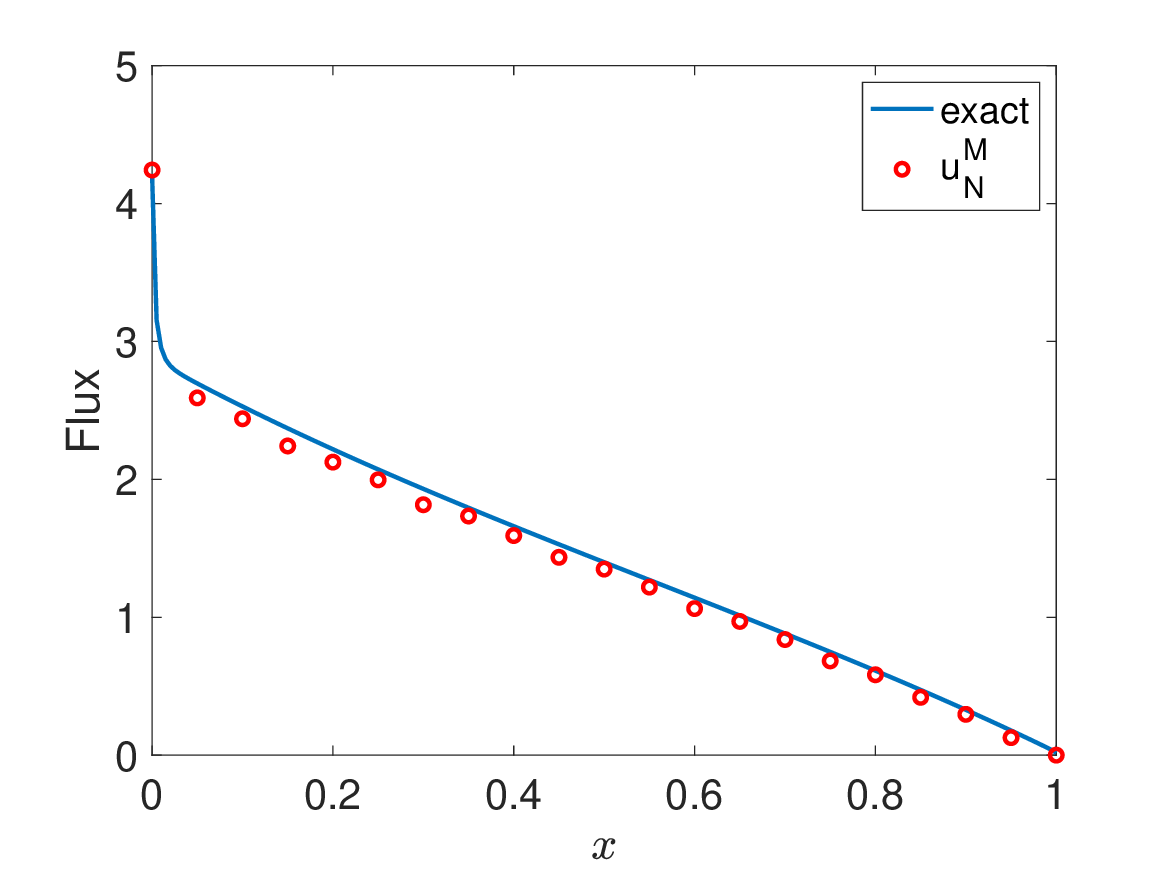}
				\label{1}
			\end{minipage}
		}
		\subfigure[Flux with $N=40$ and $M=11$.]{
			\begin{minipage}[t]{0.45\linewidth}
				\centering
				\includegraphics[width=\linewidth]{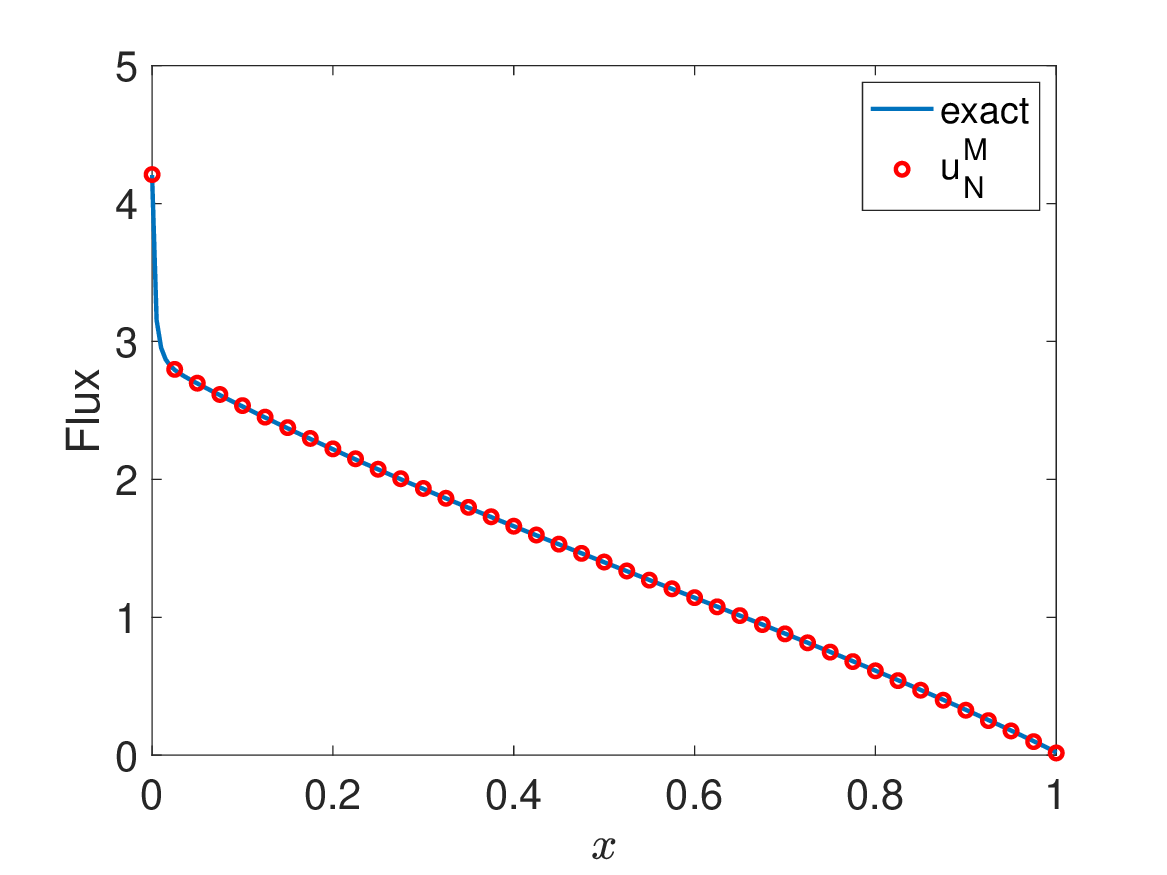}
				\label{2}
			\end{minipage}
		}
		\caption{The fluxes of Example 3 evaluated using the spectral method.}
		\label{fig1}
	\end{figure}
	To approximate Example 3, we utilize a fully spectral approximation scheme with $N$-degree polynomials in spatial space and    $(M+1)$  Legendre-Gauss collocation  points in angular space. The fluxes, presented in Figure \ref{fig1}, exhibit large gradients. The solid line represents the flux of the reference solution $u_{ref}$ for Example 3, corresponding to our fully  spectral  scheme  with $N = 200$ and $M = 11$. The scattered points in the plots represent the flux of the numerical solution. Specifically, Figure \ref{1} displays the flux of the 20th-degree polynomial spectral method with $M = 11$, while Figure \ref{2} shows the flux of the 40th-degree polynomial spectral method with $M = 11$.
	\begin{figure}[H]
		\centering
		\subfigure[Numerical errors $\Vert u-u_{N}^{M} \Vert_{L^{2}}$ vs. $N$ $(M=11)$.]{
			\begin{minipage}[t]{0.45\linewidth}
				\centering
				\includegraphics[width=\linewidth]{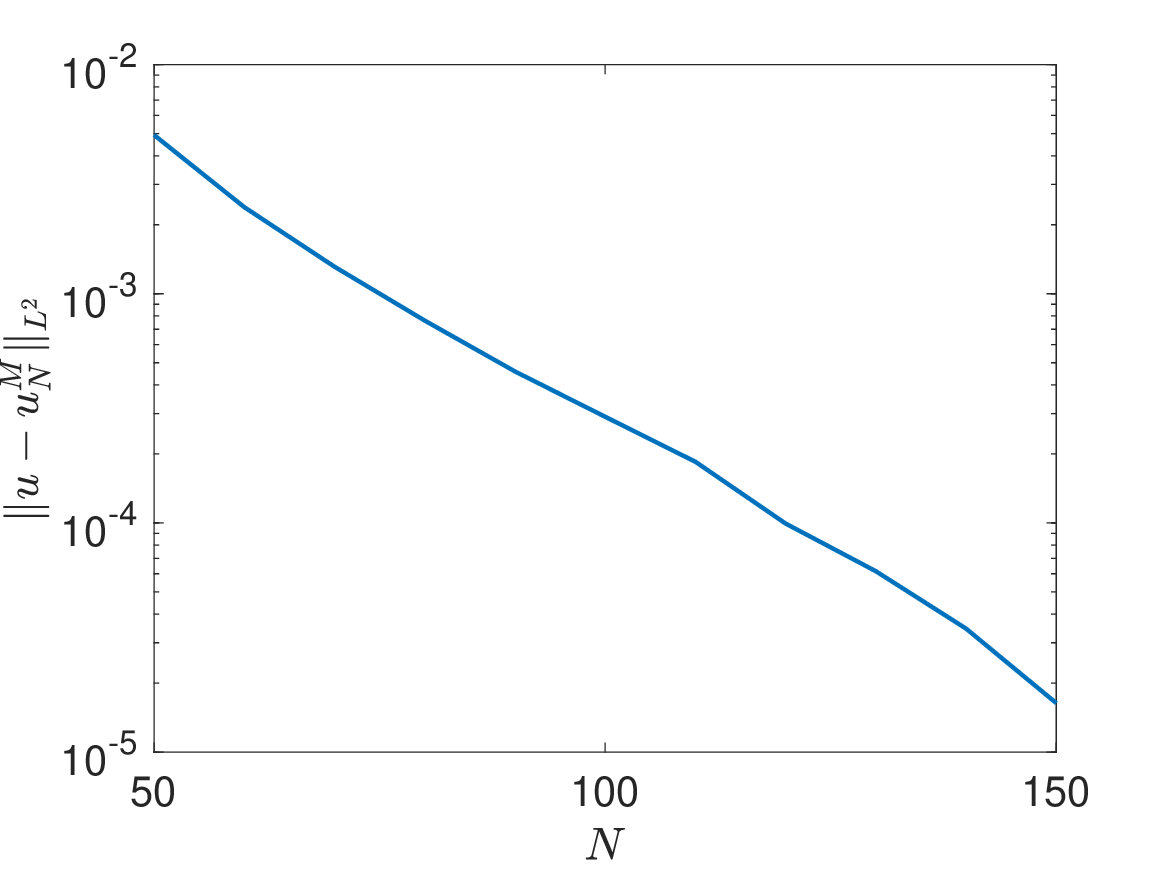}
				\label{error_1N}
			\end{minipage}
		}
		\hspace{0.6em}
		\subfigure[Numerical errors $\Vert u-u_{N}^{M} \Vert_{L^{2}}$ vs. $M+1$ $(N=160)$.]{
			\begin{minipage}[t]{0.45\linewidth}
				\centering
				\includegraphics[width=\linewidth]{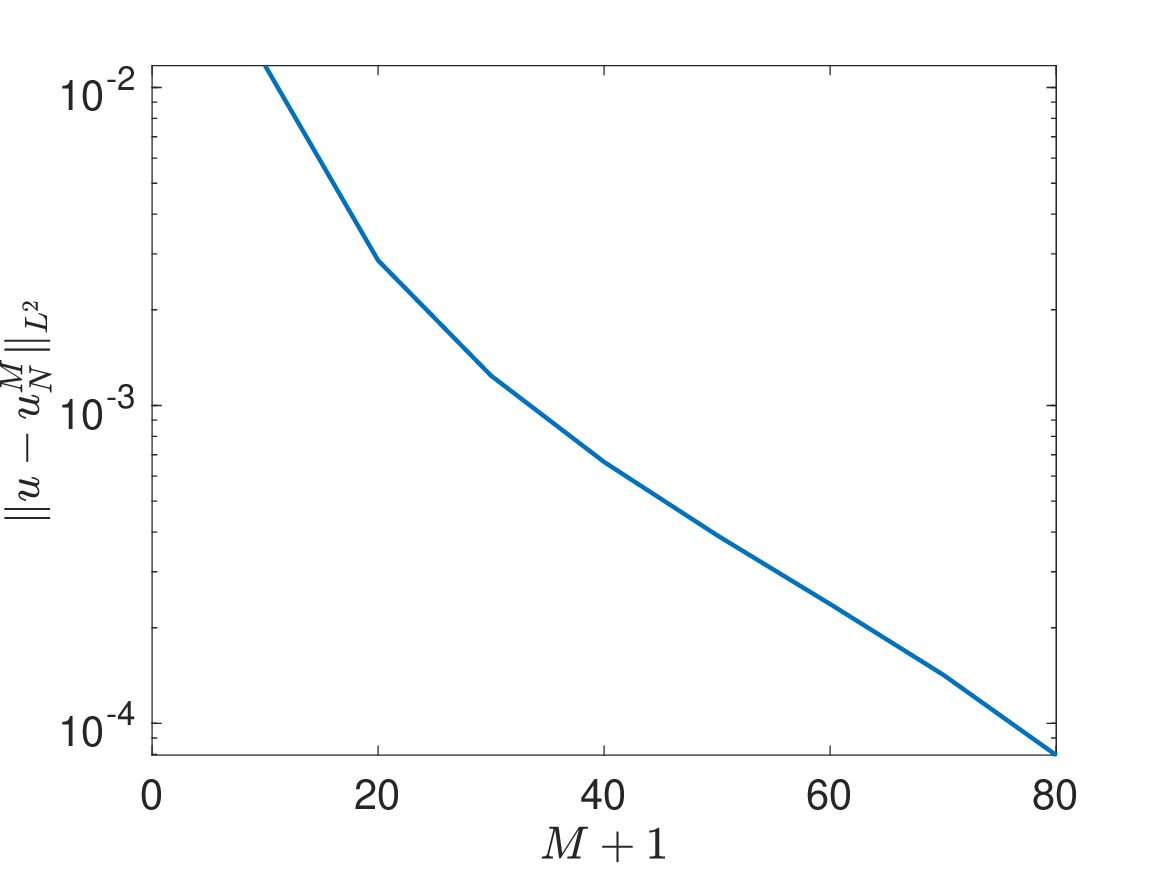}
				\label{error_1M}
			\end{minipage}
		}
		\caption{The $L^{2}$-errors of Example 3 evaluated by the spectral method.}
		\label{error_1}
	\end{figure}
	Figure \ref{error_1N} visually depicts the relationship between the $L^2$-errors of the fluxes for the reference solution and the numerical solution obtained using our spectral method, across various polynomial degree $N$. This plot specifically pertains to the case when $M = 11$. Furthermore, Figure \ref{error_1M} illustrates  the $L^2$-errors between the numerical flux and the reference  flux as $M$ increases, with the fixed  polynomial degree $N = 160$.
	
	Figure \ref{error_1} reveals that the $L^2$-errors of the numerical solution in Example 3 exhibit spectral orders of convergence  concerning both the polynomial degree $N$ and the discrete number $M+1$ in the direction of motion, even for problems with large gradients, consistent with Theorem \ref{thm1}. 
	\subsection{Example 4}
	In this example, we take
	$$D = (0,1), \quad \Sigma_{t}=100,\quad \Sigma_{s}=99.992,\quad s=0.01,$$
	with the vacuum boundary condition on both sides:
	\begin{align*}\nonumber
		\varphi(0, \mu)=0,\quad\mu>0,\quad	\varphi(1, \mu)=0,\quad\mu<0.
	\end{align*}
	The flux for Example 4, obtained using the fully spectral scheme, is presented in Figure \ref{5}.  Specifically, Figure \ref{5} displays the flux result obtained using our spectral method with the  polynomial degrees $N=10$ and  $M = 11$, alongside the reference solution obtained from experimental data in \cite{ren2022high}. Meanwhile Figure \ref{6} illustrates the relationship between the $L^1$-errors and $1/h$ for Example 4 using the HWENO method with the $12$ discrete ordinates  in the angular discretization \cite{ren2022high}.
	\begin{figure}[H]
		\centering
		\subfigure[Flux with $N=10$ and $M=11$.]{
			\begin{minipage}[t]{0.45\linewidth}
				\centering
				\includegraphics[width=\linewidth]{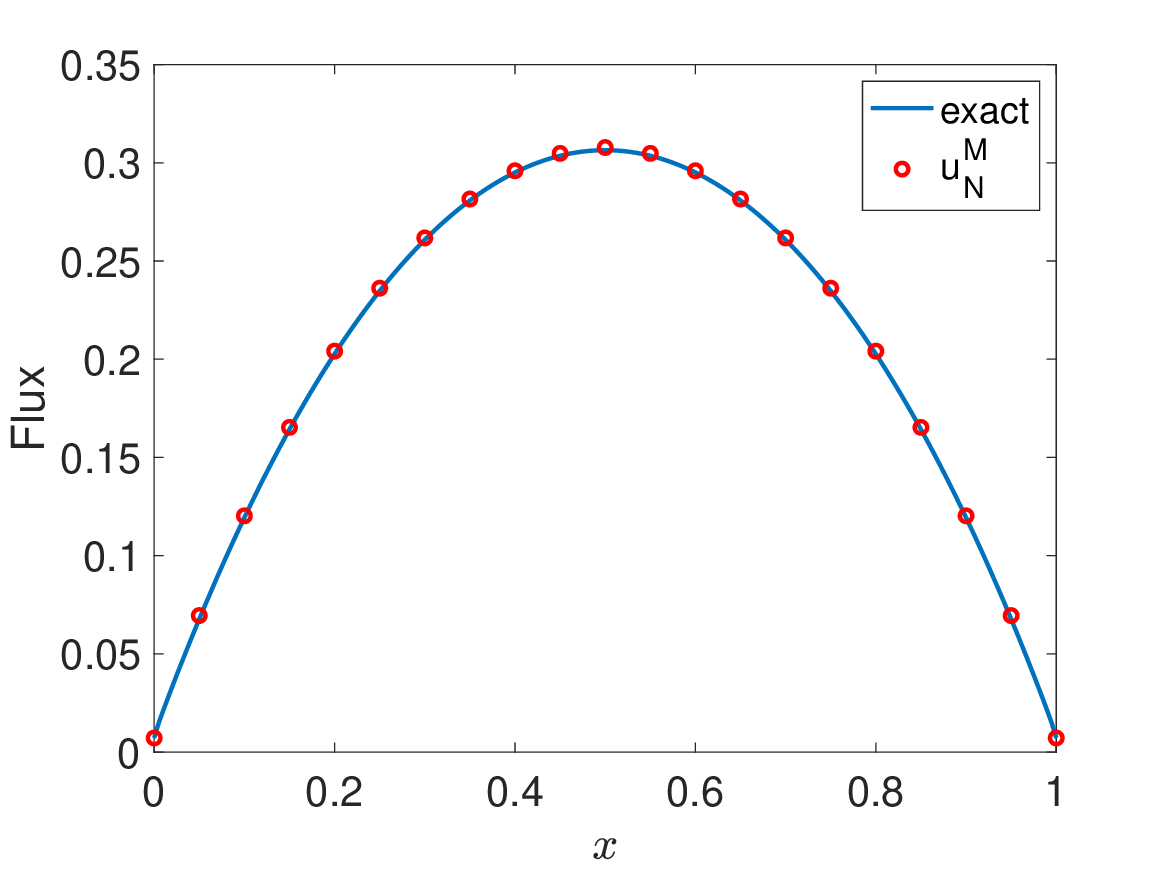}
				\label{5}
			\end{minipage}
		}
		\subfigure[$L^1$-errors of flux obtained by HWENO method.]{
			\begin{minipage}[t]{0.45\linewidth}
				\centering
				\includegraphics[width=\linewidth]{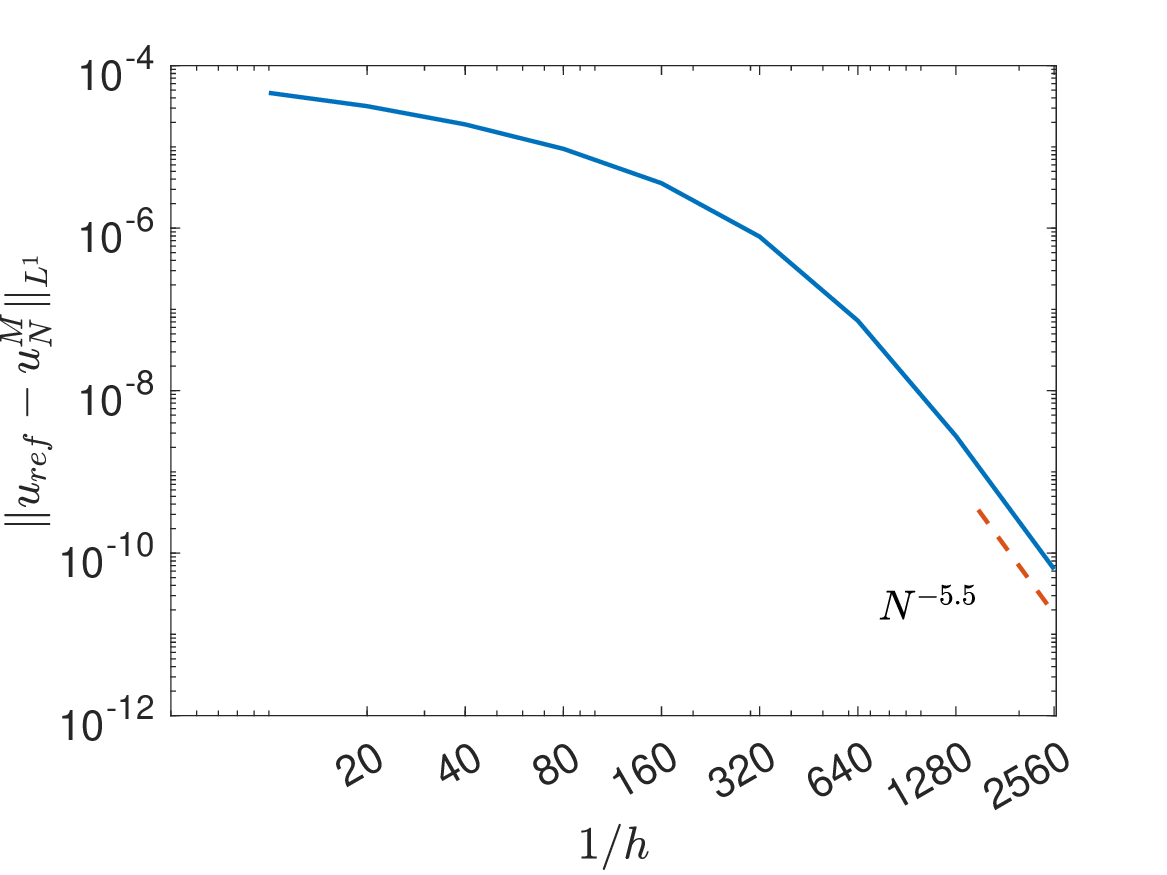}
				\label{6}
			\end{minipage}
		}
		\caption{The flux of Example 4 using the spectral method and $L^1$-errors of Example 4 for HWENO method.}
		\label{fig3}
	\end{figure}
	Figure \ref{fig5} displays the $L^2$-errors of the flux obtained using the spectral method for Example 4, plotted against the polynomial degree $N$ with $M=11$. Additionally, Figure \ref{fig6} showcases the $L^2$-errors of the flux for Example 4 as a function of the number of angular discretization $M+1$, with $N=140$.
	
	Based on the observations in Figure \ref{error2}, it is evident that the $L^2$-errors of the flux in the numerical solution for Example 4 exhibit spectral accuracy with respect to both the polynomial degree $N$ and the discrete number $M+1$ in the direction of motion.
	\begin{figure}[H]
		\centering
		\subfigure[Numerical errors $\Vert u-u_{N}^{M} \Vert_{L^{2}}$ vs. $N$ $(M=11)$.]{
			\begin{minipage}[t]{0.45\linewidth}
				\centering
				\includegraphics[width=\linewidth]{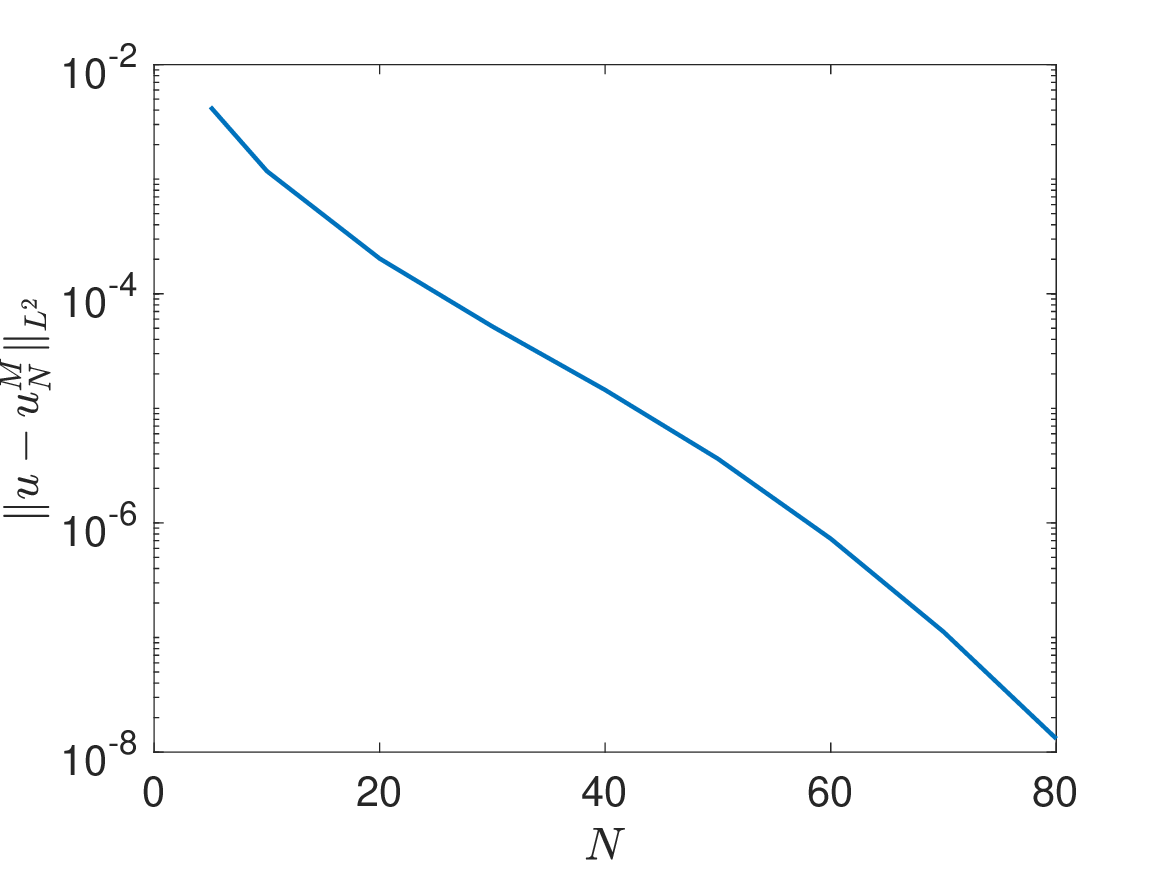}
				\label{fig5}
			\end{minipage}
		}
		\hspace{0.6em}
		\subfigure[Numerical errors $\Vert u-u_{N}^{M} \Vert_{L^{2}}$ vs. $M+1$ $(N=140)$.]{
			\begin{minipage}[t]{0.45\linewidth}
				\centering
				\includegraphics[width=\linewidth]{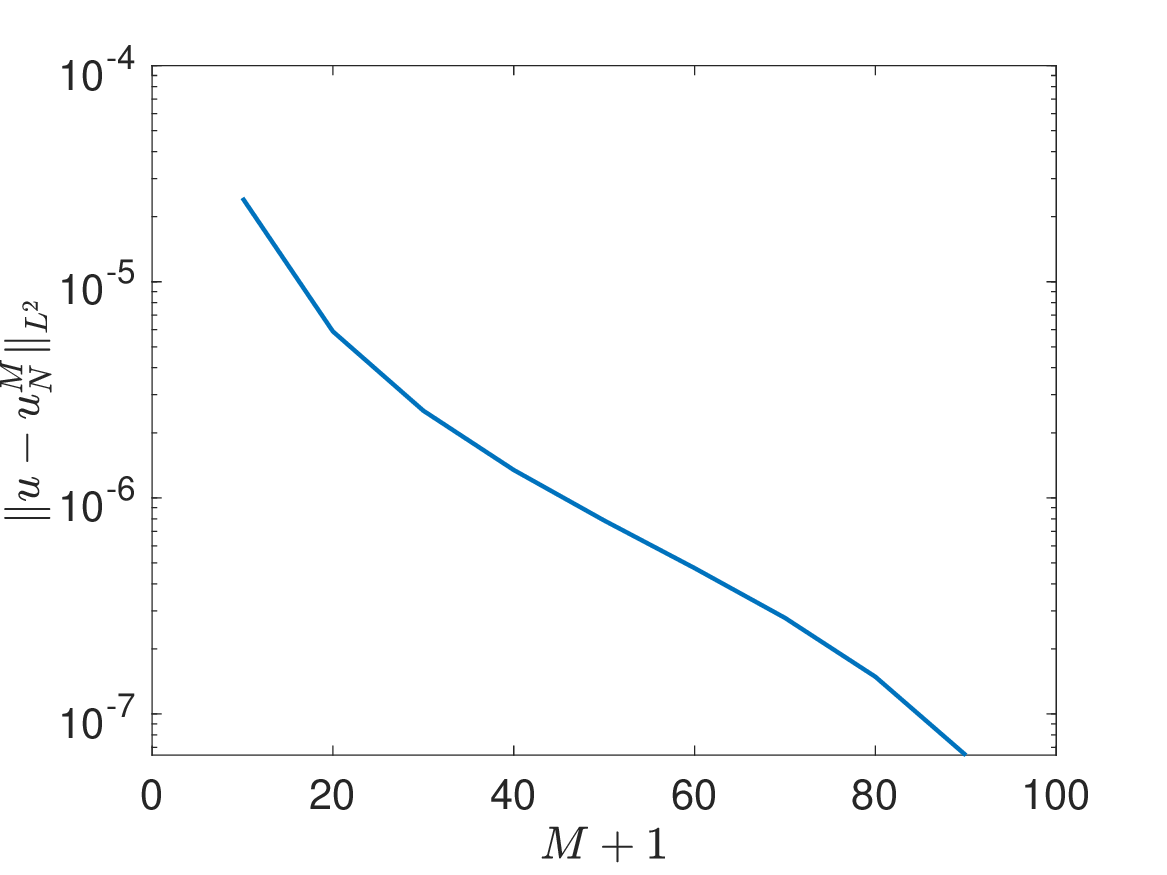}
				\label{fig6}
			\end{minipage}
		}
		\caption{The $L^{2}$-errors of Example 4 obtained using the spectral method.}
		\label{error2}
	\end{figure}
	Comparing Figure \ref{6} and Figure \ref{fig5}, we observe that, as $N$ gradually increases, our spectral method outperforms the HWENO method in terms of accuracy under the same degree of freedom. The spectral method demonstrates superior accuracy and efficiency compared to the HWENO method when solving the same neutron transport equation.
	
	Typically, the neutron transport equation features discontinuous coefficients. Below, we present two examples with such coefficients and utilize the multi-domain spectral method to solve these problems.
	\subsection{Example 5}
	We set $D = (0,2)$, the exact solution $\varphi(x,\mu) = x^3(2-x)^3$, 
	
	\[
	\Sigma_t(x,\mu) = \left\{ 
	\begin{array}{ll}
		3-x,    & 0 \leq x \leq 1, \\[0.5em]	
		x,  & 1 < x \leq 2,	
	\end{array}
	\right. 
	\quad
	\Sigma_s(x,\mu) = \left\{ 
	\begin{array}{ll}
		2-x,    & 0 \leq x \leq 1, \\[0.5em]	
		x-1/2,  & 1 < x \leq 2,	
	\end{array}
	\right. 
	\]
	
	\[
	s(x,\mu) = \left\{ 
	\begin{array}{ll}
		12 x^2(1-x)(2-x)^2 \mu + 2x^3(2-x)^3,    & 0 \leq x \leq 1, \\[0.5em]	
		12 x^2(1-x)(2-x)^2 \mu +  x^3(2-x)^3,  & 1 < x \leq 2.	
	\end{array}
	\right. 
	\]
	\begin{figure}[H]
		\centering
		\subfigure[Flux with $N=10$ and $M+1=2$.]{
			\begin{minipage}[t]{0.45\linewidth}
				\centering
				\includegraphics[width=\linewidth]{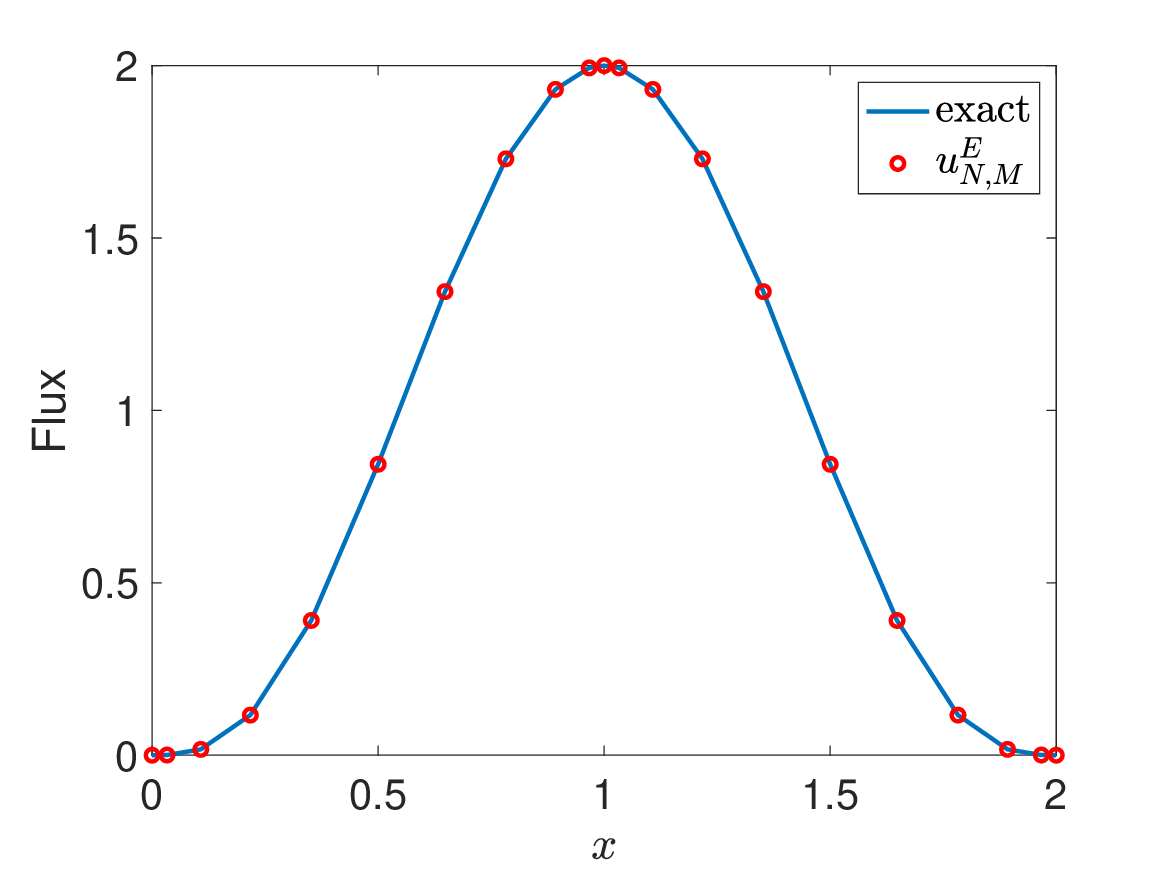}
				\label{ex1N10}
			\end{minipage}
		}
		\hspace{0.4em}
		\subfigure[Flux errors $\| u - u_{num} \|_{L^{2}}$ vs.  the element numbers $E$  by the quartic finite element method.]{
			\begin{minipage}[t]{0.45\linewidth}
				\centering
				\includegraphics[width=\linewidth]{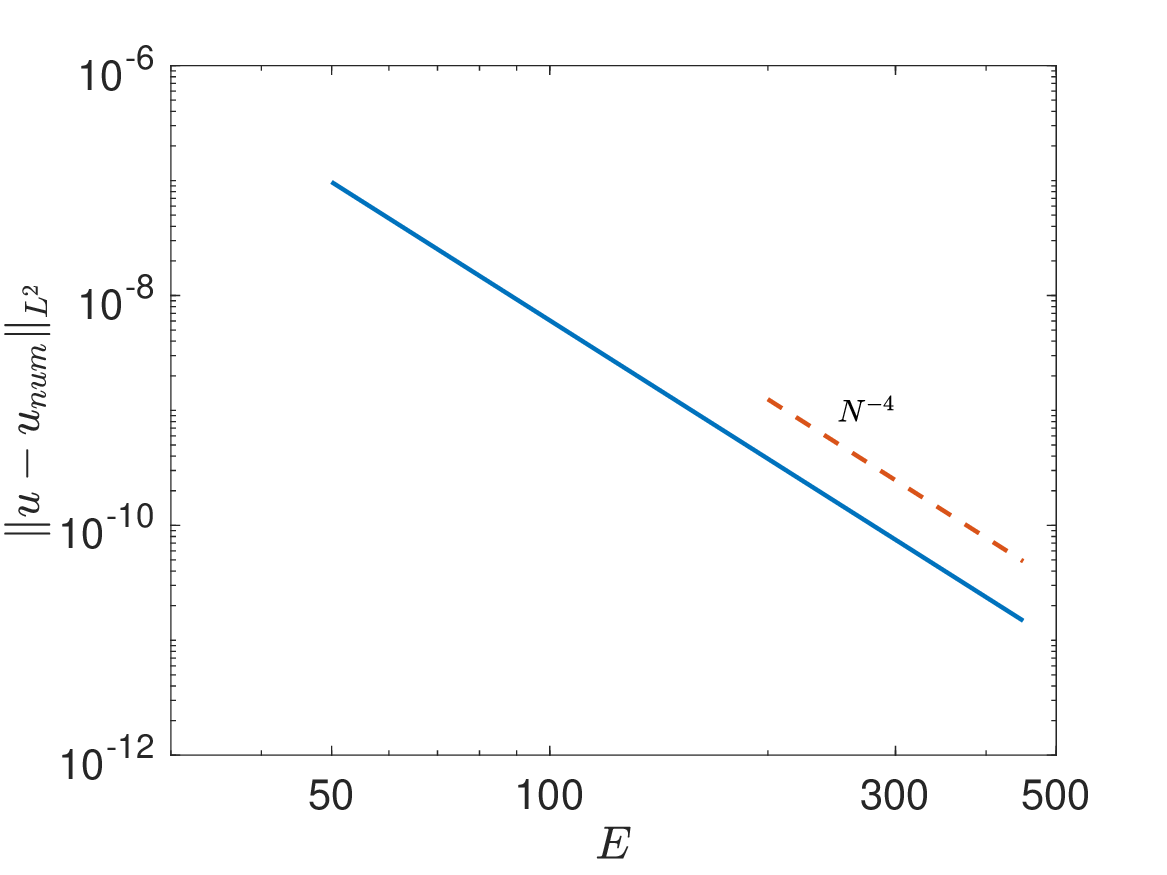}
				\label{errorl_inf_fem}
			\end{minipage}
		}
		\caption{The numerical flux by using the multi-domain spectral method (a), and $L^{2}$-errors of the numerical flux  vs.  $E$ by  the quartic finite element method (b).}
		\label{discontinex}
	\end{figure}
	Since the coefficients  are discontinuous at $x=1$, we partition the interval into two sub-intervals  $(0,1)$ and $(1,2)$ (i.e., $E=2$),   and apply the spectral method in each cell 
	 with the polynomial degrees $N$ and  $M$  for spatial and directional variables, respectively.  The trial functions are  continuous across the interface of two  cells.
	 In comparison, we also utilize the $C^0$-conforming   quartic finite element method  in spatial space (using piecewise polynomials of  the fixed degree $N=4$)  in $E$ elements. 
	 
	  Figure \ref{ex1N10} presents the numerical flux $u_{N,M}^E$ for $N=10$ and $M+1=2$ of our two-domain $(E = 2)$ spectral method. 	Figure \ref{errorL2_ex1_N} illustrates its  $p$-convergence of  the numerical flux  with respect to the spatial polynomial degree $N$ for $M=1$.  
	  While Figure \ref{errorl_inf_fem} shows the $h$-convergence of the numerical flux $u_{N,M}^E$ respect to the number of elements $E$ in  the quartic  finite element method 
	  on uniform meshes  with  $M=1$.
	\begin{figure}[H]
		\centering
		\subfigure[Numerical errors $\| u - u_{N,M}^E\|_{L^2}$ vs. $N$ $(M+1 = 2, E = 2)$.]{
			\begin{minipage}[t]{0.45\linewidth}
				\centering
				\includegraphics[width=\linewidth]{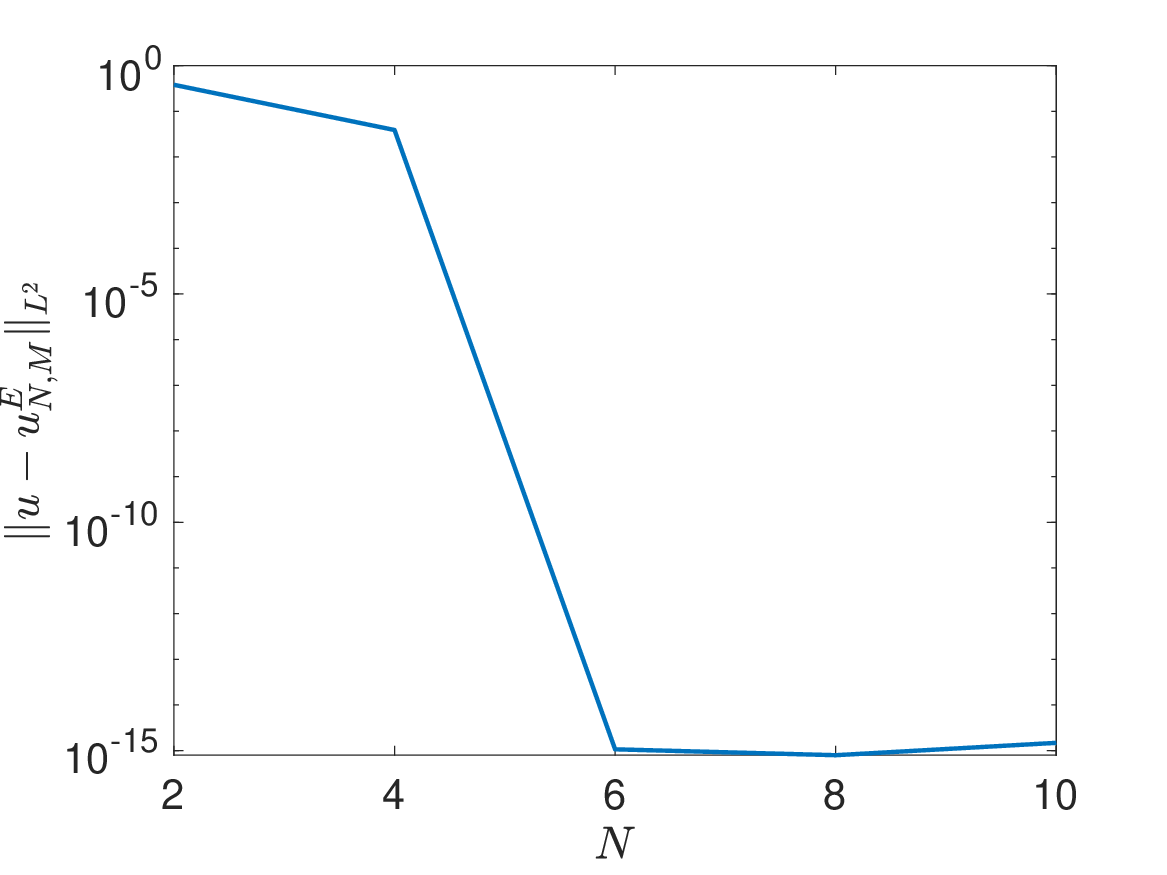}
				\label{errorL2_ex1_N}
			\end{minipage}
		}
		\hspace{0.4em}
		\subfigure[Numerical errors $\| u - u_{num}\|_{L^2}$ vs. $M$ evaluated by multi-domain spectral method and quartic FEM.]{
			\begin{minipage}[t]{0.45\linewidth}
				\centering
				\includegraphics[width=\linewidth]{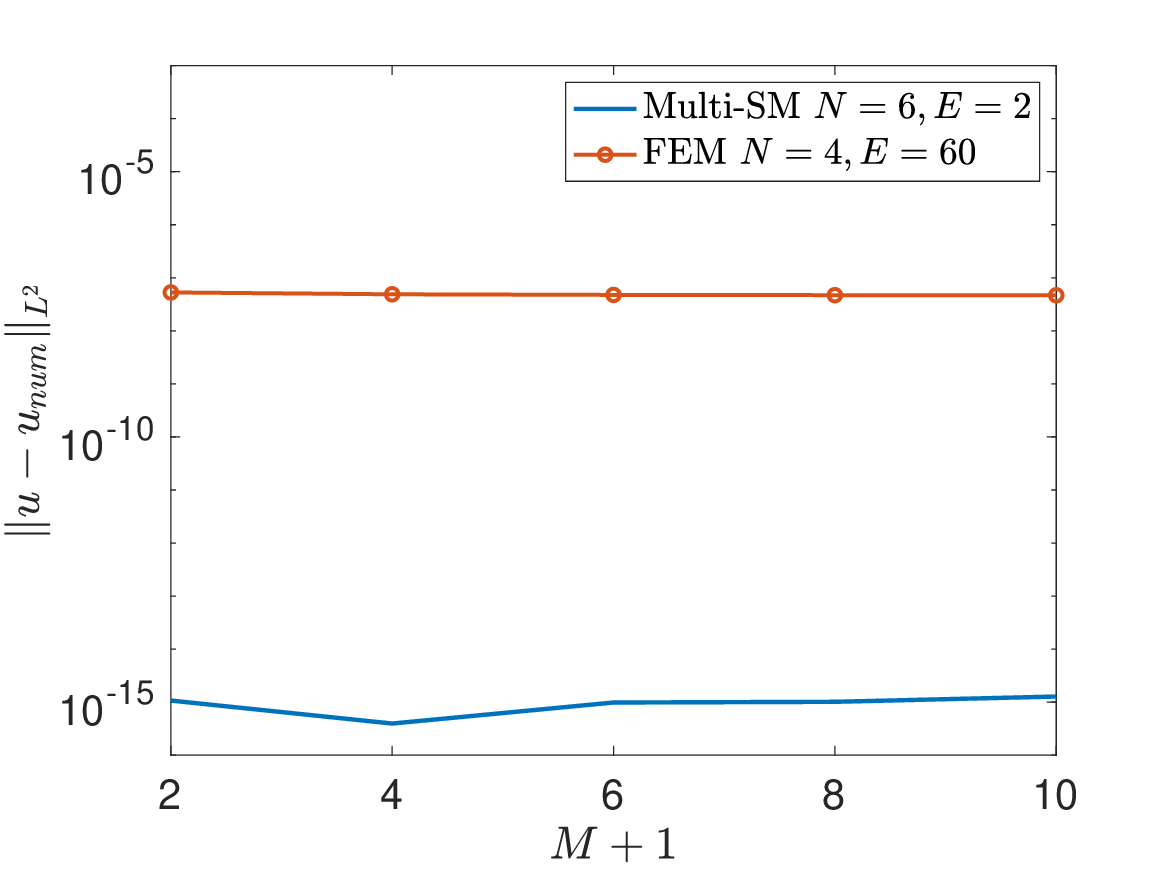}
				\label{errorL2_ex1_M}
			\end{minipage}
		}
		\caption{The $L^2$ numerical errors of flux versus $N$ for Example 5 evaluated by multi-domain spectral method and $L^2$-errors of the flux vs. $M$ by using the multi-domain spectral method and FEM.}
		\label{discontinexerror}
	\end{figure}

As one can observed, the multi-domain spectral still possesses an exponential order of  $p$-convergence in $N$ owing to the infinitely smooth solution.
While the finite element method can only enjoy a fixed order of $h$-convergence in $E$, more specifically, the flux errors measured in $L^{2}$-norm decay asymptotically  in $\mathcal{O}(h^{N})$ where $h=1/E$.  
	
Also, we plot the numerical errors versus the angular polynomial order $M$ in Figure \ref{errorL2_ex1_M} for both methods with other parameters fixed. Ideally,  no approximation errors will occur  in the angular discretization for both methods whenever $M\ge 1$ such that the total errors are only dependent of the spatial discretization parameters, since the source term, the coefficients and the exact solution are polynomials of degree $\le 1$ in the variable $\mu$. 
From  Figure \ref{errorL2_ex1_M}, one observes that  the multi-domain spectral method yields better results under the same degrees of freedom. 
		
	\subsection{Example 6}
	The specifications of the problem are defined by $D = (0,2)$,
	\[
	\Sigma_t = \left\{ 
	\begin{array}{ll}
		1,    & 0 \leq x \leq 1, \\[0.5em]	
		100,  & 1 < x \leq 2,	
	\end{array}
	\right. 
	\quad
	\Sigma_s = \left\{ 
	\begin{array}{ll}
		0,    & 0 \leq x \leq 1, \\[0.5em]	
		99.992,  & 1 < x \leq 2,	
	\end{array}
	\right. 
	\quad
	s = \left\{ 
	\begin{array}{ll}
		0,    & 0 \leq x \leq 1, \\[0.5em]	
		0.01,  & 1 < x \leq 2.	
	\end{array}
	\right.
	\]
	The $M+1=12$ Legendre-Gauss quadrature nodes are used for the angular discretization. The incoming angular flux at $x = 0$
	 changes linearly from 0 to 5 for the six discrete incoming directions, i.e,   $g_l(\mu_{i})= 11-i,\, i=6,7,\dots,11$. On the right boundary at $x = 2$, the vacuum boundary $g_r(\mu_{i})= 0, \, i=0,1,\dots,5$  is considered.
	
	Due to the discontinuity of the coefficients at $x = 1$, we segment the interval into $(0,1)$ and $(1,2)$, and implement our spectral method in each cell. For each spatial variable in each sub-region, we use an $N$th-degree polynomial approximation. Specifically, in each sub-region, we use Lagrange polynomials obtained from $N+1$ Gauss-Lobatto-Legendre points as the approximation function. For the directional variable, we use the Gauss-Legendre discrete ordinate method with $M+1=12$ for the approximation. This approach leads to our proposed spectral discretization scheme. Figure \ref{ex2flux} shows the flux at Gauss-Lobatto points of Example 6 obtained using a multi-domain spectral method with $N = 45$, $M+1 = 12$, and the reference flux is obtained from \cite{ren2022high}, which is evaluated by the HWENO method with  ${12}$ discrete ordinates  in the angular discretization. Figure \ref{fluxex2uniform} displays the flux at the uniform grid points of Example 6 obtained using multi-domain spectral method with $N = 30$ and $M+1 = 12$. Figure \ref{error_ex2_point} presents the pointwise flux errors at the uniform grid points, evaluated using the multi-domain spectral method with $N = 30$ and $M+1 = 12$.
	\begin{figure}[H]
		\centering
		\subfigure[Flux at Gauss-Lobatto points with $N=45$ and $M+1=12$.]{
			\begin{minipage}[t]{0.3\linewidth}
				\centering
				\includegraphics[width=\linewidth]{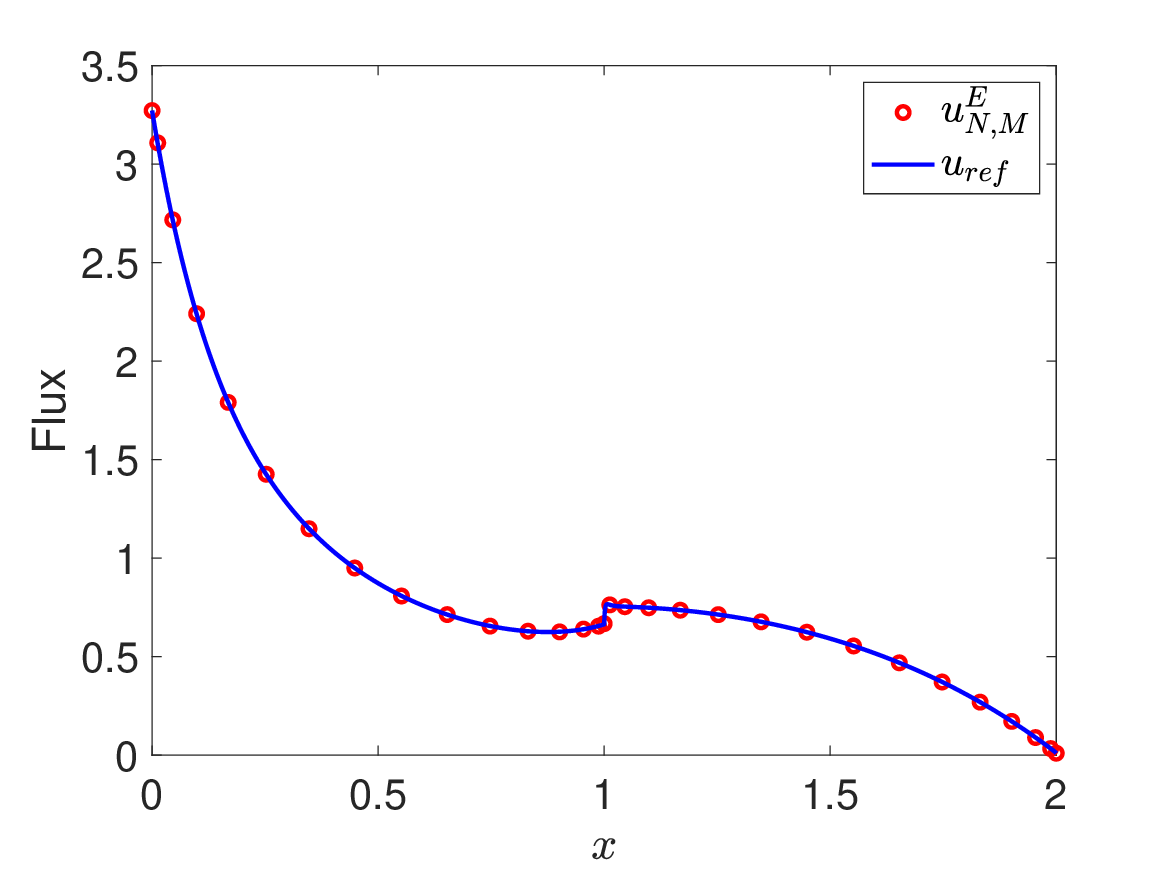}
				\label{ex2flux}
			\end{minipage}
		}
		\hspace{0.3em}
		\subfigure[Flux at uniform grid points with $N=30$ and $M+1=12$.]{
			\begin{minipage}[t]{0.3\linewidth}
				\centering
				\includegraphics[width=\linewidth]{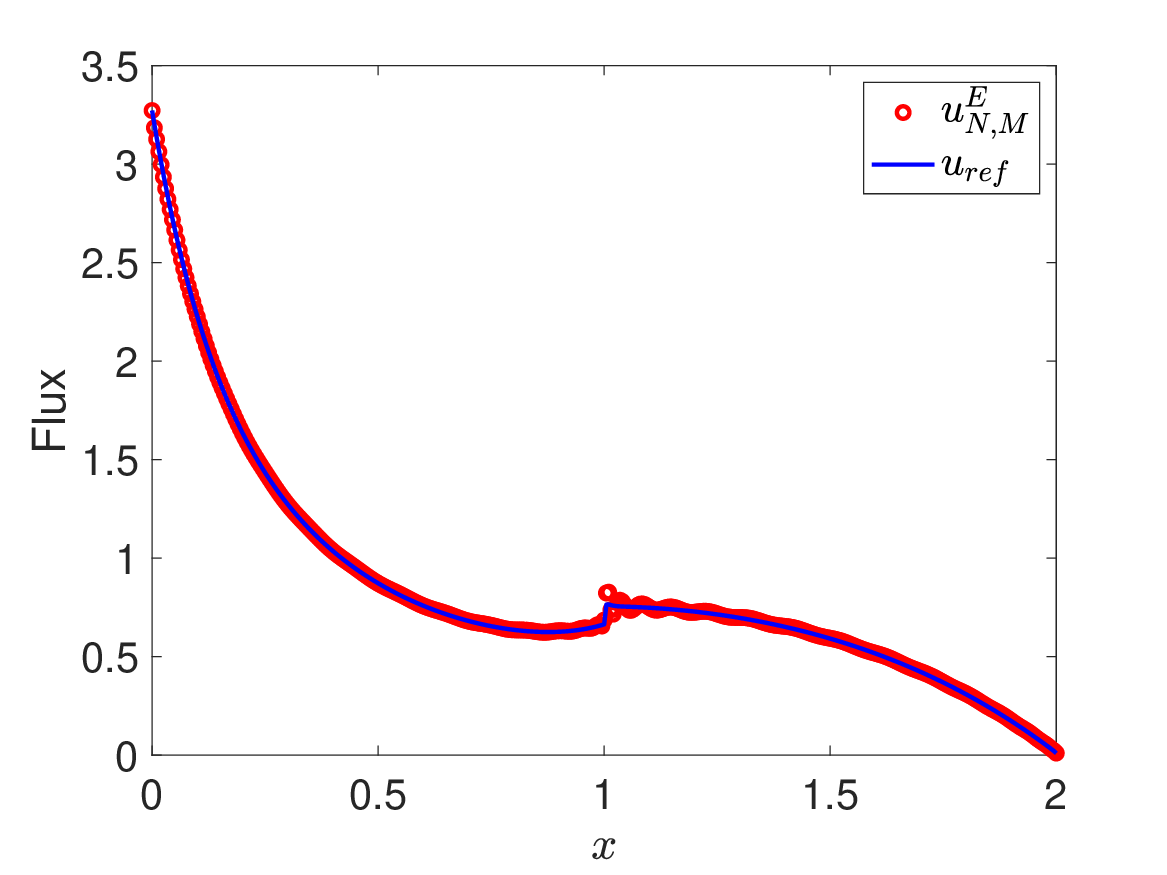}
				\label{fluxex2uniform}
			\end{minipage}
		}
		\hspace{0.3em}
		\subfigure[Pointwise flux errors at the uniform grid points.]{
			\begin{minipage}[t]{0.3\linewidth}
				\centering
				\includegraphics[width=\linewidth]{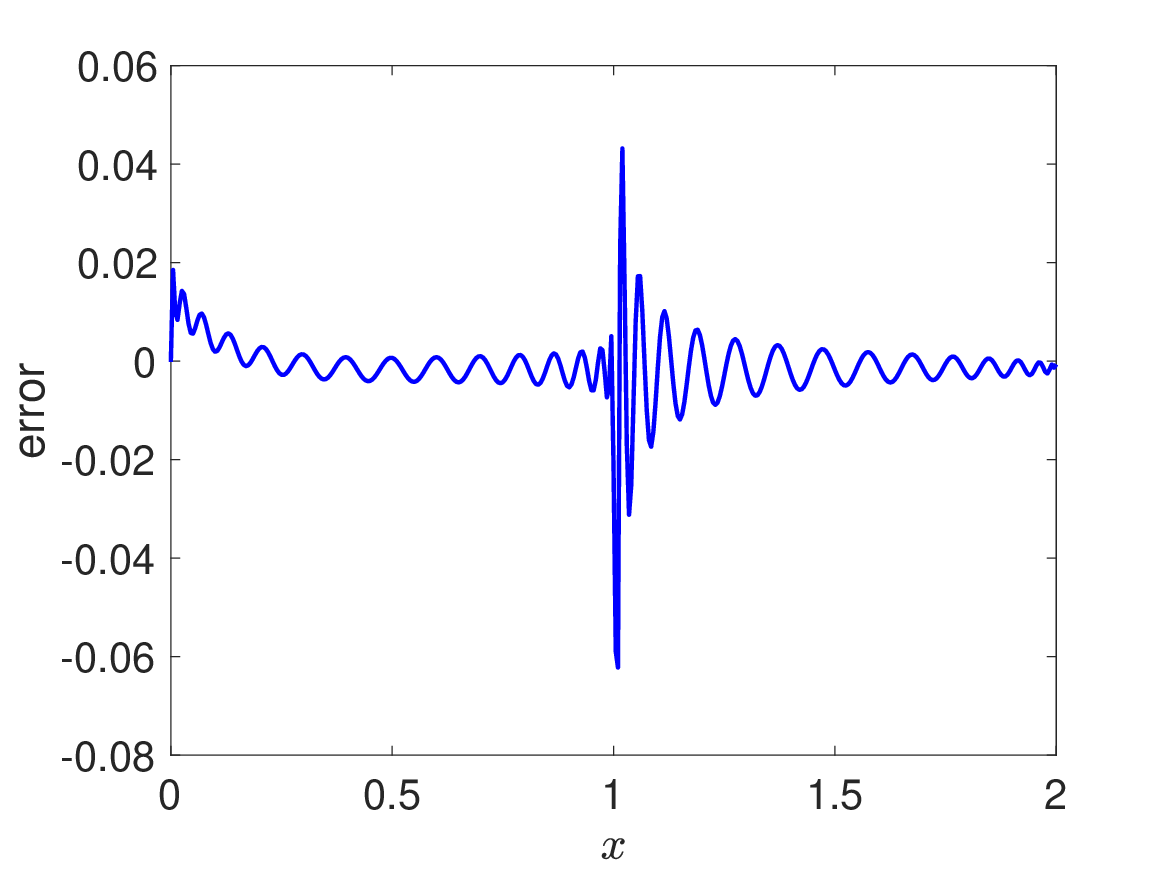}
				\label{error_ex2_point}
			\end{minipage}
		}
		\caption{Fluxes of the neutron transport equation with discontinuous coefficients evaluated by a multi-domain spectral method, and pointwise flux errors of the numerical solution.}
		\label{discontinex2}
	\end{figure}
	From Figure \ref{discontinex2}, it is evident that the numerical flux obtained using the multi-domain spectral method closely aligns with the reference solution flux. This indicates the effectiveness of the multi-domain spectral method in addressing the neutron transport problem with discontinuous coefficients.
	\begin{figure}[H]
		\centering
		\subfigure[Numerical error of flux $\|u - u_N^M \|_{L^{\infty}}$ vs. $N$ (spatial polynomial degree) obtained by multi-domain spectral method $(M = 11)$.]{
			\begin{minipage}[t]{0.45\linewidth}
				\centering
				\includegraphics[width=\linewidth]{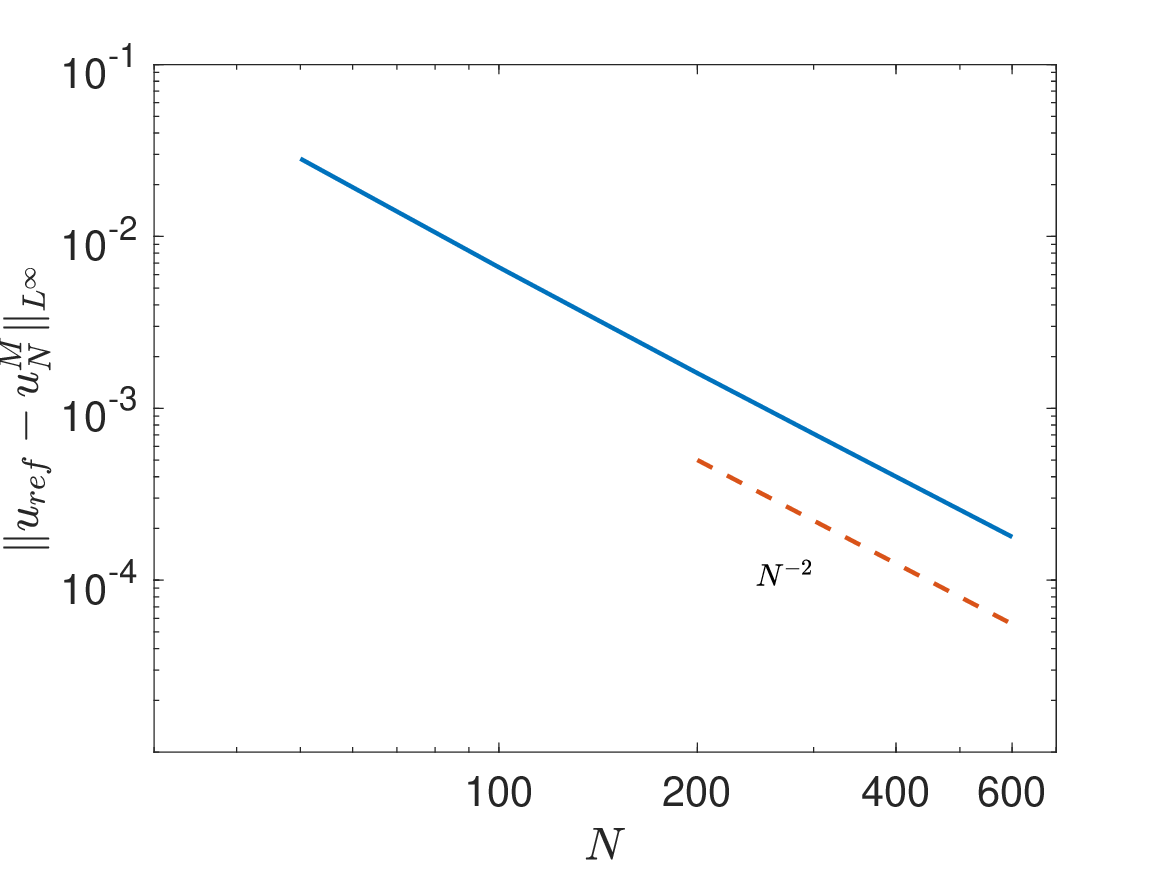}
				\label{errspmex2}
			\end{minipage}
		}
		\hspace{0.4em}
		\subfigure[The $L^{\infty}$-error of flux vs. $E$ (number of elements) by the quartic finite element method and  the HWENO method $(M = 11)$.]{
			\begin{minipage}[t]{0.45\linewidth}
				\centering
				\includegraphics[width=\linewidth]{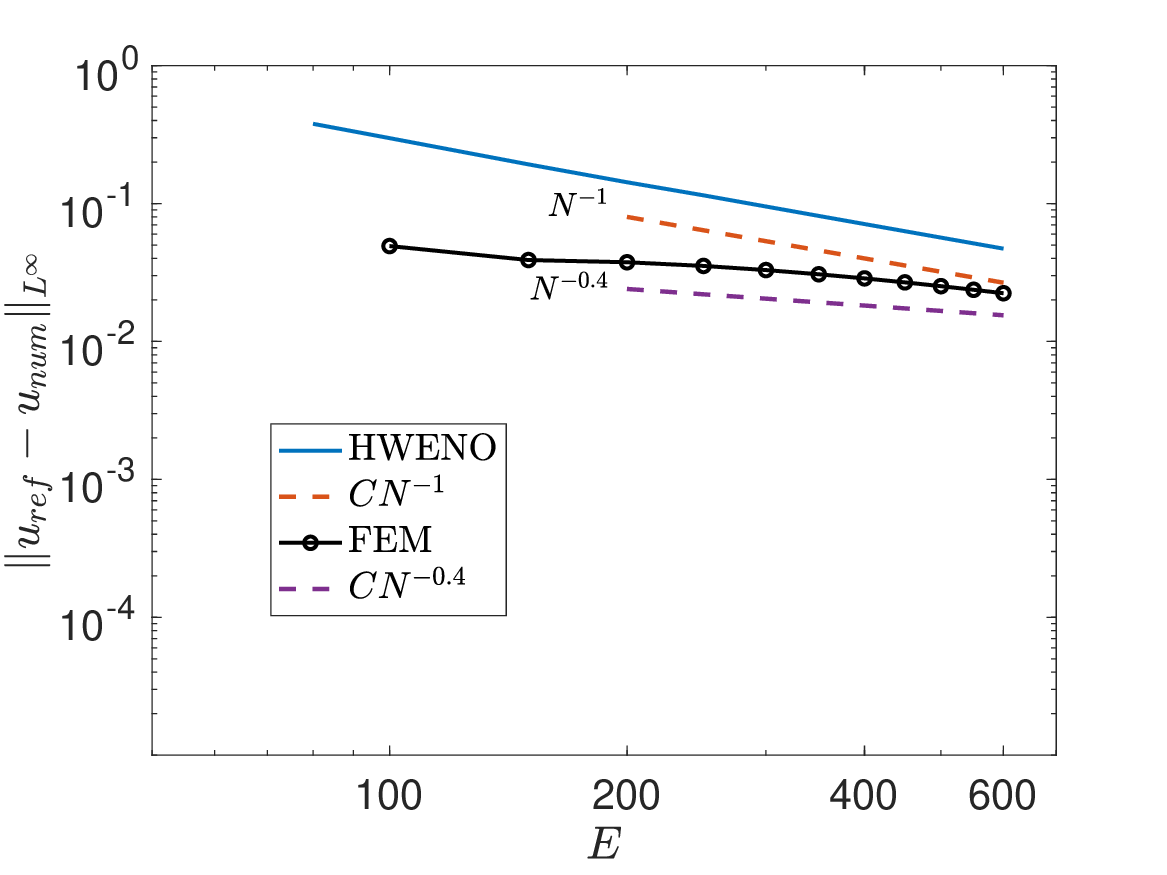}
				\label{errhwenoex2}
			\end{minipage}
		}
		\caption{The $L^{\infty}$ numerical errors of flux vs. $N$ evaluated by a multi-domain spectral method (a), the $L^{\infty}$ numerical errors of flux vs. $E$ by using HWENO method and quartic finite element method (b).}
		\label{errordisconex2}
	\end{figure}
	In Figure \ref{errordisconex2}, we present  the  $L^{\infty}$-errors of the numerical flux with respect to the parameter  $N$ or $E$  for  Example 6 involving discontinuous coefficients, solved using the multi-domain spectral method,   the HWENO method and the quartic FEM, all with $M+1=12$ discrete ordinates. From Example 1 in \cite{ren2022high}, we know that the HWENO method achieves an accuracy around $\mathcal{O}(h^{5.5})$ for problems with smooth solutions. For problems with discontinuous solutions, it  exhibits only a  first-order convergence (cf. Figure \ref{errhwenoex2}).  It can be seen clearly that our multi-domain spectral method in Example 6 achieves a higher convergence rate compared to the HWENO method and the quartic FEM.
	
	To verify the non-improvability of  the convergence order  in Theorem \ref{thm1}, we solve an example with an exact solution of finite regularity using spectral method. This enables us to evaluate the sharpness of  our error estimates  by examining the convergence order of our spectral method.
	\subsection{Example 7}\label{5.7}
	Here, we present an example with finite regularity to evaluate the sharpness of our error estimate. 
	
	The problem is characterized by the following specifications:
	$$D = (0,2), \quad \Sigma_t(x,\mu) = 3-x, \quad \Sigma_s(x,\mu) = 2-x, $$
	\[
	s(x,\mu) = \left\{ 
	\begin{array}{ll}
		2 \mu + 2x,    & 0 \leq x \leq 1, \\[0.5em]	
		-2 \mu +  2(2-x),  & 1 < x \leq 2.	
	\end{array}
	\right. 
	\]
	The exact solution 
	\[
	\varphi(x,\mu) = \left\{ 
	\begin{array}{ll}
		x,    & 0 \leq x \leq 1, \\[0.5em]	
		2-x,  & 1 < x \leq 2.	
	\end{array}
	\right. 
	\]
	We know that $|x| \in W
	_1^q(-1,1)$, for $ 2 < q < \infty$. From embedding theorem, we know $|x| \in W_2^s(-1,1)$, $s - 1/2 = 1 - 1/q$. Thus, $|x| \in H^{3/2-\epsilon}(-1,1)$ and $\varphi(\cdot,\mu) \in H^{3/2-\epsilon}(0,2)$, for any $ \epsilon > 0$. 
	\begin{figure}[H]
		\centering
		\subfigure[The flux with $N=60$ and $M+1=2$.]{
			\begin{minipage}[t]{0.45\linewidth}
				\centering
				\includegraphics[width=\linewidth]{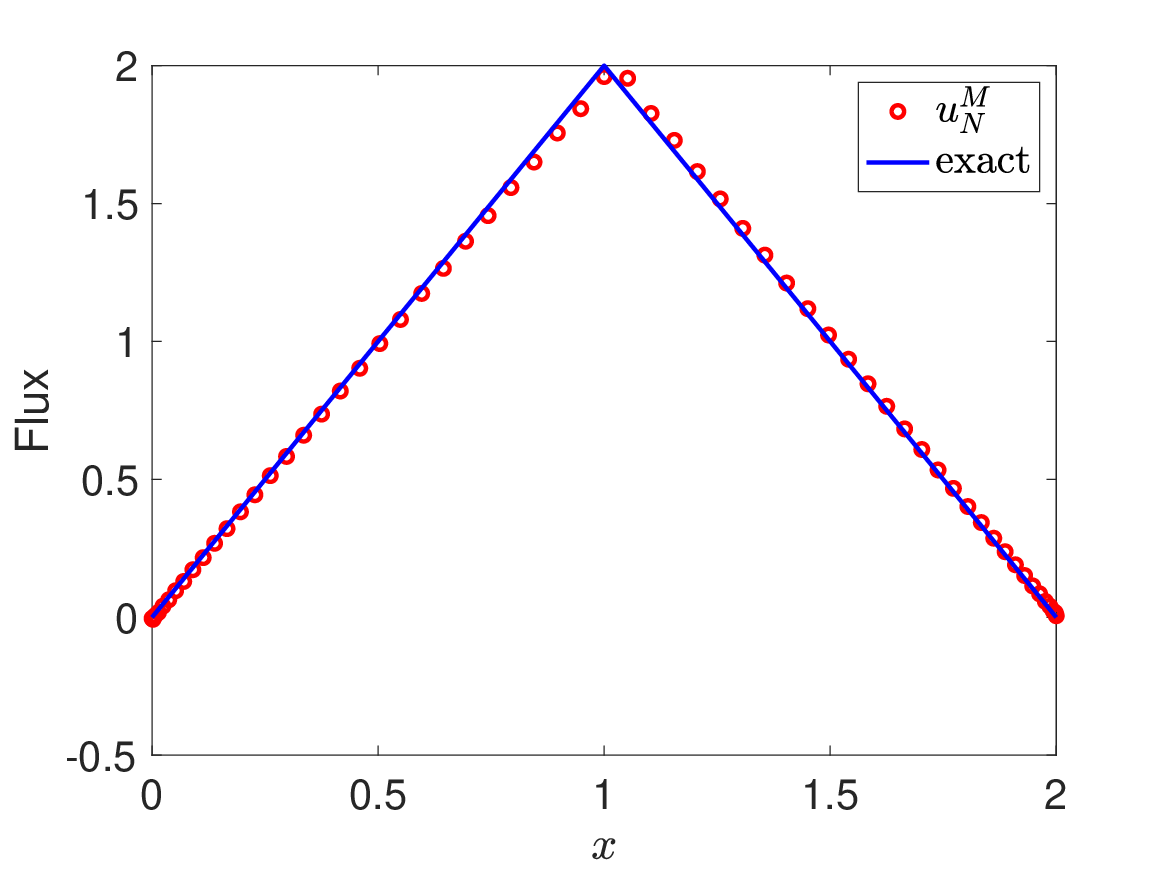}
				\label{ex2N60M2}
			\end{minipage}
		}
		\subfigure[Numerical errors {$\| u - u_N^M\|_{L^2(D)}+\| u - u_N^M\|_{L^2(\partial D)}$ vs. $N$ $(M+1 = 30)$}.]{
			\begin{minipage}[t]{0.45\linewidth}
				\centering
				\includegraphics[width=\linewidth]{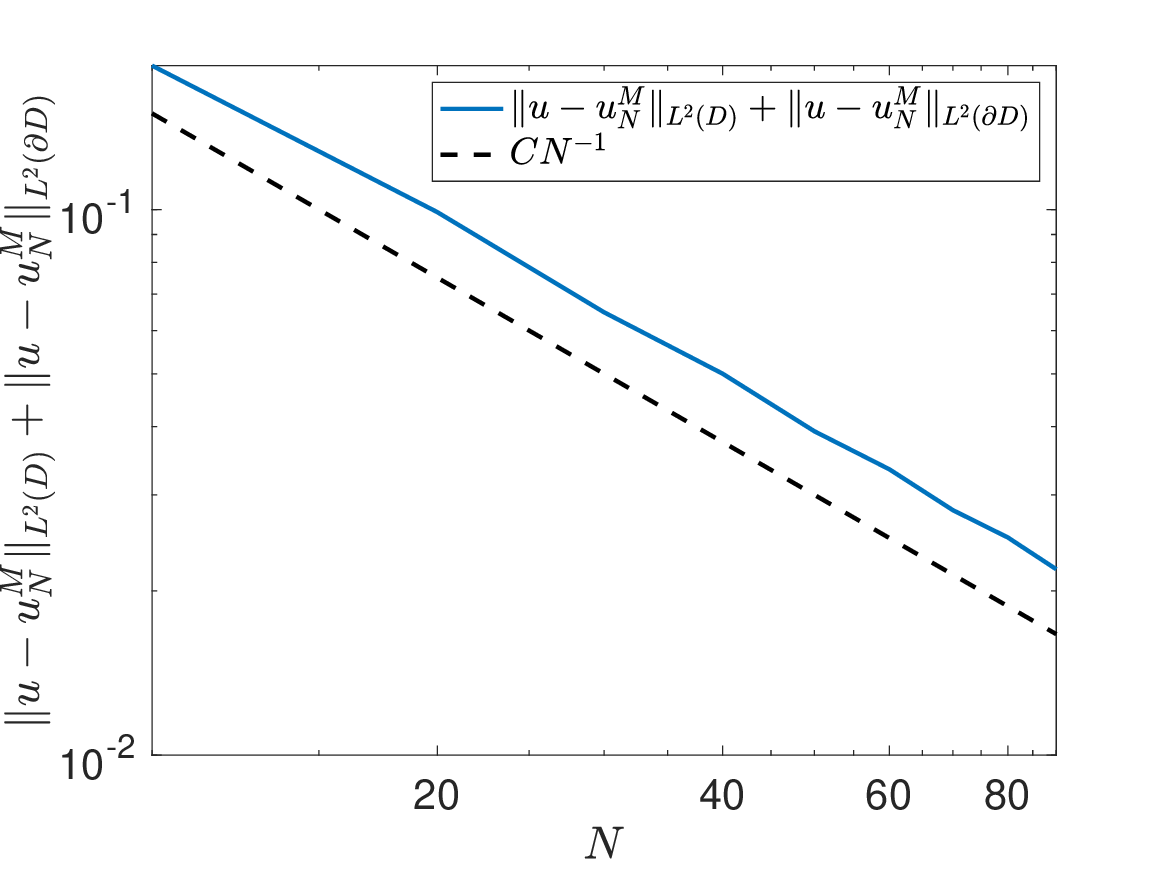}
				\label{errorL2_ex2_N}
			\end{minipage}
		}
		\caption{The flux of the solution with finite regularity evaluated by spectral method and $L^2$-errors of the flux of the numerical solution vs. $N$.}
		\label{extestconverg1}
	\end{figure}
	The flux and the $L^2$-errors of implementing the fully spectral discretization scheme for this problem are depicted in Figure \ref{extestconverg1}. Figure \ref{ex2N60M2} illustrates the numerical flux of the exact solution alongside the flux of the numerical solution, obtained using a spatial approximation polynomial of degree $N=60$ and $M+1=2$ discretization points in the direction of motion. Additionally, Figure \ref{errorL2_ex2_N} shows the $L^2$-errors of the numerical solution as a function of the spatial approximation polynomial degree $N$, with $M+1=30$.
	
	Based on the conclusions of our error analysis, the flux belongs to $H^{3/2-\epsilon}(D)$ for any $\epsilon > 0$, and the behavior of the $L^2$-errors with respect to the spatial approximation polynomial degree $N$ asymptotically obeys  $\mathcal{O}(N^{-1})$. From Figure \ref{errorL2_ex2_N}, we observe that the $L^2$-error of the numerical flux aligns with the conclusions of the error analysis. Thus, through this verification, we ascertain that the first term in Theorem \ref{thm1} is sharp.
%
	
	\section{Conclusion}\label{sec6}
	We have devised an efficient  (Petrov) Galerkin-Gauss collocation method to tackle the neutron transport equation. The solvability of this spectral approach has been substantiated, along with the provision of an error estimate for the transport equation. Furthermore, to exemplify the precision and efficacy of our approach, we present several  numerical examples and compare them with other methods. Through these comparisons, we observe that the fully spectral method offers advantages in both computational efficiency and error accuracy, in particular for those with a smooth solution.  
	
	It is worth mentioning that    the inflow boundary conditions are can also be naturally  embedded  in the variational form of our approximation scheme,
		\begin{align*}
			& \langle \mathcal{L}_M \varphi_N^M, v \rangle_{M}  + \langle | \mu | \varphi_{N}^M,  v  \rangle_{M,  \partial D^-} 
			=   \langle s, v\rangle_{M} + \langle   | \mu  | g, v \rangle_{M, \partial D^-}, \quad v\in W_N^M,
			\\
			& \langle u,  v  \rangle_{M,   \partial D^-}  = \sum_{0\le i\le M} \omega_i  u(-\sign(\mu_i), \mu_i) v(-\sign(\mu_i), \mu_i),
	\end{align*}
	which offers us a relative  easy  approach to extend our fully spectral method to high dimensions.
	Additionally, we emphasize our inclination to explore a spectral element approximation within the framework of the discontinuous Galerkin method for the neutron transport equation in our forthcoming work, aiming for flexibility on more general domains and adaptability to discontinuity solutions. 
		
	\section*{Acknowledgement} We would like to thank Dr. Yupeng Ren, the first author of \cite{ren2022high}, to provide the numerical data of HWENO method. We also show our appreciation to the anonymous referees for their comments and suggestions to improve the presentation.
	
	\vspace{1em}
	\noindent\textbf{Declaration of competing interest}
	\vspace{1em}
	
	The authors declare the following financial interests/personal relationships which may be considered as potential competing interests: 
	
	Huiyuan Li reports financial support was provided by National Key R\&D Program of China (No. 2021YFB0300203). Huiyuan Li reports financial support was provided by Huawei Technologies Co Ltd.
	
	\vspace{1em}
	\noindent\textbf{Data availability}
	\vspace{1em}
	
	Data will be made available on request.
	
	\bibliographystyle{abbrv}
	\bibliography{reference1d}
\end{document}